\begin{document}
\title{Phase Transitions in Planted $k$-Factor Recovery}
 \author{Julia Gaudio\footnote{Northwestern University, Department of Industrial Engineering and Management Sciences; julia.gaudio@northwestern.edu} \and Colin Sandon \footnote{\'Ecole Polytechnique F\'ed\'erale de Lausanne; colin.sandon@epfl.ch} \and Jiaming Xu \footnote{Duke University, The Fuqua School of Business; jiaming.xu868@duke.edu} \and Dana Yang \footnote{Cornell University, Department of Statistics and Data Science; xy374@cornell.edu}}
\date{\today}

\maketitle

\begin{abstract}
This paper studies the problem of inferring a $k$-factor, specifically a spanning $k$-regular graph, planted within an \ER random graph $\calG(n,\lambda/n)$. We show that as the average degree $\lambda$ surpasses the critical threshold of $1/k$, the inference problem undergoes a transition from almost exact recovery  to partial recovery. Moreover, as $\lambda$ tends to infinity, the accuracy of recovery diminishes to zero. In addition, we characterize the recovery accuracy of a linear-time iterative pruning algorithm and show that it achieves almost exact recovery when $\lambda < 1/k$. A key component of our analysis is a two-step cycle construction: we first build trees through local neighborhood exploration and then connect them by sprinkling using reserved edges. Interestingly, for proving impossibility of almost exact recovery, we construct $\Theta(n)$ many small trees of size $\Theta(1)$, whereas for establishing the algorithmic lower bound, a single large tree of size $\Theta(\sqrt{n\log n})$ suffices.
\end{abstract}

\section{Introduction}
This paper studies the following planted subgraph recovery problem. 
We first generate a background \ER random graph $G_0 \sim \calG(n,p)$,
with $n$ vertices each pair of which are independently connected with probability $p$. A subset $S \subset [n]$ of size $m$  is selected uniformly at random. From a given family $\calH$ of labeled graphs with vertex set $S$, $H^*$ is chosen uniformly at random and embedded into $G_0$ by adding its edges. 
Letting $G$ denote the resulting graph, the goal is to recover the hidden subgraph $H^*$ based on the observation of $G.$ 
Depending on the choice of $\calH$, this framework encompasses a wide range of planted subgraph problems, including the model of planted clique~\cite{kuvcera1995expected}, tree~\cite{massoulie2019planting}, Hamiltonian cycle~\cite{bagaria2020hidden}, matching~\cite{moharrami2021planted},  nearest-neighbor graph~\cite{ding2020consistent}
and many others. 

Our study is motivated by the following fundamental question: in which regime in terms of $(n,m,p)$, can we recover the hidden subgraph $H^*$? Specifically, let $\hat H \equiv \hat H(G)$ denote an estimator of $H^*$ that is a set of edges on $K_{n}$, the complete graph on $n$ vertices.
The reconstruction error is 
\begin{equation}
\risk(H^*, \hat H) = \frac{\left| \Mplanted \symdiff \hat{H} \right|}{|\Mplanted|} ,
\label{eq:risk}
\end{equation}
where $\symdiff$ denotes the symmetric set difference. We say $\hat{H}$
achieves exact, almost exact, or partial recovery, if with high probability $\risk(H^*,\hat{H})$ is $0$, $o(1)$, or $1-\Omega(1)$, respectively.\footnote{Since the trivial estimator $\hat{H}=\emptyset$ has reconstruction error equal $1$, we require the partial recovery to achieve reconstruction error strictly bounded away from $1.$ Moreover,  due to the equivalence between Hamming error and mean-squared error (See Appendix~\ref{sec:equivalence}), the main results of this paper also hold under the mean-squared error metric.} 
Interestingly, for certain choices of $\calH$, the problem exhibits a peculiar ``all-or-nothing'' (AoN) phase transition in the asymptotic regime $n \to \infty$: the minimum reconstruction error, namely $\inf_{\hat{H}} \mathbb{E}_{G}[\ell(H^*,\hat{H}(G))],$  falls sharply from 1 to 0 at a certain critical threshold. For example, when $\calH$ consists of $m$-cliques and $p=1/2$, the problem reduces to the planted clique problem, which has AoN phase transition at threshold $m^*=2\log_2(n).$
As another example, when $\calH$ consists of $m$-paths and $p=\lambda/n$ for a constant $\lambda,$ then we arrive at the planted path problem,  which has AoN phase transition at threshold $m^*=\log(n)/\log(1/\lambda)$~\cite{massoulie2019planting}. 
More interestingly, the critical threshold coincides with the so-called first-moment threshold, at which the expected number of copies of subgraphs in $\calH$ contained in the background graph $G_0$ is approximately $1.$

The work of~\cite{Mossel2023} established the AoN phase transition for significantly broader classes of graph families  $\calH.$ Loosely speaking, the planted subgraph recovery model exhibits AoN at the first moment threshold when the hidden graph $H^*$ is either sufficiently dense
and balanced ($H^*$ has the maximal edge density among all its subgraphs) or small and strictly balanced. \footnote{\cite[Theorem 2.5]{Mossel2023} provides a more general necessary-and-sufficient characterization of AoN for sufficiently dense graphs, at the so-called ``generalized expectation threshold.''} 
Notably, AoN has also been established in various other high-dimensional inference problems, including sparse linear regression~\cite{reeves2021all}, sparse tensor PCA~\cite{niles2020all}, group testing~\cite{truong2020all,coja2022statistical}, graph alignment~\cite{wu2022settling}, and others. Despite these significant advancements, an interesting question remains elusive: \emph{what is the underlying reason for the onset of AoN?}

In this paper, we consider a complementary direction, namely the case of large, sparse graphs.
Specifically, we assume $\calH^*$ consists of all $k$-factors, the $k$-regular graphs spanning the vertex set $[n]$, where $k\ge 1$ is a fixed integer. This is known as the planted $k$-factor model~\cite{sicuro2021planted}. When $k=1$, this reduces to the planted matching problem~\cite{moharrami2021planted}. We assume $p=\lambda/n$, where $\lambda$ may scale with $n.$ Let $\mu_G$ denote the posterior distribution:
\begin{align}
\mu_G(H) \triangleq \prob{H^*=H \mid G } = \frac{1}{|\calH(G)|} \indc{H \in \calH(G)}, \quad \forall H \in \calH,  \label{eq:posterior_distribution}
\end{align}
which is simply a uniform distribution supported on the set of $k$-factors in $G$,
denoted by $\calH(G)$. 
Clearly, in the extreme case of $\lambda=0$, 
$\calH(G)=\{H^*\}$ is a singleton and $\mu_G$ is a delta measure supported on $H^*$.  As $\lambda$ increases, $\mu_G$ spreads over a larger subset of $k$-factors. 

A bit more precisely, the expected number of $k$-factors $H$ with $|H^*\triangle H|=2t$ is roughly
\begin{align}
&\binom{kn/2}{t} \cdot (2t-1)!! \cdot (\lambda/n)^t
\approx (k\lambda)^t \left(1-\frac{2}{kn}\right)\left(1-\frac{4}{kn}\right) \cdots \left(1-\frac{2(t-1)}{kn}\right),
\label{eq:mean_bound}
\end{align}
where $\binom{kn/2}{t}$ counts the ways to select $t$ edges from $H^*;$
$(2t-1)!!$ counts the number of pairings among the $2t$ endpoints of these edges\footnote{Note that some pairings may not generate valid $k$-factors;
thus, the LHS of \prettyref{eq:mean_bound} is only an upper bound as shown in~\prettyref{lmm:enumeration}. 
} ; and $(\lambda/n)^t$ is the probability that all $t$ pairs of endpoints are connected in $G$. (See~\prettyref{lmm:enumeration} for more details).

 This suggests that if $\lambda=o(1)$ there will be no $k$-factor other than the planted one in $G$; if $\lambda$ is a constant less than $1/k$ then there will be some constant number of $k$ factors that are all very close to the planted one; and that for $\lambda>1/k$ there will be a large number of $k$-factors in the graph, most of them differing from the planted one by $t^*(\lambda)$ edges, for some function $t^*(\lambda)\approx (1-\frac{1}{k\lambda})(kn/2)$ that approximately maximizes~\prettyref{eq:mean_bound}. 
 This behavior indicates a gradual decline in the reconstruction accuracy, and hence the absence of an AoN phase transition.
 While the intuitive argument above is straightforward, formalizing it rigorously is highly non-trivial and constitutes the main contribution of this paper. 
 
\begin{theorem}
Consider the planted $k$-factor model with $n$ nodes and $p=\lambda/n$. The following hold with probabilities tending to $1$ as $n \to \infty$:
\begin{itemize}
\item (Exact recovery) If $\lambda=o(1)$, then $\mu_G$ is a delta measure on $H^*$ and the minimum reconstruction error is $0$;
\item (Almost exact recovery) If $\Omega(1) \le \lambda \le 1/k$, then $1- o(1)$ of the probability mass of $\mu_G$ is supported on $k$-factors that differ from $H^*$ by $o(1)$ fraction of their edges and the minimum  reconstruction error is $o(1)$ ;
\item  (Partial recovery) If $\lambda>1/k$ is a constant, then 
$1-o(1)$ of the probability mass of $\mu_G$ is supported on $k$-factors which share $[\Omega(1),1-\Omega(1)]$ fraction of their edges with $H^*$, and the minimum reconstruction error is within $[\Omega(1), 1-\Omega(1)]$;
\item (Nothing) If $\lambda=\omega(1)$, then  $1-o(1)$ of the probability mass of $\mu_G$ is supported on $k$-factors which share $o(1)$ fraction of their edges with $H^*$, and the minimum reconstruction error is $1-o(1)$. 
\end{itemize}
\end{theorem}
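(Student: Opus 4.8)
The plan is to route all four bullets through the posterior $\mu_G$, which by~\prettyref{eq:posterior_distribution} is uniform on $\calH(G)$. Write $N_t$ for the number of $k$-factors of $G$ at symmetric difference $2t$ from $H^*$ (so $N_0=1$), $Z:=|\calH(G)|=\sum_t N_t$, and $a_e:=n_e/Z$ where $n_e:=|\{H\in\calH(G):e\in H\}|$, so $a_e=\mathbb{P}(e\in H^*\mid G)$. Then the posterior mass at symmetric differences in a set $T$ is $Z^{-1}\sum_{t\in T}N_t$; and since the edgewise majority vote is Bayes-optimal for the normalized Hamming loss, $\ell^*(G):=\min_{\hat H}\mathbb{E}[\ell(H^*,\hat H)\mid G]=\tfrac{2}{kn}\sum_e\min\{a_e,1-a_e\}=1-\tfrac{2}{kn}\sum_{e:\,a_e>1/2}(2a_e-1)$, using $\sum_e a_e=kn/2$. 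The elementary inequalities $a(1-a)\le\min\{a,1-a\}\le 2a(1-a)$ give, deterministically in $G$, $\tfrac12\,\mathbb{E}_{\mu_G^{\otimes2}}[\ell(H_1,H_2)]\le\ell^*(G)\le\mathbb{E}_{\mu_G^{\otimes2}}[\ell(H_1,H_2)]$ with $\mathbb{E}_{\mu_G^{\otimes2}}[\ell(H_1,H_2)]=\tfrac4{kn}\sum_e a_e(1-a_e)=2-\tfrac4{kn}\sum_e a_e^2$. So it suffices to (i) locate the bulk of $(N_t)$, and (ii) for the sharp bounds $\le1-\Omega(1)$ and $=1-o(1)$, control the extreme marginals: a positive density of $a_e=1$ the correct way pushes $\ell^*(G)$ below $1$, while all $a_e=o(1)$ pushes it to $1-o(1)$.

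\textbf{Step 1: first moment and the rate function.} By~\prettyref{lmm:enumeration} (the rigorous form of~\prettyref{eq:mean_bound}), $\mathbb{E}[N_t]\le(k\lambda)^t\prod_{j=1}^{t-1}\bigl(1-\tfrac{2j}{kn}\bigr)$, which with $x:=2t/(kn)\in[0,1]$ is at most $\exp\bigl(\tfrac{kn}{2}f(x)\bigr)$, where $f(x)=-(1-x)\log(1-x)+x\log(k\lambda)-x$ is strictly concave, $f(0)=0$, with unique maximizer $x^*=1-1/(k\lambda)$ (interior iff $k\lambda>1$) and $f(x^*)=\log(k\lambda)-1+1/(k\lambda)$. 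Markov's inequality on $\sum_{t:\,2t/(kn)\in I}N_t$ over each of $O(n)$ short windows $I$, plus a union bound, gives w.h.p.\ $N_t\le\exp\bigl(\tfrac{kn}{2}f(2t/(kn))+o(n)\bigr)$ for all $t$ simultaneously. Consequences: if $\lambda=o(1)$ then $\mathbb{E}[\sum_{t\ge1}N_t]\le\tfrac{k\lambda}{1-k\lambda}=o(1)$, so w.h.p.\ $Z=1$, $\mu_G=\delta_{H^*}$ and $\ell^*(G)=0$ (bullet 1); if $\lambda\le1/k$ then $\mathbb{E}[\sum_{t\ge\epsilon kn/2}N_t]=e^{-\Omega(n)}$ for each fixed $\epsilon>0$, so w.h.p.\ every $k$-factor of $G$ agrees with $H^*$ on a $1-o(1)$ fraction of its edges, hence all $a_e$ are within $o(1)$ of $\{0,1\}$ and $\ell^*(G)=o(1)$ (bullet 2). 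When $k\lambda>1$, strict concavity yields, for constant $\lambda$, a fixed gap $f(x^*)-\max_{x\le x^*-\epsilon}f(x)>0$, and for $\lambda=\omega(1)$, a gap $f(x^*)-f(1-\delta_n)=\omega(n^{-1}\log n)$ for a suitable $\delta_n\downarrow0$ with $\delta_n>1/(k\lambda)$; so $\sum_{t:\,2t/(kn)\le x^*-\epsilon}N_t$, resp.\ $\sum_{t:\,2t/(kn)\le1-\delta_n}N_t$, is w.h.p.\ exponentially smaller than $\exp(\tfrac{kn}{2}f(x^*))$ --- \emph{provided} a matching lower bound on $Z$ is available.

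\textbf{Step 2: the matching lower bound --- the main obstacle.} The crux, and where I expect the real difficulty, is to show $Z\ge\exp\bigl(\tfrac{kn}{2}f(x^*)-o(n)\bigr)$ w.h.p.\ when $k\lambda>1$. A monolithic second moment on $N_{t^*}$, $t^*=\lfloor x^*kn/2\rfloor$, would require $\mathbb{E}[N_{t^*}^2]\le(1+o(1))\mathbb{E}[N_{t^*}]^2$, i.e.\ that a typical ordered pair of $k$-factors at distance $2t^*$ from $H^*$ has nearly independent edge sets and symmetric differences with $H^*$ sharing few cycles --- the correlated pairs are the delicate part. Instead I would use the two-step cycle construction of the abstract: expose $G\setminus H^*$ in two independent phases; in phase~1 grow by bounded-depth neighbourhood exploration $\Theta(n)$ vertex-disjoint trees of size $\Theta(1)$, each arranged so that one more ``reserved'' edge closes it into a short $H^*$-alternating cycle; in phase~2 (sprinkling) reveal the reserved edges and show a constant fraction of the trees close, yielding $\Theta(n)$ vertex-disjoint $H^*$-alternating cycles of length $\Theta(1)$ that may be toggled independently. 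This already gives $Z=e^{\Theta(n)}$, and refining the count (matching the cycle lengths to $x^*$, Paley--Zygmund on the number of toggle-patterns landing at distance $2t^*$) upgrades it to $N_{t^*}\ge\exp\bigl(\tfrac{kn}{2}f(x^*)-o(n)\bigr)$. The same local analysis, run from every vertex, also delivers the marginal facts: when $\lambda$ is constant, $\Theta(n)$ vertices --- in particular the $\Theta(n)$ degree-$k$ vertices --- admit no closeable alternating cycle, so their incident $H^*$-edges have $a_e=1$, giving $\ell^*(G)\le 1-\Omega(1)$; a companion second-moment estimate shows two independent posterior samples are pairwise at distance $\Omega(n)$ w.h.p., so $\mathbb{E}_{\mu_G^{\otimes2}}[\ell]=\Omega(1)$ and $\ell^*(G)\ge\Omega(1)$. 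For $\lambda=\omega(1)$ one needs all marginals to vanish: bounding $\mathbb{E}_G[a_e^2]=\mathbb{P}(e\in H^*)\cdot\mathbb{E}[a_e\mid e\in H^*]$ with the fact that a fresh posterior sample avoids a given planted edge with probability $1-o(1)$ (each planted edge lies on $\omega(1)$ short alternating cycles), one gets $\sum_e\mathbb{E}_G[a_e^2]=O(n/\lambda)=o(n)$, hence $\sum_e a_e^2=o(n)$ w.h.p., i.e.\ $\mathbb{E}_{\mu_G^{\otimes2}}[\ell]=2-o(1)$ and $\ell^*(G)=1-o(1)$.

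\textbf{Step 3: assembly.} Bullets 1--2 are done in Step~1. For $k\lambda>1$, Steps~1 and~2 together place $1-o(1)$ of the posterior mass on $k$-factors whose fractional difference from $H^*$ --- i.e.\ $1$ minus the fraction of shared edges --- is $(1+o(1))x^*$: if $\lambda>1/k$ is constant this is a constant strictly in $(0,1)$, and the marginal facts give $\ell^*(G)\in[\Omega(1),1-\Omega(1)]$ (bullet 3); if $\lambda=\omega(1)$ then $x^*=1-1/(k\lambda)\to1$, so the posterior concentrates on $k$-factors sharing an $o(1)$ fraction of $H^*$'s edges, and the vanishing-marginal bound gives $\ell^*(G)=1-o(1)$ (bullet 4). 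The mean-squared-error versions follow from the equivalence in~\prettyref{sec:equivalence}.
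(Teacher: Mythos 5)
Your Steps 0--1 are fine and broadly parallel the paper's first-moment analysis (\prettyref{lmm:enumeration}, \prettyref{thm:exact_recovery}, \prettyref{thm:almost_perfect}), and your use of degree-$k$ vertices for the upper side of the ``something'' phase matches \prettyref{thm:partial}. The genuine gap is in Step~2, and it is exactly where you flag ``the main obstacle.'' Your construction asks each of the $\Theta(n)$ constant-size trees to be closed into a \emph{short} alternating cycle by a single reserved edge, and claims a constant fraction of them close, yielding $\Theta(n)$ vertex-disjoint toggleable cycles of length $\Theta(1)$. This fails quantitatively: for constant $\lambda$ the probability that a fixed constant-size tree attaches to both endpoints of some reserved edge is $O(\ell^2\cdot \gamma n\cdot(\lambda/n)^2)=O(1/n)$, so only $O(1)$ trees close in expectation --- consistent with the direct count that the expected number of alternating cycles of any fixed constant length in $G$ is $\Theta(1)$, not $\Theta(n)$. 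This is precisely why the paper does \emph{not} close the small trees individually: in the proof of \prettyref{lmm:Mbad} the $\Theta(n)$ constant-size trees are chained together by a five-edge sprinkling into an auxiliary bipartite graph with a perfect matching plus independent blue edges, and \prettyref{lemma:bipartite-ER} then produces $e^{\Omega(n)}$ alternating cycles of length $\Omega(n)$ (the closing of a \emph{single} tree by a reserved edge is instead the mechanism for the algorithmic lower bound in \prettyref{thm:size}, and there the tree must have size $\Theta(\sqrt{n\log n})$ for the closing probability to be $1-o(1)$). Your subsequent ``refinement'' to $N_{t^*}\ge\exp(\tfrac{kn}{2}f(x^*)-o(n))$ is also unsupported: toggling disjoint short cycles can never reach that entropy, and the paper never proves (nor needs) such a sharp lower bound --- bullet~3 only requires the comparison $|\Mgood|\ll|\Mbad|$ for a small constant $\delta$, as in \prettyref{thm:impossibility}, combined with the degree-$k$ upper bound.

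For bullet~4 there is a second gap. Your argument needs all posterior marginals on planted edges to vanish, and you justify $\expect{a_e\mid e\in H^*}=o(1)$ by ``each planted edge lies on $\omega(1)$ short alternating cycles,'' which is essentially the statement to be proved, not a proof: lying on many alternating cycles does not by itself bound the ratio of $k$-factors containing $e$ to those avoiding it. The paper instead lower-bounds the \emph{total} number of $k$-factors $Z(G)$ by a change-of-measure argument ($\mathbb{P}(G)/\mathbb{Q}(G)=Z(G)/\mathbb{E}_{\mathbb{Q}}[Z]$, \prettyref{lemma:nothing-helper1}, after \cite[Lemma 3.9]{Mossel2023}), and compares it with the first-moment count of $k$-factors sharing $\Omega(n)$ edges with $H^*$ (\prettyref{lemma:nothing-helper2}); no constructive cycle argument and no rate-function localization at $x^*$ are needed there. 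If you want to salvage your outline, replace the ``close each small tree'' step by the paper's five-edge chaining (or some other device producing exponentially many \emph{long} alternating cycles), and replace the marginal claim in the $\lambda=\omega(1)$ case by a genuine lower bound on $Z(G)$ such as the change-of-measure bound.
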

In particular, the problem transitions from almost exact recovery to partial recovery at the sharp threshold of $\lambda=1/k.$ This not only recovers the known result for $k=1$~\cite{Ding2023} but also resolves the conjecture posed in~\cite{sicuro2021planted} for $k \ge 2$ in the positive. 
In comparison, it is well-known that the first-moment threshold for $k$-factors in the background graph $G_0$ is $\lambda_{\rm 1M}=e(k!)^{2/k} /k$ (cf.~\cite[Corollary 2.17]{bollobas2001random}) and the critical threshold for the existence of $k$-factors in $G_0$ is $\lambda_c=\log n +(k-1)\log\log n + \omega(1). $ 
Intuitively, the almost exact recovery threshold is lower than the first-moment threshold because a single planted $k$-factor, together with the edges in the background Erd\H{o}s--R\'enyi graph $G_0$, can generate many spurious $k$-factors—even when the expected number of $k$-factors in $G_0$ is small. See Fig.~\ref{fig:phase_diagram} for a graphical illustration of the different thresholds. 
\begin{figure}[t]
\begin{center}
\begin{tikzpicture}[scale=0.8]
    \draw[->] (0,0) -- (16,0) node[right] {$\lambda$};
    \draw[black, thick] (2,1.5) circle (1);
    \draw[red, thick,fill] (2,1.5) circle (0.05) node[above] {$H^*$};
    \draw[black, thick] (6,1.5) circle (1);
        \fill[black!30, opacity=0.5] (6,1.5) circle (0.2);
    \draw[red, thick,fill] (6,1.5) circle (0.05) node[above] {$H^*$};
    \draw[black, thick] (10,1.5) circle (1);
    \fill[black!30,opacity=0.5,even odd rule, ] (10,1.5) circle (0.6) (10,1.5) circle (0.4);
    \draw[red, thick,fill] (10,1.5) circle (0.05) node[above] {$H^*$};
     \draw[black, thick] (14,1.5) circle (1);
    \fill[black!30,opacity=0.5,even odd rule, ] (14,1.5) circle (1) (14,1.5) circle (0.8);
    \draw[red, thick,fill] (14,1.5) circle (0.05) node[above] {$H^*$};

    \node[below] at (0,0) {0};
    \node[below] at (4,0) {$o(1)$};
    \draw[thick] (4,0) -- (4,0.1);
    \node[below] at (8,0) {$1/k$};
     \draw[thick,dashed] (8,0) -- (8,3);
    \node[below] at (12,0) {$\omega(1)$};
    \node[below] at (10,0){$\lambda_{\rm 1M}$};
    \draw[thick] (10,0) -- (10,0.1);
     \draw[thick,dashed] (12,0) -- (12,3);
     \node[below] at (14,0){$\lambda_{c}$};
     \draw[thick] (14,0) -- (14,0.1);

 \node[below] at (2,-0.5) {Exact};
 \node[below] at (6,-0.5) {Almost exact};
  \node[below] at (10,-0.5) {Partial};
\end{tikzpicture}
\caption{Cartoon plot of the phase diagram with varying $\lambda$: The circle represents the space of all possible $k$-factors $H$ centered at the hidden one $H^*$ according to the Hamming distance $|H\Delta H^*|$; the grey area contains almost all $k$ factors in the observed graph $G.$}
\label{fig:phase_diagram}
\end{center}
\end{figure}
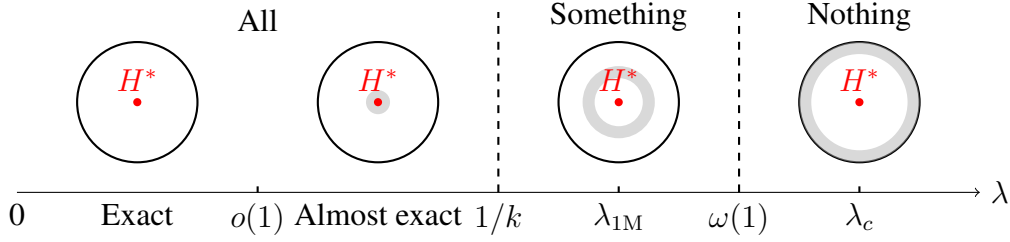

Complementing the study of the information-theoretic thresholds, we also investigate the algorithmic thresholds. We show that the three recovery thresholds can be achieved efficiently via an iterative pruning algorithm proposed in~\cite{sicuro2021planted}.
\begin{theorem}
There exists a linear-time $O(|E(G)|)$ iterative pruning algorithm that achieves exact recovery, almost exact recovery, and partial recovery, when $\lambda=o(1),$ 
$\lambda \le 1/k$,
and $\lambda=O(1),$ respectively. 
\end{theorem}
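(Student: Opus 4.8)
The plan is to reduce the whole theorem to the combinatorics of the edges that the iterative pruning algorithm of~\cite{sicuro2021planted} fails to classify. Recall the algorithm maintains a residual degree $r_v$ at each vertex, initialized to $k$, and a label \emph{in}/\emph{out}/\emph{undetermined} on each edge; it repeatedly (i) labels \emph{in} all undetermined edges at any $v$ whose undetermined-degree equals $r_v$, decrementing $r$ along newly fixed edges, and (ii) labels \emph{out} all undetermined edges at any $v$ with $r_v=0$, until no move applies. With a queue of active vertices this terminates in $O(|E(G)|)$ time, and a one-line induction shows it never errs: every edge labeled \emph{in} lies in $H^*$ and every edge labeled \emph{out} does not (the $O(1)$ edges of $H^*\cap G_0$ being negligible throughout). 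Taking $\hat H$ to be the \emph{in}-labeled edges then gives $\risk(H^*,\hat H)\le 2\,|\mathcal K\cap H^*|/(kn)$, where $\mathcal K$ is the subgraph of still-undetermined edges at termination (the ``stuck core''); so exact, almost-exact, and partial recovery reduce to $\mathcal K=\emptyset$, $|\mathcal K\cap H^*|=o(n)$, and $|\mathcal K\cap H^*|\le(1-\Omega(1))\,kn/2$, respectively.

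The first ingredient is a structural dichotomy for $\mathcal K$: for $v\in V(\mathcal K)$, every $H^*$-edge at $v$ outside $\mathcal K$ has been labeled \emph{in}, so $r_v=h_v$, where $h_v$ and $g_v$ count the undetermined $H^*$- and $G_0$-edges at $v$; since no move applies at $v$ we need $1\le h_v\le h_v+g_v-1$, i.e.\ $h_v\ge1$ \emph{and} $g_v\ge1$. Thus $\mathcal K\cap H^*$ and $\mathcal K\cap G_0$ each have minimum degree $\ge1$ on $V(\mathcal K)$, so every component of $\mathcal K$ contains a cycle alternating between $H^*$- and $G_0$-edges, hence has $\ge4$ vertices. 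For $\lambda=o(1)$ I would bound $\expect{|V(\mathcal K)|}$ by a first-moment sum over candidate structures $B$ with $\nu$ vertices, $h\ge\nu/2$ prospective $H^*$-edges and $g\ge\nu/2$ prospective $G_0$-edges: using \prettyref{lmm:enumeration} to bound $\prob{\text{the }h\text{ pairs lie in }H^*}\lesssim (k/n)^h$ and $\prob{\text{the }g\text{ pairs lie in }G_0}=(\lambda/n)^g$, the expectation is $\lesssim\sum_{\nu\ge4}(Ck\sqrt\lambda)^\nu=O(\lambda^2)=o(1)$. Hence whp $\mathcal K=\emptyset$ and the algorithm recovers $H^*$ exactly.

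For $\Omega(1)\le\lambda\le1/k$ I would pass to the local weak limit. Around a uniformly random vertex, $(G,H^*)$ converges locally to the colored unimodular Galton--Watson tree $\mathcal T$ in which every vertex carries a $k$-regular skeleton of $H^*$-edges together with an independent $\Pois(\lambda)$ family of further $G_0$-edges, each spanning an independent copy of the same structure. Since $t$-round pruning depends only on the radius-$O(t)$ ball and the undetermined set is monotone in $t$, $\limsup_n 2|\mathcal K\cap H^*|/(kn)\le\prob{\text{the root }H^*\text{-edge is undetermined after }t\text{ rounds on }\mathcal T}$ for every fixed $t$. Unwinding the recursion on $\mathcal T$, a root $H^*$-edge $\{u,v\}$ is fixed from $u$'s side exactly when every $G_0$-child of $u$ becomes \emph{saturated} ($r=0$), which requires all $k$ of that child's $H^*$-edges to be fixed from below; this yields the distributional fixed point $\rho=e^{-k\lambda(1-\rho)}$ for the saturation probability $\rho$, equivalently $q:=1-\rho=1-e^{-k\lambda q}$, the survival probability of a $\Pois(k\lambda)$ branching process. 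When $k\lambda\le1$ the only solution is $q=0$, the finite-round iterates collapse to it, and the limiting undetermined fraction $(1-e^{-\lambda q})^2$ is $0$; hence $|\mathcal K\cap H^*|=o(n)$ and $\risk(H^*,\hat H)=o(1)$, establishing almost-exact recovery on all of $\lambda\le1/k$. When $\lambda>1/k$ one has $q>0$, so this limiting fraction lies strictly in $(0,1)$; but for \emph{partial} recovery at any constant $\lambda$ it suffices that the very first round fixes every $H^*$-edge with an endpoint of $G_0$-degree $0$, which occurs for a uniformly random $H^*$-edge with probability $1-(1-e^{-\lambda})^2-o(1)=\Omega(1)$, so a bounded-difference bound gives $|\hat H\cap H^*|=\Omega(n)$ whp and $\risk(H^*,\hat H)\le1-\Omega(1)$.

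The step I expect to be the main obstacle is making the local-weak-convergence argument rigorous: justifying the interchange of the $n\to\infty$ and $t\to\infty$ limits (monotonicity supplies the $\limsup$ bound we need, but identifying the limit with the fixed point, and verifying that the finite-round iterates still collapse to $q=0$ at the critical value $\lambda=1/k$ --- where the collapse is only polynomially fast --- requires care), and confirming that the pruning dynamics on $\mathcal T$ are governed precisely by $\rho=e^{-k\lambda(1-\rho)}$ and not by some stronger two-sided propagation. The structural dichotomy for $\mathcal K$ and the first-moment estimate for $\lambda=o(1)$ should be routine once the ``$h_v\ge1$ and $g_v\ge1$'' alternative is established.
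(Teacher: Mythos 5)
Your plan is, for the most part, the paper's own: the pruning error equals the fraction of planted edges left in the stuck core, the core fraction is controlled by coupling the two-sided alternating neighborhood of a planted edge to a branching process whose two-layer offspring is $k\cdot\Binom(\approx n,\lambda/n)\approx k\cdot\Pois(\lambda)$ (the paper's \prettyref{lmm:coupling}), extinction of either side forces the edge to be identified (\prettyref{lmm:pruning}), and the $t\to\infty$ limit is taken after $n\to\infty$ exactly as your monotonicity remark does; this gives \prettyref{thm:size}, and partial recovery for $\lambda=O(1)$ is the paper's \prettyref{thm:partial}, proved from isolated $G_0$-vertices just as in your last step. One correction within this part: the fixed point you write, $\rho=e^{-k\lambda(1-\rho)}$, is not what your own verbal recursion produces. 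A blue child must have all $k$ of its planted edges fixed from below, so its saturation probability is $\rho^k$ and the recursion is $\rho=\exp(-\lambda(1-\rho^k))$, i.e.\ the paper's \prettyref{eq:rho} (offspring $k\cdot\Pois(\lambda)$, not $\Pois(k\lambda)$). This is harmless for the theorem as stated, since both processes have mean $k\lambda$ and die out almost surely exactly when $k\lambda\le 1$, which is all you use; but your formula for the limiting undetermined fraction is off, the correct limit being $(1-\rho)^2$.

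For $\lambda=o(1)$ you take a genuinely different route from the paper, which shows that a nonempty core forces an ``almost'' alternating cycle (alternating except possibly at one junction) and first-moments that family. Two cautions about your version. First, your structural dichotomy ($h_v\ge1$ and $g_v\ge1$ on the core) is correct, but it only yields an alternating closed walk; a simple fully alternating cycle need not exist, which is precisely why the paper works with almost alternating cycles --- your ``contains an alternating cycle'' claim should be weakened (you do not actually use it beyond the harmless $\nu\ge 4$ bound). Second, your first-moment bound needs a more careful accounting than ``$\prob{\text{the $h$ pairs lie in }H^*}\lesssim(k/n)^h$ via \prettyref{lmm:enumeration}'': that lemma counts $k$-factors at a given Hamming distance and is not the right tool here, and in any case the paper conditions on $H^*$, so red edges carry no probability at all. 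The bound still goes through, but the step that makes it work is that each red component of a candidate structure has at least two vertices while the blue edges must cover all $\nu$ vertices, so the number of red components is at most $\nu/2\le g$; hence every factor of $n$ incurred by placing a red component of $H^*$ is paid for by a factor $\lambda/n$ from a blue edge, and one obtains a bound of the form $\sum_{\nu\ge4}(C_k\sqrt{\lambda})^{\nu}=o(1)$. With that fix, and with the local-coupling step carried out as in \prettyref{lmm:coupling} (the part you correctly identify as the real work), your proposal proves the theorem.
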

 Note that for $k\ge 2$, the set of $k$-factors may contain different isomorphism classes. This slightly differs from the setup in~\cite{Mossel2023} which assumes $\calH$ contains only a single isomorphism class. For example, a $2$-factor corresponds to a disjoint union of cycles with total length $n$, and an isomorphism class corresponds to a cycle length configuration. If we restrict $\calH$ to contain only Hamiltonian cycles, we arrive at the planted Hamiltonian cycle~\cite{bagaria2020hidden}. Our results continue to hold for the planted Hamiltonian cycle model via a reduction argument (See~\prettyref{sec:Hamiltonian} for details).

Finally, we briefly comment on the closely related detection problem.  If we test the planted $k$-factor model against the \ER random graph $\calG(n,\lambda/n)$, we can easily distinguish the hypotheses by counting the total number of edges. If we instead test against $\calG(n, \lambda'/n)$, where $\lambda'$ is chosen so that the average number of edges matches between the planted and null model:
$
\frac{kn}{2} + \left( \binom{n}{2} - \frac{kn}{2} \right) \frac{\lambda}{n}
= \binom{n}{2} \frac{\lambda'}{n},
$
then we can still test the hypothesis based on the minimum degree or the existence of a $k$-factor. The test based on the existence of a $k$-factor succeeds as long as $\lambda \le \log n + (k-1) \log \log n - \omega(1)$, as the null model does not contain any $k$-factors with high probability. In summary, we see that detection is much easier than recovery for the planted $k$-factor problem.

The rest of the paper is organized as follows. In \prettyref{sec:main}, we formally state our main results. \prettyref{sec:proof_ideas} outlines the key proof ideas.  \prettyref{sec:conclusions} concludes the paper with open questions. Formal proofs and additional details are deferred to the appendices.

 \section{Main Results}\label{sec:main}

\subsection{Information-theoretic Limits}
The following result shows that the exact recovery of $H^*$ is information-theoretically possible if and only if $\lambda=o(1). $

 \begin{theorem}[Exact recovery, positive and negative ]\label{thm:exact_recovery}
Consider the planted $k$-factor model conditioning on $H^*$. If $\lambda=o(1)$, then $H^*$ is the unique $k$-factor in graph $G$ with probability $1-o(1)$.  Conversely, if $\lambda=\Omega(1)$, then $G$ contains a $k$-factor $H \neq H^*$ with probability $\Omega(1).$ 
\end{theorem}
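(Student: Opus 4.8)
Let $N$ be the number of $k$-factors $H\subseteq G$ with $H\neq H^*$; the plan is to show $\mathbb{E}[N]=o(1)$ and invoke Markov. Since $G=G_0\cup H^*$ with $G_0\sim\calG(n,\lambda/n)$ independent, a fixed $k$-factor $H$ with $|H\symdiff H^*|=2t$ satisfies $\mathbb{P}[H\subseteq G]=(\lambda/n)^t$, because exactly the $t$ non-planted edges of $H\setminus H^*$ must land in $G_0$. The number of $k$-factors at distance $2t$ from $H^*$ is at most $\binom{kn/2}{t}(2t-1)!!$ (delete $t$ edges of $H^*$, then re-pair the $2t$ exposed half-edges --- the overcount behind \prettyref{lmm:enumeration}). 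Using $(2t-1)!!\le 2^t t!$ and $\binom{kn/2}{t}\le (kn/2)^t/t!$,
\[
\mathbb{E}[N]\ \le\ \sum_{t=1}^{kn/2}\binom{kn/2}{t}(2t-1)!!\,(\lambda/n)^t\ \le\ \sum_{t\ge 1}(k\lambda)^t\ \le\ \frac{k\lambda}{1-k\lambda}\ =\ o(1),
\]
valid once $k\lambda<1/2$, which holds eventually since $\lambda=o(1)$. Hence $\mathbb{P}[N\ge 1]=o(1)$.

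\textbf{Negative direction ($\lambda=\Omega(1)$): second moment.} By a monotone coupling of the background edges (adding edges to $G_0$ only enlarges $\calH(G)$), it suffices to take $\lambda=c_0$ for an arbitrarily small constant $c_0>0$. We count ``augmenting swaps'': for vertex-disjoint edges $\{a,b\},\{c,d\}\in H^*$ and a rewiring to two new edges $\{a,c\},\{b,d\}$ (or $\{a,d\},\{b,c\}$) that both lie outside $H^*$, the graph $H=(H^*\setminus\{\{a,b\},\{c,d\}\})\cup\{\{a,c\},\{b,d\}\}$ is again a spanning $k$-regular graph, and $H\subseteq G$ iff both new (hence non-planted) edges appear in $G_0$. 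Let $N'$ count realized swaps; the map sending a configuration to its $H$ is injective, so $N'$ is a count of distinct $k$-factors $\neq H^*$ in $G$. Bounded degree of $H^*$ makes the constraints (distinct endpoints, new edges $\notin H^*$) cost only a constant factor, so there are $\Theta(n^2)$ admissible configurations, each realized with probability $(c_0/n)^2$, giving $\mathbb{E}[N']=\Theta(c_0^2)$. For the second moment, two configurations with disjoint new-edge slots give a pure product term; those sharing exactly one new edge number $O(n^2)$ and contribute $O((c_0/n)^3)$ each, and distinct configurations sharing both new edges number $O(n^2)$ and contribute $O((c_0/n)^2)$ each, so $\mathbb{E}[N'^2]\le (\mathbb{E}[N'])^2+O(c_0^2)+o(1)=O(c_0^2)$. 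The second-moment inequality $\mathbb{P}[N'\ge 1]\ge (\mathbb{E}[N'])^2/\mathbb{E}[N'^2]$ then yields $\mathbb{P}[N'\ge 1]=\Omega(1)$.

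\textbf{Main obstacle.} The positive direction is routine once the enumeration bound is in place. The subtle part is the negative direction: a single augmenting swap has probability only $\Theta(n^{-2})$, so any independent subfamily of size $O(n)$ (e.g.\ using vertex-disjoint swaps) has total success probability $O(n^{-1})\to 0$; one genuinely needs the full \emph{correlated} family of $\Theta(n^2)$ swaps, and the work is in organizing the second-moment sum so that each overlap class remains $O((\mathbb{E}N')^2)$ uniformly in $n$. A minor additional point is checking, for every $k\ge 1$, that the rewired graph is a simple $k$-factor, which costs only the constant factor noted above.
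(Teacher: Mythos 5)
Your proof is correct, and the positive direction is essentially the paper's argument verbatim (first moment over Hamming spheres around $H^*$ using the same enumeration bound). The negative direction, however, takes a genuinely different route. The paper also uses degree-$4$ swaps, but it sidesteps the second moment entirely: it first extracts $\lceil n/(4k)\rceil$ planted edges $(v_i,v_i')$ that are pairwise vertex-disjoint \emph{and} have no planted edge between their endpoint sets, and then observes that the $\Theta(n^2)$ swap events $\mathcal{E}_{ij}=\{(v_i,v_j)\in G,\ (v_i',v_j')\in G\}$ are \emph{mutually independent}, because independence only requires the non-planted edge slots $\{(v_i,v_j),(v_i',v_j')\}$ to be disjoint across pairs $(i,j)$ -- not the swaps to be vertex-disjoint. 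The failure probability is then computed directly as $(1-\lambda^2/n^2)^{\Theta(n^2)}=1-\Omega(1)$, which works for every $\lambda=\Omega(1)$ with no need for your monotone-coupling reduction to small constant $\lambda$. This also shows that the claim in your ``main obstacle'' paragraph -- that any independent subfamily of swaps has size $O(n)$, so a correlated second-moment analysis is genuinely needed -- is not quite right; a $\Theta(n^2)$-size independent family does exist once the planted edges are chosen with the mild separation above. That said, your Paley--Zygmund computation is organized correctly (the diagonal and both overlap classes are $O(c_0^2)$ against $(\mathbb{E}N')^2=\Theta(c_0^4)$), so your route is valid; what the paper's construction buys is a shorter argument, a bound that improves as $\lambda$ grows, and no overlap bookkeeping, while yours buys robustness in that it needs no careful pre-selection of the planted edge family.
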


The following two theorems together show that the almost exact recovery of $H^*$ is possible if and only if $\lambda \le 1/k.$

\begin{theorem}[Almost exact recovery, positive]\label{thm:almost_perfect}
Suppose that
\begin{align}
    \lambda k \le 1+\epsilon \label{eq:possibility}
\end{align}
for some $\epsilon \in [0,1)$.  Let 
\begin{align}
\beta =\max\left\{4 \log(1+\epsilon), \sqrt{\frac{8\log n}{n}}\right\} \label{eq:beta_def}
\end{align}
and 
$\widehat{H}$ denote an estimator that outputs a $k$-factor in $G$.
Then 
$$
\mathbb{P}\left\{\ell\left(\widehat{H},H^*\right) \ge 2\beta  \mid H^* \right\}
\le e^{1/2} \beta, 
$$
and moreover,
$
\expect{\risk( \widehat{H}, H^*) \mid H^*}\le 
6\beta.$
\end{theorem}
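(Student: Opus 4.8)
The plan is a pure first-moment argument: with high probability \emph{every} $k$-factor of $G$ is close to $H^*$, so any $\hat H$ returning a $k$-factor of $G$ is automatically close. First I would note that $\hat H$ and $H^*$ are both $k$-factors, hence each has exactly $kn/2$ edges, so $|H^*\triangle\hat H|=2|H^*\setminus\hat H|$ is even, say equal to $2t$, and $\ell(\hat H,H^*)=2t/(kn/2)=4t/(kn)$. Thus $\ell(\hat H,H^*)\ge 2\beta$ forces $t\ge T:=\lceil\beta kn/2\rceil$, and a union bound over $t$ together with Markov's inequality yields
\[
\prob{\ell(\hat H,H^*)\ge 2\beta \mid H^*}\ \le\ \sum_{t\ge T}\expect{Z_t \mid H^*},
\]
where $Z_t$ is the number of $k$-factors $H\subseteq G$ with $|H^*\triangle H|=2t$. (If $\beta>1$ the left side vanishes since $\ell\le 2$, so I may assume $\beta\le 1$, which also keeps the sum over the finite range $T\le t\le kn/2$.)

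Next I would bound the first moment. Conditioned on $H^*$, a fixed $k$-factor $H$ at Hamming distance $2t$ lies in $G=G_0\cup H^*$ iff its $t$ edges outside $H^*$ all lie in the independent background $G_0$, which has probability $(\lambda/n)^t$; and the number $N_t$ of $k$-factors at Hamming distance $2t$ is at most $\binom{kn/2}{t}(2t-1)!!$, since one chooses the $t$ deleted edges of $H^*$ and then observes that $H\setminus H^*$ has the same degree sequence as $H^*\setminus H$ (degree gains equal degree losses at every vertex), so a configuration-model bound leaves at most $(2t-1)!!$ choices for $H\setminus H^*$. Invoking the enumeration estimate~\prettyref{eq:mean_bound} (rigorously, \prettyref{lmm:enumeration}) and $\lambda k\le 1+\epsilon$,
\[
\expect{Z_t \mid H^*}\ \le\ (k\lambda)^t\prod_{j=1}^{t-1}\Bigl(1-\tfrac{2j}{kn}\Bigr)\ \le\ (1+\epsilon)^t\, e^{-t(t-1)/(kn)}\ =:\ e^{g(t)},
\]
with $g(t):=t\log(1+\epsilon)-t(t-1)/(kn)$.

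Then I would sum the series, exploiting the calibration of $\beta$ in~\prettyref{eq:beta_def}. Because $\beta\ge 4\log(1+\epsilon)$, for every $t\ge T$ the increment is $g(t)-g(t+1)=\tfrac{2t}{kn}-\log(1+\epsilon)\ge\beta-\tfrac\beta4=\tfrac{3\beta}{4}>0$, so $g$ is decreasing on $[T,\infty)$ and $\sum_{t\ge T}e^{g(t)}\le e^{g(T)}/(1-e^{-3\beta/4})$. Substituting $T\in[\beta kn/2,\beta kn/2+1]$ and $\log(1+\epsilon)\le\beta/4$ into $g$ gives $g(T)\le\tfrac{\beta^2 kn}{8}-\tfrac{\beta^2 kn}{4}+O(\beta)=-\tfrac{\beta^2 kn}{8}+O(\beta)$, and since $\beta\ge\sqrt{8\log n/n}$ one has $\tfrac{\beta^2 kn}{8}\ge k\log n\ge\log n$, hence $e^{g(T)}=O(1/n)$. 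Combined with $1-e^{-3\beta/4}=\Omega(\beta)$ (for $\beta\le 1$), a routine computation gives $\sum_{t\ge T}e^{g(t)}=O(1/(\beta n))\le e^{1/2}\beta$, the last step again by $\beta^2\ge 8\log n/n$; matching constants here is exactly what pins down the precise form of $\beta$. This is the tail bound, and since $0\le\ell\le 2$, integrating the tail gives $\expect{\ell(\hat H,H^*)\mid H^*}\le 2\beta+2\,\prob{\ell\ge 2\beta\mid H^*}\le (2+2e^{1/2})\beta\le 6\beta$.

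The main obstacle is not conceptual — it is a clean first moment — but lies in two bookkeeping points: (i) making the enumeration bounds $N_t\le\binom{kn/2}{t}(2t-1)!!$ and~\prettyref{eq:mean_bound} rigorous uniformly in $t$, in particular handling deleted $H^*$-edges that share endpoints (which is precisely where the degree-sequence/configuration-model argument is needed rather than a naive perfect-matching count), and (ii) checking that $4\log(1+\epsilon)$ is what places $T$ past the maximizer of $g$ while $\sqrt{8\log n/n}$ is what makes $g(T)$ fall below $-\log n$, so that both terms in the definition of $\beta$ are genuinely required. Notably, none of the harder ``tree plus sprinkling'' constructions used for the impossibility directions enter here.
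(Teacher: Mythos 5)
Your proposal is correct and follows essentially the same route as the paper: a first-moment bound over all $k$-factors of $G$ at distance at least $\beta kn$ from $H^*$, using the enumeration bound $\binom{kn/2}{t}(2t-1)!!(\lambda/n)^t$ (the paper's \prettyref{lmm:enumeration}), with $4\log(1+\epsilon)$ ensuring the summands decay geometrically and $\sqrt{8\log n/n}$ making the leading term $O(1/n)\le \beta^2/8$, followed by the same tail-integration $\expect{\ell}\le 2\beta+2\prob{\ell\ge 2\beta}\le 6\beta$. The only differences are cosmetic (you sum via increments of $g(t)$ and reduce to $\beta\le 1$, while the paper bounds each term by $e^{1/2}e^{-\beta t/4}$), and your deferred "routine computation" does indeed produce constants within the stated $e^{1/2}\beta$.
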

Setting $\epsilon=0$ in~\prettyref{thm:almost_perfect}
shows that almost eact recovery is achievable when $\lambda k \le 1.$
\begin{theorem}[Almost exact recovery, negative]\label{thm:impossibility}
If 
\begin{align}
    \lambda k \ge 1+\epsilon \label{eq:impossibility}
\end{align}
for some $\epsilon>0$, then there exists a constant $\epsilon'> 0$ depending only on $\epsilon$ and  $k$ such that
 for any estimator $\widehat{H}$ and large $n$, 
 \[\mathbb{P}\left(\ell(H^*, \hat{H}) \geq \epsilon' \right) = 1 -e^{-\Omega(n)}.\]
 It follows that for large enough $n$,
$$
\expect{\risk(\hat{H}, H^*)}\ge \frac{\epsilon'}{2}.
$$

\end{theorem}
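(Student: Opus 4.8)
The plan is to reduce the estimator lower bound to the statement that the posterior $\mu_G$ is spread out around $H^*$, and then to establish that spread by exhibiting exponentially many competing $k$-factors in $G$.

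\emph{Reduction via resampling.} Fix an arbitrary estimator $\widehat H=\widehat H(G)$, set $r\triangleq \epsilon'|H^*|=\epsilon' kn/2$, and for an edge set $F$ write $\mathcal{B}_\rho(F)\triangleq\{H: |H\triangle F|<\rho\}$. Since $H^*\mid G\sim\mu_G$ by~\prettyref{eq:posterior_distribution}, we have $\prob{\risk(H^*,\widehat H)<\epsilon'}=\expects{\mu_G(\mathcal{B}_r(\widehat H))}{G}$. Let $H,H'$ be independent draws from $\mu_G$ given $G$; the triangle inequality for $\triangle$ gives $\{H,H'\in\mathcal{B}_r(\widehat H)\}\subseteq\{|H\triangle H'|<2r\}$, hence $\mu_G(\mathcal{B}_r(\widehat H))^2\le\prob{|H\triangle H'|<2r\mid G}$. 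Averaging over $G$, applying Jensen, and using that a posterior sample jointly with $G$ has the law of the planted pair, i.e.\ $(G,H)\stackrel{d}{=}(G,H^*)$, we get
\begin{align}
\prob{\risk(H^*,\widehat H)<\epsilon'}\;\le\;\Bigl(\expects{\mu_G\bigl(\mathcal{B}_{2r}(H^*)\bigr)}{(G,H^*)}\Bigr)^{1/2}. \label{eq:reduction_plan}
\end{align}
If the right-hand side is $e^{-\Omega(n)}$ we are done, and then $\expect{\risk(\widehat H,H^*)}\ge\epsilon'\,\prob{\risk\ge\epsilon'}\ge\epsilon'/2$ for large $n$.

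\emph{Controlling $\mu_G(\mathcal{B}_{2r}(H^*))$.} Since $\mu_G$ is uniform on $\calH(G)$, $\mu_G(\mathcal{B}_{2r}(H^*))=|\calH(G)\cap\mathcal{B}_{2r}(H^*)|/|\calH(G)|$. For the numerator, conditioning on $H^*$ and summing the counting estimate behind~\prettyref{eq:mean_bound} over $0\le t\le r$ gives $\expect{|\calH(G)\cap\mathcal{B}_{2r}(H^*)|\mid H^*}\le\sum_{t\le r}(k\lambda)^t\le C(k\lambda)^{r}$, the geometric sum being dominated by its top term precisely because $k\lambda>1$. For the denominator, the key combinatorial input (next paragraph) is that $|\calH(G)|\ge 2^{c_0 n}$ with probability $1-e^{-\Omega(n)}$ for a constant $c_0=c_0(\epsilon,k)>0$. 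On that event $\mu_G(\mathcal{B}_{2r}(H^*))\le 2^{-c_0 n}|\calH(G)\cap\mathcal{B}_{2r}(H^*)|$, so $\expect{\mu_G(\mathcal{B}_{2r}(H^*))}\le e^{-\Omega(n)}+C\,2^{-c_0 n}(k\lambda)^{r}$. Choosing $\epsilon'$ small enough (as a function of $\epsilon,k$) that $r\log(k\lambda)=\tfrac{\epsilon' k}{2}n\log(k\lambda)<c_0 n\log 2$ makes the last term $e^{-\Omega(n)}$; substituting into~\prettyref{eq:reduction_plan} finishes the proof. (For unbounded $\lambda$ one additionally uses that $\calH(\cdot)$ is monotone under edge inclusion, so the lower bound $|\calH(G)|\ge 2^{c_0n}$ already holds at edge density $(1+\epsilon)/(kn)$.)

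\emph{Constructing $2^{\Omega(n)}$ competing $k$-factors --- and the main obstacle.} It remains to show $|\calH(G)|\ge 2^{c_0 n}$ with probability $1-e^{-\Omega(n)}$, and this is where the real work lies. The target is $m=\Theta(n)$ \emph{edge-disjoint} alternating cycles in $G$ (cycles whose edges alternate between $H^*$ and $G\setminus H^*$): toggling along any subcollection preserves $k$-regularity and yields $2^m$ distinct elements of $\calH(G)$. The obstruction is that a first-moment count shows alternating cycles of any \emph{fixed} length number only $\Theta(1)$, whereas the alternating cycles that genuinely appear in $G$ have length $\Theta(\log n)$, of which at most $\Theta(n/\log n)$ are disjoint --- giving only $2^{\Theta(n/\log n)}$ factors, insufficient for a constant $\epsilon'$ above. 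To force $\Theta(n)$ disjoint \emph{constant-size} alternating cycles we split the non-planted edges into a ``main'' round at density $\lambda'/n$ with $\lambda' k=1+\epsilon/2>1$ and a reserved ``sprinkling'' round at small constant density $\delta/n$. Local neighborhood exploration in the main round around $\Theta(n)$ well-separated vertices produces $\Theta(n)$ vertex-disjoint constant-size pending structures --- short alternating paths using $H^*$- and main-round edges --- each with two free endpoints; because $\lambda' k>1$, a bounded-difference/branching-process estimate upgrades this to ``at least $c_0'n$ survive with probability $1-e^{-\Omega(n)}$''. Since these structures cover a constant fraction of the vertices and the sprinkled edges are independent of the main round, a constant fraction of the sprinkled edges join two pending endpoints; treating the pending structures as super-nodes and these sprinkled edges as super-edges yields a sparse auxiliary random graph from which $\Theta(n)$ edge-disjoint cycles can be extracted and, after a parity check at each super-node, read back as $\Theta(n)$ constant-size alternating cycles in $G$, whence $|\calH(G)|\ge 2^{\Omega(n)}$. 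The main obstacle is exactly making this two-step tree-plus-sprinkling construction rigorous: genuinely decoupling the two exposure rounds, promoting ``$\Theta(n)$ pending structures in expectation'' to ``$\Theta(n)$ disjoint ones with probability $1-e^{-\Omega(n)}$'', and routing the sprinkled edges so that \emph{closed} alternating cycles (not merely longer alternating paths) are formed while keeping the whole collection disjoint --- this is what keeps the exponent of $|\calH(G)|$, and hence the constant $\epsilon'$, bounded away from $0$.
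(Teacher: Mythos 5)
Your reduction to the posterior sample and your first-moment bound on the $k$-factors near $H^*$ match the paper's argument (its \prettyref{eq:posterior-tail} and \prettyref{lmm:Mgood}), and you correctly identify that everything hinges on showing $G$ contains exponentially many $k$-factors. But your plan for that key step has a genuine gap. You aim to produce $\Theta(n)$ edge-disjoint \emph{constant-size} alternating cycles and toggle subsets of them; as your own first-moment observation shows, this is impossible: the expected number of alternating cycles of any bounded length is $O(1)$, and splitting the unplanted edges into a ``main'' round and a ``sprinkling'' round does not change the law of $G$, so no exposure scheme can manufacture $\Theta(n)$ short alternating cycles. The failure also shows up quantitatively in your super-node argument: once the main round uses $\lambda' k=1+\epsilon/2$ and (WLOG) $\lambda k=1+\epsilon$, the sprinkling density is at most $\delta=\epsilon/(2k)$, and since each pending structure exposes only two free endpoints, the auxiliary graph on $\Theta(n)$ super-nodes has expected degree $O(\epsilon/k)<1$ for the relevant range of $\epsilon$. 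A subcritical auxiliary graph contains $O_P(1)$ cycles in total, so the construction yields essentially nothing; and even if one pushed into the supercritical regime, the extractable cycles would have diverging length, contradicting the ``constant-size'' reading-back step, and the ``parity check'' does not address the real routing constraint (the two auxiliary edges at a super-node must attach to \emph{distinct} free endpoints) nor the independence/collision issue when two structures attach to the same endpoint.

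The paper's proof (\prettyref{lmm:Mbad}) escapes exactly this trap by abandoning short cycles altogether. Each structure is grown into a two-sided alternating tree with a \emph{large constant} number $\ell$ of red vertices per side, and is required to be blue-connected to $d$ reserved ``tree-facing'' endpoints on each side, with $\ell,d$ chosen large in terms of $\epsilon,k,\lambda$; the five-edge construction then links a pair of trees with probability about $\lambda d^2/(2n)$, boosting the auxiliary bipartite graph (a perfect red matching plus independent blue edges on $cn$ nodes) to average blue degree $\geq 256\log(32e)$, at which point Lemma \ref{lemma:bipartite-ER} of Ding et al.\ yields $e^{\Omega(n)}$ \emph{distinct alternating cycles of linear length} directly -- each one XORed with $H^*$ is a distinct $k$-factor, with no toggling of disjoint short cycles needed -- while collisions are ruled out by sequentially marking tree-facing endpoints in Algorithm \ref{alg:cycle-construction}. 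Two further points: the paper needs the competing factors to be \emph{far} from $H^*$ (length-$\Omega(n)$ cycles give this for free), which your version would also ultimately need if you wanted the stronger posterior statement; and your parenthetical for unbounded $\lambda$ is insufficient, since monotonicity only helps the denominator while your numerator bound $(k\lambda)^{r}$ grows with $\lambda$ -- the correct fix is the paper's reduction that adds edges so that WLOG $\lambda k=1+\epsilon$.
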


Next, we move to partial recovery when $\lambda=\Omega(1).$  Let $\hat{H}$ be the following estimator: 
\begin{align*}
\hat{H}(u,v) &=
\begin{cases}
1 & \deg(u) = k \text{ or } \deg(v) = k\\
0 & \text{otherwise}.
\end{cases}
\end{align*}
As we will see later, $\hat{H}$ coincides with the initial step of the iterative pruning algorithm.  The following lemma shows that $\hat{H}$ achieves partial recovery when $\lambda=O(1). $

\begin{theorem}[Partial recovery, positive]
\label{thm:partial}
 Under the planted $k$-factor model with $p=\lambda/n$,   
$$
\mathbb{E}[\ell(\hat{H}, H^*) \mid H^* ] \leq
1-e^{-2\lambda}
$$
and 
$$
\prob{ \ell(\hat{H}, H^*) \le 1- \frac{1}{2} e^{-2\lambda}\mid H^* } \ge 1-O(1/n).
$$

Furthermore, with probability $1-O(1/n)$, 
all $k$-factors in $G$
agree on at least $e^{-2\lambda} n k/4$ edges
and in particular, share at least a $e^{-2\lambda}/2$ fraction of their edges with $H^*$. 
\end{theorem}

The following result derives a complementary lower bound on the error of any estimator, proving that partial recovery is impossible when $\lambda=\omega(1). $

\begin{theorem}[Partial recovery, negative]\label{thm:nothing}
Under the planted $k$-factor model with $p=\lambda/n$, there exists a universal constant $0<c<1$ such that  for any estimator $\hat{H}$, 
$$
\prob{ \ell(\hat{H}, H^*) \ge 1-  \frac{\log(k^2/c)}{\log(k\lambda)} } \ge 1- 3 (k^2/c)^{-nk/4}.
$$
It follows that 
$$
\mathbb{E}[\ell(\hat{H}, H^*)]
\ge 1- \frac{\log(k^2/c)}{\log(k\lambda)}  - 3 (k^2/c)^{-nk/4}.
$$
Moreover, with probability at least $1- 3(k^2/c)^{-nk/8}$, at least a $1-(k^2/c)^{-nk/8}$
fraction of $k$-factors in graph $G$ share at most $\frac{2\log(k^2/c)}{\log(k\lambda)} $ fraction of their edges with $H^*$.
\end{theorem}

\subsection{Algorithmic Limits}
In the previous subsection, we have been focusing on characterizing the information-theoretic thresholds. In this subsection, we explore the algorithmic limits.

Theorems \ref{thm:exact_recovery} and \ref{thm:almost_perfect} imply that to achieve either the exact or almost exact recovery thresholds, it suffices to output any $k$-factor in the observed graph $G$. It is known that finding a $k$-factor in general graphs can be done efficiently in total time $O(n^3 k)$~\cite{meijer2009algorithm}.
Alternatively, for the planted $k$-factor model, we can show a linear-time iterative pruning algorithm~\cite{sicuro2021planted}
outputs a set of edges $\hat{H}$ (which may not necessarily be a valid $k$-factor) that 
achieves the thresholds for the exact, almost exact, and partial recovery of $H^*.$

\subsubsection{Iterative Pruning algorithm} To begin with, each vertex $i$ is assigned an initial capacity $\kappa_i = k$. The capacity of each vertex will keep track of the number of unidentified planted edges incident to $i.$ Then we repeatedly apply the following pruning procedure until all vertices have degrees bigger than their capacities. Find a vertex whose degree equals its capacity. Note that all its incident edges must be planted. Thus we
remove this vertex and all its incident edges from the graph $G$, and decrease by $1$ the capacities of the endpoints of the removed edges. If there exist vertices whose capacities drop to $0$, then their incident edges must be unplanted. Thus we remove these vertices, together with all their incident edges. Finally, when the iterative pruning process stops, we output $\hat{H}$ to be the set of planted edges identified during the process.

\begin{figure}[t]
\centering
\begin{minipage}{.45\linewidth}
  \centering
\includegraphics[width=\linewidth]{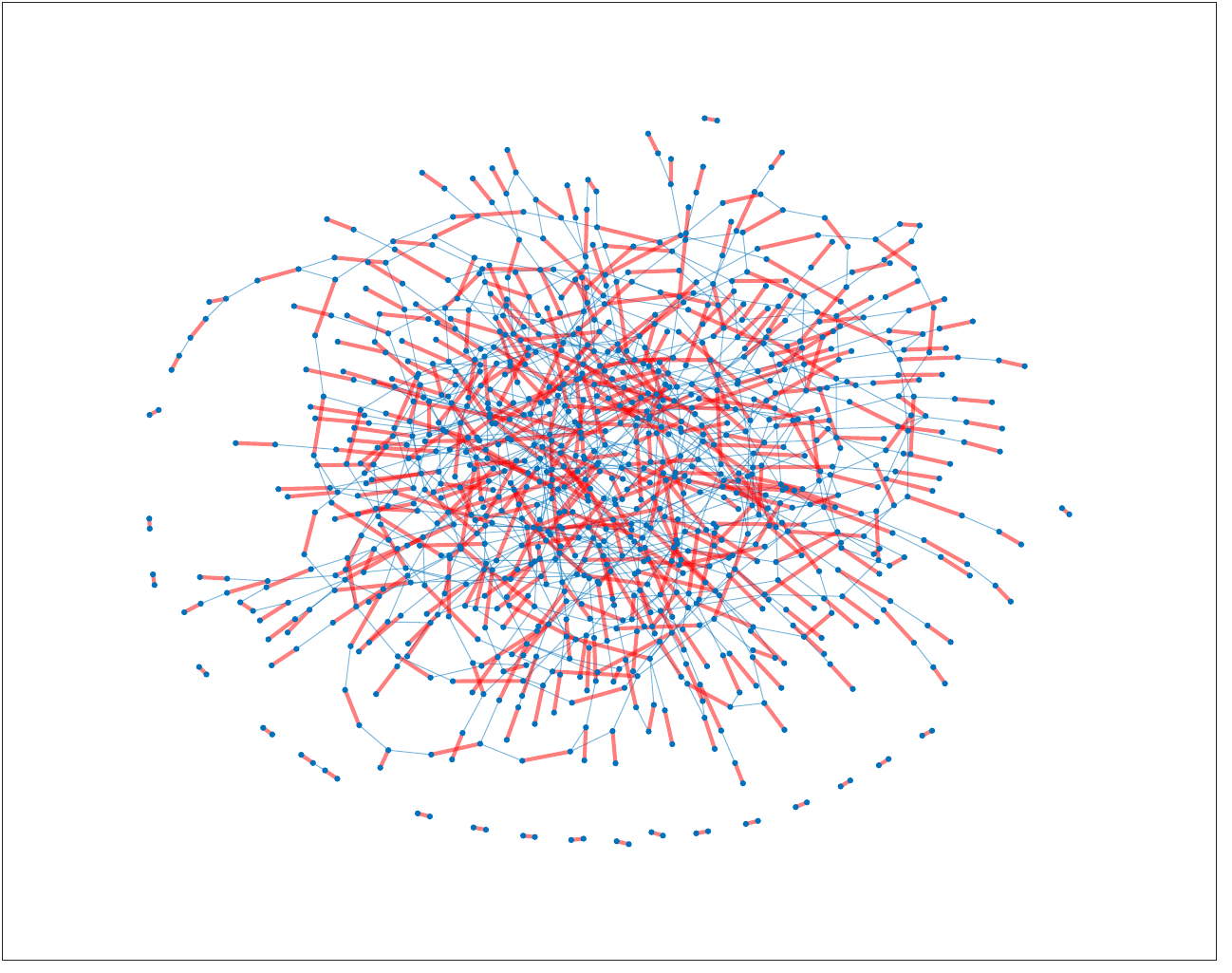}
\end{minipage}
\begin{minipage}{.45\linewidth}
  \centering  \includegraphics[width=\linewidth]{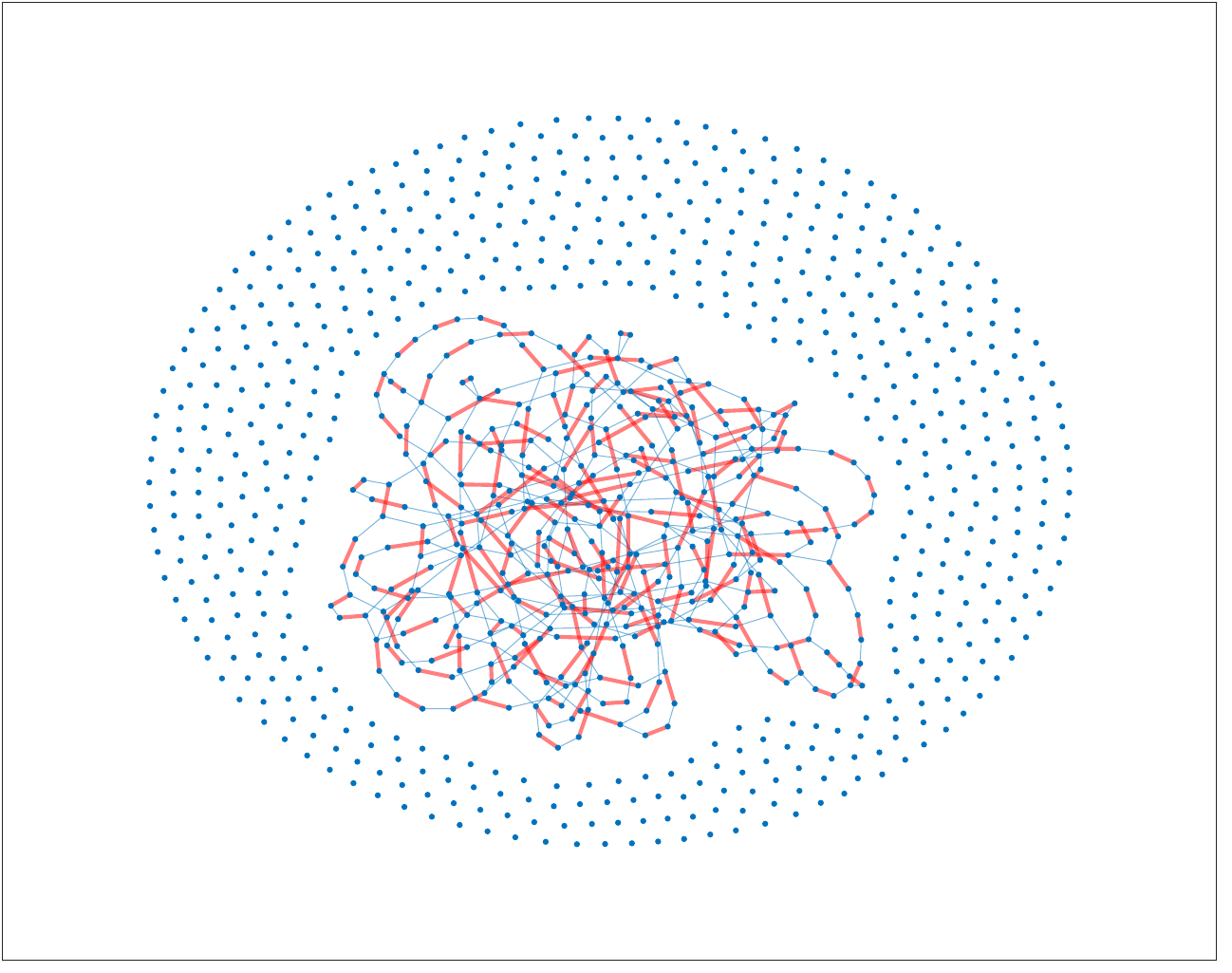}
\end{minipage}
\caption{The planted matching model ($k=1$ and $\lambda=1.5$), with planted edges marked in red and unplanted edges marked in blue. 
     Left panel: The observed graph $G$; Right panel: The remaining core after applying the iterative pruning algorithm.}
     \label{fig:pruning}
\end{figure}

Note that this algorithm inspects each edge at most twice (once to compute all the vertex degrees initially, then another time over the course of the pruning iterations). 
Hence its runtime is linear in the number of edges. We call the final remaining graph a ``core''\footnote{We caution the reader that this core is different from the standard notion of $k$-core, which is determined by iteratively removing vertices with degree less than $k$ and all their incident edges.}, denoted by $C_n$. If the core $C_n$ is empty, then we correctly identify all the planted edges and achieve exact recovery. If the core $C_n$ has $o(n)$ edges, then we correctly identify all but $o(n)$ planted edges and achieve almost exact recovery. See~\prettyref{fig:pruning} for a graphical illustration.

Next, we show that with high probability, if $\lambda=o(1),$ then the core is empty; if $k\lambda<1,$ then the core has $o(n)$ planted edges. This result will imply that the iterative pruning algorithm achieves both the exact and almost exact recovery thresholds. In fact, we will prove a stronger result, characterizing the asymptotic number of planted edges in the core $C_n$. 
\begin{theorem}[Iterative pruning algorithm] \label{thm:size}
If $k\lambda \equiv c$ for a constant c, then for any planted edge $e,$
\begin{align}
\lim_{n\to \infty} \prob{e \in C_n \mid H^*} = (1-\rho)^2,
\end{align}
where if $c\le 1$, then $\rho=1$; if $c>1$, then $\rho<1$ is the fixed point solution of 
\begin{equation}
\rho=\exp\left(-\lambda \left(1- \rho^k \right) \right) \label{eq:rho}   
\end{equation}
If $\lambda=o(1)$, then with high probability $C_n$ is empty. 
\end{theorem}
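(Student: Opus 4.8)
The plan is to reduce the behaviour of the core $C_n$ to that of the iterative pruning process run on the local weak limit of $G$ around a planted edge, and then transfer the answer back to the finite graph. First I would identify this limit: conditioned on $H^*$ and on $e=(u,v)$ being planted, the rooted neighbourhood of $e$ in $G$ converges in distribution to an infinite random tree $\mathcal{T}$ in which every vertex is incident to exactly $k$ ``planted'' edges, which form a $k$-regular backbone, and additionally spawns an independent $\Pois(\lambda)$ number of ``unplanted'' children, each rooting an independent copy of the same structure. For a uniformly random $k$-factor $H^*$ this holds for almost every $H^*$ because short cycles of $H^*$ are rare in any bounded neighbourhood; and in fact every $H^*$ can be handled, since an isolated $H^*$-cycle untouched by unplanted edges is peeled away in its entirety and therefore contributes nothing. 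The case $k=1$ of everything below is carried out in \cite{Ding2023}, and the argument extends.

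On $\mathcal{T}$ the pruning is a confluent peeling process, so its output does not depend on the order of operations and admits a recursive description. Unravelling the update rules: a planted edge incident to a vertex $w$ is identified ``from the $w$ side'' precisely when all of $w$'s unplanted edges have been removed, and an unplanted edge $wz$ is removed precisely when the residual capacity of $z$ hits $0$, i.e.\ when all $k$ planted edges at $z$ have been identified. Let $\alpha$ be the probability that a vertex reached through an unplanted edge has its residual capacity hit $0$ within its subtree, and let $\rho$ be the probability that a vertex reached through a planted edge has all of its unplanted edges removed within its subtree --- equivalently that the connecting planted edge is identified from that child's side. Since the child vertex has $\Pois(\lambda)$ unplanted children, each of which must reach capacity $0$ (probability $\alpha$) independently, $\rho=\expect{\alpha^{\Pois(\lambda)}}=\exp(-\lambda(1-\alpha))$; and since a vertex reached through an unplanted edge has $k$ planted children, all of which must be identified from below, $\alpha=\rho^{k}$. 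Eliminating $\alpha$ gives $\rho=\exp(-\lambda(1-\rho^{k}))$, i.e.\ \prettyref{eq:rho}. Writing $c=k\lambda$ and $x=1-\rho^{k}$ turns this into $x=1-e^{-cx}$, the survival-probability equation of a $\Pois(c)$ Galton--Watson tree; peeling from the leaves inward amounts to iterating the monotone map $\alpha\mapsto\exp(-c(1-\alpha))$ from $\alpha=0$, which increases to the smallest fixed point in $[0,1]$ --- this is $\alpha=1$ (so $\rho=1$) when $c\le 1$ and $\alpha<1$ (so $\rho<1$) when $c>1$, as claimed. Finally $e=(u,v)$ lies in $\mathrm{core}(\mathcal{T})$ iff it is identified from neither the $u$ nor the $v$ side; these two events are functions of the two disjoint subtrees at $u$ and at $v$, hence independent, each of probability $\rho$, so $\prob{e\in\mathrm{core}(\mathcal{T})}=(1-\rho)^2$.

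The main work is to prove that $\prob{e\in C_n\mid H^*}\to(1-\rho)^2$, i.e.\ that the pruning ``localizes'': whether $e$ survives is, asymptotically and with high probability, determined by a bounded-radius neighbourhood of $e$, which converges to that of $\mathcal{T}$. One inequality is easy: truncating to a depth-$r$ ball and pretending the boundary helps not at all can only identify fewer edges than the true process, giving $\limsup_n\prob{e\in C_n\mid H^*}\le(1-\rho_r)^2$ with $\rho_r\downarrow\rho$ as $r\to\infty$. The reverse inequality --- that $e$ genuinely survives with probability at least $(1-\rho)^2$ --- is the obstacle, and here the naive ``optimistic'' truncation is useless, because the map $\alpha\mapsto\exp(-c(1-\alpha))$ fixes $1$, so letting far-away capacities vanish for free propagates inward trivially and certifies nothing. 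One must instead control the \emph{global} trajectory of the peeling. Concretely I would run Wormald's differential-equation method, tracking the empirical counts of vertices indexed by (residual degree, number of already-identified planted edges) as the pruning proceeds; away from the critical value $c=1$ these counts concentrate around the solution of an autonomous ODE system whose terminal state is exactly the fixed point above, which pins down the limiting fraction of surviving planted edges and, combined with the local convergence of $(G,e)$ to $\mathcal{T}$, the per-edge probability $(1-\rho)^2$. The boundary case $c=1$ then follows by monotone comparison with $c<1$ and $c>1$ instances. (Equivalently one may invoke the general principle that peeling commutes with local weak limits away from criticality; the content is the same.)

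Finally, when $\lambda=o(1)$ I would argue directly rather than through $\rho\to 1$, which by itself only yields $o(n)$ surviving edges and not emptiness: there are only $O(\lambda n)=o(n)$ unplanted edges in total, so for any fixed vertex the chance that some unplanted edge lies within a slowly growing radius $r_n\to\infty$ of it is $O(k^{r_n}\lambda)=o(1)$. Hence with high probability every planted edge sits in a region all of whose vertices are incident to planted edges only; some such vertex then has initial degree $k$ equal to its capacity, is removed immediately, and triggers a cascade that peels the whole region, leaving $C_n=\emptyset$ with high probability.
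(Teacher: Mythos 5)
Your fixed-point derivation and the upper bound are sound and essentially match the paper: couple the alternating neighborhood of $e$ to a two-type branching process with two-layer offspring $k\cdot\Pois(\lambda)$, note that extinction of either side within depth $2t$ forces $e\notin C_n$, and let $t\to\infty$ to get $\limsup_n\prob{e\in C_n\mid H^*}\le(1-\rho)^2$. The gap is in the lower bound, which you correctly identify as the crux but do not close. The theorem is a \emph{per-edge} statement conditioned on an \emph{arbitrary} $H^*$, and your Wormald/differential-equation route only yields (at best, and only after substantial unwritten work setting up and solving the ODE system for this peeling conditioned on a fixed, arbitrary planted $k$-factor) the limiting \emph{global fraction} of surviving planted edges. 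Transferring that to $\prob{e\in C_n\mid H^*}\ge(1-\rho)^2-o(1)$ for a given edge is not automatic: planted edges are not exchangeable under a general $H^*$ (e.g.\ $k=2$ with cycles of different lengths), and the squeeze ``average $\ge(1-\rho)^2-o(1)$ plus uniform per-edge upper bound'' fails because the error terms get multiplied by $\Theta(n)$ when you isolate a single edge. The paper avoids this entirely with a per-edge \emph{certificate}: if $e$ lies on an alternating cycle, the pruning can never remove it (flip the colors along the cycle -- the algorithm never misclassifies, so it cannot touch the cycle), and such a cycle through $e$ is constructed with probability $(1-\rho)^2-o(1)$ by growing a single two-sided alternating tree at $e$ until each side has $\sqrt{n\log n}$ red leaves and then sprinkling a three-edge (blue--red--blue) connection through one of $\gamma n$ reserved planted edges. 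This idea is absent from your proposal, and without it (or a genuinely completed global-to-local transfer) the lower bound is not established.

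The $\lambda=o(1)$ emptiness argument also has a genuine flaw. Your claim that w.h.p.\ \emph{every} planted edge sits in a region whose vertices meet only planted edges is false in part of the regime: for $\lambda=1/\log n$ there are $\Theta(n/\log n)$ unplanted edges, hence $\Theta(n/\log n)$ planted edges with an endpoint incident to an unplanted edge, and your per-vertex bound $O(k^{r_n}\lambda)=o(1)$ cannot be union-bounded over $n$ vertices. Even the salvageable per-edge version (both endpoints of $e$ untouched by blue edges with probability $e^{-2\lambda+o(1)}$) only gives $\prob{e\in C_n}=o(1)$, not $C_n=\emptyset$ w.h.p.: the expected number of planted edges with both endpoints touched by blue edges is of order $n\lambda^2$, which need not vanish, and removing those edges requires cascades passing \emph{through} vertices incident to unplanted edges. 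The paper handles exactly this with a structural lemma -- if $G$ contains no ``almost'' alternating cycle then the core is empty (maximal alternating path argument) -- combined with a first-moment bound $\sum_t(\lambda k)^t(\lambda+k)=o(1)$ on almost-alternating cycles; some argument of this global type is needed and is missing from your sketch.
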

\begin{remark}
Note that according to the iterative pruning procedure,  $\hat{H} \subset H^*$. 
Thus~\prettyref{thm:size} shows that 
$$
\expect{\ell \left(H^*, \hat{H} \right) \mid H^* } = \frac{1}{|H^*|} \sum_{e \in H^*} \prob{e \in C_n \mid H^*} \to \left(1-\rho\right)^2.
$$
Then when $k\lambda \le 1$, 
$\ell(H^*,\hat{H})$ converges to $0$ in expectation and probability. 
However, the iterative pruning procedure may not achieve the minimum reconstruction error when $\lambda k >1.$ In fact,  additional structures in the core could help identify the planted edges. 
\end{remark}

\section{Proof Overview}\label{sec:proof_ideas}
In this section, we present the main proof ideas. 

\subsection{Alternating Circuits}
For ease of visualization, we color the planted edges red and unplanted edges blue. Our starting point is the following key observation: for any $k$-factor $H$, the symmetric difference $H\Delta H^*$ forms an even graph with balanced red and blue degrees. Consequently, it can be decomposed into a union of disjoint \emph{alternating circuits} (cycles with possibly repeated edges whose edges alternate between red and blue) (see e.g.~\cite[Theorem 1]{kotzig1968moves} and \cite[Theorem 1]{pevzner1995dna}). See Figure~\ref{fig:alt.circuit} for an illustrative example of a difference graph and its corresponding alternating circuit for $k=2$.

 \begin{figure}[ht]
   \begin{center}
\begin{tikzpicture}
    \def\pentagonSize{1.5cm}
    \draw[thick,red] (0:\pentagonSize)
        \foreach \i in {0,72, 144,216,288} {
            -- (\i:\pentagonSize) 
        }
        -- cycle; 
    \draw[thick,blue] (0:\pentagonSize)
        \foreach \i in {0,144,288,72,216} {
            -- (\i:\pentagonSize)
        }
        -- cycle; 

    \foreach \i/\label in {0/1,72/2,144/3,216/4,288/5} {
        \node at (\i:1.2*\pentagonSize) {\label};
    }
\end{tikzpicture}
\end{center}
\caption{An example of a difference graph with 5 vertices and a corresponding alternating circuit. One possible traversal is \(1 \to 2 \to 5 \to 1 \to 4 \to 5 \to 3 \to 4 \to 2 \to 3 \to 1\), where the edges alternate between red and blue along the circuit.}
 \label{fig:alt.circuit}
\end{figure}

This decomposition allows us to enumerate $k$-factors according to their Hamming distance from $H^*$, as formalized in the following lemma. 
\begin{lemma}[Enumerating $k$-factors]\label{lmm:enumeration}
For any $H^* \in \calH$ and any $0 \le t \le kn/2,$ 
\begin{align}
\left|\{ H \in \calH: |H \Delta H^*| = 2t  \}\right| \le \binom{kn/2}{t} \frac{(2t)!}{2^t t!}
\le \binom{kn/2}{t} 2^t t!
\le (kn)^t e^{-t(t-1)/(kn)}. \label{eq:M_ell_bound}
\end{align}
\end{lemma}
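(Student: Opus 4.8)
The plan is to encode each $k$-factor $H$ by the pair of edge sets it differs from $H^*$ in, and then count these pairs. Write $R \triangleq H^* \setminus H$ for the set of planted edges that $H$ omits and $B \triangleq H \setminus H^*$ for the set of non-planted edges that $H$ adds; then $H = (H^* \setminus R) \cup B$, so the map $H \mapsto (R,B)$ is injective once $H^*$ is fixed. Since $|H| = |H^*| = kn/2$ we have $|R| = kn/2 - |H \cap H^*| = |B|$, and since $|H \Delta H^*| = |R| + |B| = 2t$ this forces $|R| = |B| = t$. The crucial structural point — the manifestation of the $k$-regularity constraint — is that $R$ and $B$ have the same degree sequence: writing $c_v$ for the number of edges of $H \cap H^*$ incident to $v$, one has $\deg_R(v) = \deg_{H^*}(v) - c_v = k - c_v = \deg_H(v) - c_v = \deg_B(v)$ for every $v$. (Equivalently, $H\Delta H^*$ is an even graph with equal red and blue degree at each vertex, which is exactly the property underlying its decomposition into alternating circuits.)

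Given this, I would bound the count in two stages. First, $R$ is an arbitrary $t$-subset of $E(H^*)$, so there are $\binom{kn/2}{t}$ choices. Second, for each fixed $R$ with degree sequence $(d_v)_v$ (so $\sum_v d_v = 2t$), the set $B$ must be a simple graph on $[n]$ with that same degree sequence — and disjoint from $H^*$, a constraint I drop since we only want an upper bound. The number of graphs with a prescribed degree sequence is at most the number of configurations in the configuration model on the corresponding $2t$ stubs, i.e.\ the number of perfect matchings of $2t$ labelled points, $(2t-1)!! = \frac{(2t)!}{2^t t!}$: every such $B$ is the projection of at least one stub-pairing, so the projection map from pairings to realizable (multi)graphs is surjective, whence the inequality goes the right way. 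This holds for any degree sequence, hence uniformly over $R$. Multiplying gives the first inequality
\[
|\{H \in \calH : |H \Delta H^*| = 2t\}| \le \binom{kn/2}{t}\, \frac{(2t)!}{2^t t!}.
\]

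The remaining two inequalities are elementary arithmetic. For the second, $\frac{(2t)!}{2^t t!} = 1\cdot 3 \cdots (2t-1) \le 2\cdot 4\cdots (2t) = 2^t t!$, comparing the factors one at a time. For the third, expand
\[
\binom{kn/2}{t}\, 2^t t! \;=\; 2^t \prod_{i=0}^{t-1}\left(\tfrac{kn}{2} - i\right) \;=\; \prod_{i=0}^{t-1}(kn - 2i) \;\le\; \prod_{i=0}^{t-1} kn\, e^{-2i/(kn)} \;=\; (kn)^t\, e^{-t(t-1)/(kn)},
\]
using $1 - x \le e^{-x}$ with $x = 2i/(kn)\in[0,1)$ (legitimate since $i \le t-1 \le kn/2 - 1$) and $\sum_{i=0}^{t-1} i = t(t-1)/2$.

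I do not anticipate a real obstacle: the only points requiring care are the direction of the configuration-model bound (one must make sure \emph{every} simple graph with the prescribed degree sequence, including ones a particular $R$ could not produce, is accounted for, so that the count is an overcount) and the bookkeeping that $H \mapsto (R,B)$ is injective with $|R| = |B| = t$. Everything after the first inequality is a one-line estimate.
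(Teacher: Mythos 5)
Your proposal is correct and follows essentially the same route as the paper: fix the $t$ removed planted edges ($\binom{kn/2}{t}$ choices) and bound the number of ways to add the $t$ blue edges by the $(2t-1)!!=\frac{(2t)!}{2^t t!}$ pairings of the $2t$ endpoint stubs, followed by the same elementary estimates. Your configuration-model phrasing simply makes explicit the ``perfect matching between the $2t$ endpoints'' step that the paper states via the alternating-circuit decomposition, and your factor-by-factor proof of $\frac{(2t)!}{2^t t!}\le 2^t t!$ is an equally valid variant of the paper's $\binom{2t}{t}\le 2^{2t}$ argument.
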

\begin{proof}
It suffices to count all possible unions of alternating circuits with a total length $2t$. Observe that such a union corresponds to selecting
$t$ planted edges and forming a perfect matching between the $2t$ endpoints. 
Since there are only $kn/2$ planted edges,
the number of ways to select $t$ planted edges is $\binom{kn/2}{t}$. Next, the $2t$ endpoints of these edges can be paired in  $(2t-1)!! = (2t)!/ (2^t t!)$ ways. Consequently, we deduce the first inequality. 
The second inequality~\prettyref{eq:M_ell_bound} holds due to $(2t)!/(t!)^2 =\binom{2t}{t} \le 2^{2t}$ 
and the last inequality holds because 
$$
\binom{kn/2}{t} 2^t t! \le \prod_{i=0}^{t-1} \left(kn -2i\right) 
= (kn)^t \prod_{i=1}^{t-1} \left( 1- \frac{2i}{kn}\right)
\le (kn)^t   \exp\left( - \frac{t(t-1)}{kn} \right). 
$$
\end{proof}

Using~\prettyref{lmm:enumeration} and the first-moment method, we can then show that with high probability, for any $k$-factor $H$ in the observed graph, $\ell(H^*,H)=0$ when $\lambda k=o(1)$, $\ell(H^*, H) =o(1)$
when $\lambda k \le 1$, achieving exact and almost exact recovery, respectively (and thus proving the positive side of Theorem \ref{thm:exact_recovery}). Theorem \ref{thm:almost_perfect} follows from a similar argument, and the achievability of partial recovery
when $\lambda =O(1)$ (Theorem \ref{thm:partial}) follows from the simple observation that with high probability the background graph $G_0$ contains $\Theta(n)$ isolated nodes whose incident edges in $G$ must be planted.

The impossibility of exact recovery when $k\lambda=\Omega(1)$ follows by proving the existence of an alternating cycle of length $4$ in the observed graph $G$. In contrast, proving the impossibility of almost exact recovery is significantly more challenging and requires an in-depth analysis of the posterior distribution, as we explain below. 

\subsection{Proof Ideas for Theorem~\ref{thm:impossibility}}

 A  crucial observation is that while a random draw $\tilde{H}$ from the posterior distribution~\eqref{eq:posterior_distribution} may not minimize the reconstruction error, its error is at most twice the minimum. Indeed, we can relate $\ell(H^*,\tilde{H})$ to $\ell(H^*,\hat{H})$ where $\hat{H}$ is any estimator, as follows: for any $D>0$
\begin{align}
\mathbb{P}\left(\ell(H^*,\tilde{H}) < 2D \right) &\ge \mathbb{P}\left(\ell(H^*,\hat{H})< D,\ell(\tilde{H},\hat{H}) <D \right) \nonumber \\
&=\expects{\mathbb{P}\left(\ell(H^*,\hat{H}) < D \mid G \right)\cdot \mathbb{P}\left(\ell(\tilde{H},\hat{H}) < D \mid G\right)}{G} \nonumber \\
&= \expects{\left(\mathbb{P}\left(\ell(H^*,\hat{H}) < D \mid G\right)\right)^2}{G} \nonumber \\
& \ge \left(\mathbb{P}\left(\ell(H^*,\hat{H})< D \right)\right)^2 \label{eq:posterior-tail}, 
\end{align}
where the first and second equalities hold because $H^*$ and $\tilde{H}$ are two independent draws from the posterior distribution conditioned on $G$; the last inequality holds by Jensen's inequality.

 Therefore, it suffices to prove that the posterior sample $\tilde{H}$ has $\Omega(1)$ reconstruction error, which further reduces to demonstrating that the observed graph $G$ contains many more $k$-factors that are far from $H^*$ than those close to $H^*$. Using~\prettyref{lmm:enumeration}, a simple first-moment analysis bounds the number of $k$-factors in $G$ that are close to $H^*$. 
 
 To lower-bound the number of $k$-factors that are far away from $H^*$, recall that for any $k$-factor $H$, $H\Delta H^*$
can be decomposed into a union of disjoint alternating circuits. Moreover, given any union of disjoint alternating circuits $C$, the XOR $C\oplus H^*$ is a $k$-factor. Therefore, it suffices to show that when $\lambda k\geq 1+\epsilon$, there exist $e^{\Omega(n)}$ many alternating circuits of length $\Omega(n)$ with high probability. However, lower-bounding the number of long circuits is challenging; 
a naive second-moment analysis does not work due to the excessive correlations among these long alternating circuits.  

To overcome this challenge, 
we extend the two-stage constructive argument in~\cite{Ding2023} from the planted matching model with $k=1$ to $k \ge 2$: First, we construct $\Theta(n)$ many alternating paths of constant length via a carefully designed neighborhood exploration process; Second, we connect these alternating paths to form exponentially many
distinct alternating cycles using previously reserved edges via a sprinkling technique.

In more detail, we first reserve $\Theta(n)$ vertex-disjoint edges in $H^*$, which are chosen independently of $G_0$. Using the remainder of the graph, we then construct $\Theta(n)$ two-sided alternating trees. Each tree begins with a red edge, whose endpoints are the left and right roots. The trees are built by a breadth-first search, with odd layers added via blue edges and even layers added via red edges (see Figure \ref{fig:tree}). Crucially, we need to ensure that for any vertex $v$ added in an odd layer, none of its red neighbors have already been included in the tree (or any previously constructed trees). We therefore keep track of the \emph{full-branching} vertices- those which can safely be added at odd layers. Viewing the tree construction process as adding two layers at a time, each side of the tree is well-approximated by a branching process whose offspring distribution has a mean of $\lambda k$. If $\lambda > \frac{1}{k}$, then the branching process is supercritical, and hence has a (quantifiable) nonzero probability of survival. By making the comparison to a binomial branching process rigorous, we are able to lower-bound the probability that a given tree grows to a prescribed (constant) size on both sides.

\begin{figure}[t]
    \centering    \includegraphics[scale=0.3]{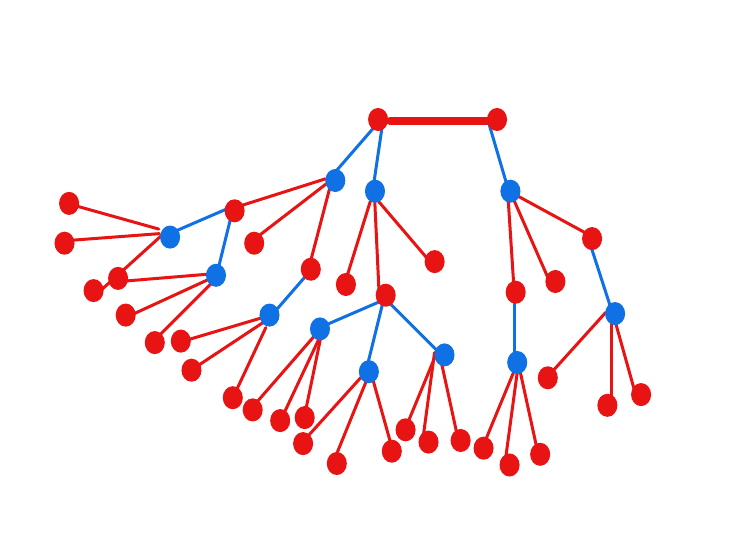}
    \caption{A two-sided alternating tree up to depth four, where the top (bolded) edge connects the two roots. Red and blue vertices are indicated. Here, all blue vertices have $k=3$ red descendants, as is the case in the tree construction.}
    \label{fig:tree}
\end{figure}

The tree construction process provides a linear number of large, two-sided alternating trees. Next, we assemble these trees into cycles. We first introduce some terminology: we say that a tree vertex is blue (resp. red) if the edge to its parent is blue (resp. red). The reserved edges are divided evenly into ``left'' and ``right'' sets $E_L^{\star}$ and $E_R^{\star}$. Additionally, for every reserved edge, one endpoint is designated as the ``tree-facing'' vertex while the other is designated as the ``linking'' vertex. The trees will be connected by a five-edge construction, as in Figure \ref{fig:five-edge-construction}. For the purposes of the construction, we say that a tree $L_i$ is \emph{blue-connected} to a tree-facing endpoint $u$ in $E_L^{\star}$ if some red vertex in $L_i$ is connected to $u$ by a blue edge. Similarly,  we say that a tree $R_j$ is blue-connected to a tree-facing endpoint $v$ in $E_R^{\star}$ if some red vertex in $R_j$ is connected to $v$ by a blue edge. If the corresponding linking vertices are connected by a blue edge (as in the long blue edge in Figure \ref{fig:five-edge-construction}), then we say that the $i^{\mathrm{th}}$ and  $j^{\mathrm{th}}$ trees are connected by a ``five edge construction'' (which is comprised of the three blue edges and two red edges which are bolded).

Observe that if there exists a sequence $i_1, \dots, i_m, i_{m+1} \equiv i_1$ where the $i_j^{\mathrm{th}}$ and  $i_{j+1}^{\mathrm{th}}$ trees are connected by a five edge construction for all $j \in [m]$, and no two of these five-edge constructions share a reserved edge, then we can form an alternating cycle that passes through these trees. We would like to argue that there are many long alternating cycles. To do so, we focus on trees $(L_i, R_i)$ that are blue-connected to many tree-facing endpoints, and discard the rest. Choosing a suitable constant $d$, we identify a set of trees that are blue-connected on both sides to $d$ tree-facing endpoints, associating each selected tree with $d$ tree-facing endpoints. Importantly, the sets of endpoints associated to different trees must be disjoint.

After discarding some trees, we are left with a collection of $m$ trees that are each red-connected to at least $d$ tree-facing vertices on each side. Thus, every pair $(L_i, R_j)$ is connected via a linking edge, and thus a five-edge construction, with probability at least $q :=1 - (1-p)^{d^2}$. Moreover, since we have associated each tree with a disjoint set of tree-facing neighbors, it follows that each pair $(L_i, R_j)$ is independently connected via a five-edge construction with probability $q$. We form an auxiliary bipartite graph with $m$ nodes on each side, with the left side corresponding to left trees and the right side corresponding to right trees. The bipartite graph contains a perfect red matching (to symbolize the root edges of the trees) and additionally contains blue edges independently with probability $q$. It follows that any cycle in the auxiliary graph induces a corresponding cycle which is at least as long in the original graph $G$. We thus apply known results from \cite{Ding2023} to lower-bound the number of long alternating cycles in two-colored bipartite graphs containing a perfect matching.

\begin{figure}[t]
    \centering
\includegraphics[scale=0.45]{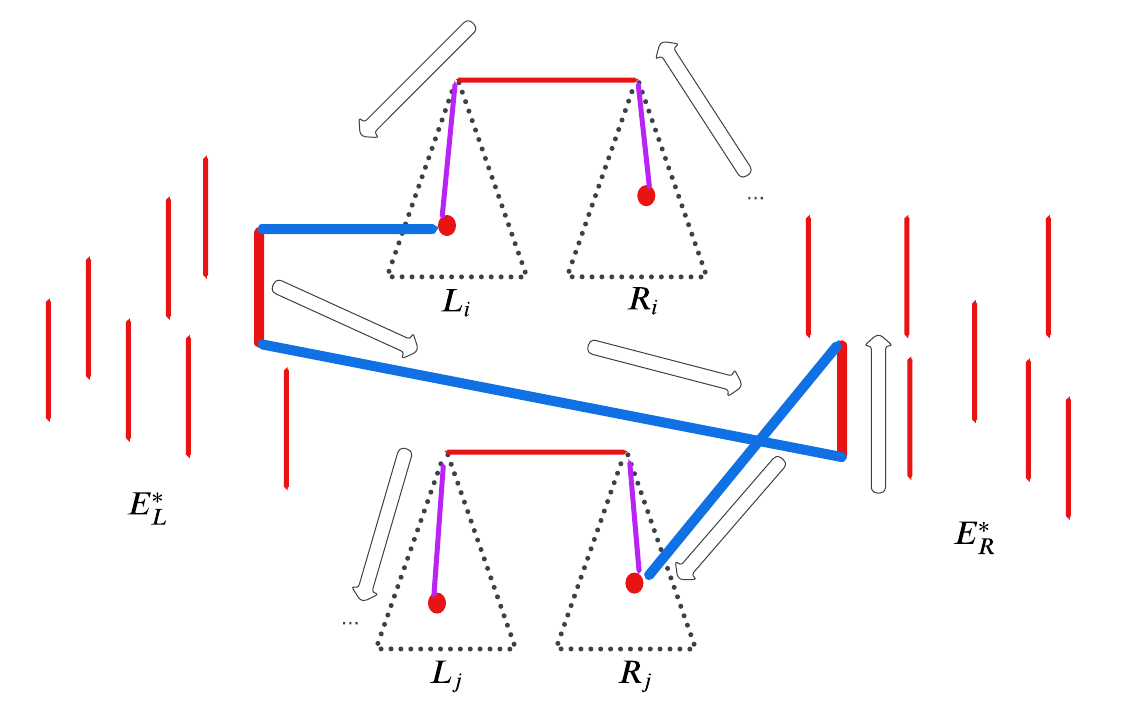}
    \caption{A portion of an alternating cycle, with the cycle order indicated by the arrows. Alternating paths are indicated by purple lines. Reserved edges in $E_L^{\star}$ and $E_R^{\star}$ are drawn vertically, with the top endpoints as the tree-facing endpoints, and the bottom endpoints as the linking endpoints. 
    Here, the tree $L_i$ is connected to the tree $R_j$ by a five-edge construction comprising three blue edges and two red edges (bolded): $L_i$ is blue-connected to the tree-facing endpoint of an edge $e_L$ in $E_L^{\star}$, and similarly $R_j$ is blue-connected to the tree-facing endpoint of an edge $e_R$ in $E_R^{\star}$. In turn, the linking endpoints of $e_L$ and $e_R$ are connected by a blue edge.}
    \label{fig:five-edge-construction}
\end{figure}

While the alternating cycle construction closely follows the steps in~\cite{Ding2023}, there are some differences. Notably, when $k \ge 2$, coupling the neighborhood exploration process with a branching process of $k\lambda$ mean offspring requires the vertices  to be ``full-branching'' with all their $k$ planted neighbors unvisited before. A more minor difference is that \cite{Ding2023} focuses on the bipartite graph, which simplifies the sprinkling step. In comparison,  our setting requires additional care to handle the unipartite nature of the graph; in particular, we manually assign tree roots and reserve edges to either the left or the right.

\FloatBarrier

\subsection{Proof Ideas for Theorem~\ref{thm:nothing}}
Establishing the impossibility of partial recovery when $\lambda=\omega(1)$ reduces to showing the posterior sample $\tilde{H}$ shares $o(n)$ edges with $H^*$, or equivalently, almost all $k$-factors in the observed graph $G$ are almost disjoint from $H^*$.
An upper bound on the number of $k$-factors in $G$ that share $\Omega(n)$ edges with $H^*$ can again be obtained using~\prettyref{lmm:enumeration} and the first-moment analysis. To lower-bound the number of $k$-factors that are almost disjoint from $H^*$, instead of applying an explicit constructive argument as aforementioned, we can simply bound the total number of $k$-factors in $G$ from below using the expected number of $k$-factors in $G_0.$ This follows from a simple yet elegant change-of-measure argument in~\cite[Lemma 3.9]{Mossel2023}, itself inspired by an earlier work on random constraint satisfaction problems \cite{achlioptas2008algorithmic}. 

\subsection{Proof Ideas for Theorem~\ref{thm:size} }
Finally, we turn to characterizing the size of the core which is left after running the iterative pruning algorithm. 
The key is to characterize the expected number of planted edges in the core, $C_n$, or equivalently, $\prob{e \in C_n \mid H^*}$ for a given planted edge $e$. 

 We first show that $\prob{e \in C_n \mid H^* } \le (1-\rho)^2+o(1).$ Here is the high-level idea. We construct the two-sided local neighborhood rooted at the planted edge $e$ of depth $d$, which consists of all alternating paths of length $d+1$ starting from $e.$ When $d=o(\log n)$, the local neighborhood at each side can be coupled with a Galton-Watson tree, where the new edges branching out at each layer alternate between $\Pois(\lambda)$ blue edges and fixed $k$ red edges. Let $\rho_d$ denote the probability that the Galton-Watson tree dies out in $d$ depth. Then both sides of the tree do not die out within depth $d$ with probability $(1-\rho_d)^2$. 
We then argue that if $e \in C_n$, both sides of the tree cannot die out for any finite $d$; otherwise, edge $e$ would be removed by the iterative pruning algorithm. Thus, $\prob{e \in C_n} \le (1-\rho_d)^2 + o(1) $ for any constant $d$. Finally, choosing $d$ to slowly grow with $n$, we show $\rho_d \to \rho$ and complete the proof of 
$\prob{e \in C_n \mid H^* } \le (1-\rho)^2+o(1).$

Next, we prove that $\prob{e \in C_n} \ge (1-\rho)^2-o(1)$. Note that this is trivially true when $k\lambda\le 1$ since $\rho=1$. Thus it suffices to focus on $k\lambda>1$. The high-level idea is as follows.  We first claim that if $e$ belongs to an alternating cycle, then $e \in C_n$. Thus, it suffices to lower-bound the probability that $e$ belongs to an alternating cycle. To do this, we first reserve a set of $\gamma n$ red edges. Then we build the two-sided tree rooted at $e$ as we did in the impossibility proof of almost exact recovery. 
Finally, we create an alternating cycle by connecting both sides of the tree to the same reserved red edge. With probability approximately $(1-\rho)^2,$ both sides of the tree do not die out. When this happens, we can grow the tree until both sides contain $\sqrt{n \log n}$ leaf vertices. Then with high probability, there exists a reserved red edge whose endpoints are connected to the two sides of the tree via blue edges, forming an alternating cycle. Hence, the probability that $e$ belongs to an alternating cycle is at least $(1-\rho)^2-o(1)$. Interestingly, this cycle construction differs from the one used in the impossibility proof for almost exact recovery. Here, we build a single large two-sided tree of size $\sqrt{n\log n}$ and connect the two sides via a three-edge (blue-red-blue) sprinkling. In comparison, the impossibility proof for almost exact recovery builds $\Theta(n)$ many small two-sided trees of size $\Theta(1)$, which are then connected using a five-edge (blue-red-blue-red-blue) sprinkling.

To prove exact recovery when $\lambda = o(1),$ we  extend the notion of an alternating cycle to an ``almost'' alternating cycle, where edges alternate in color except at the transition between the last and first edges.  We then show that if graph $G$ contains no such ``almost'' alternating cycle, the core $C_n$ must be empty. Finally, we establish that if $\lambda  =o(1)$, then with high probability, the graph does not contain any ``almost'' alternating cycle and hence the core $C_n$ is empty.

\section{Proofs for exact recovery}\label{app:exact}
\begin{proof}[Proof of~\prettyref{thm:exact_recovery}] (Positive direction). Suppose $\lambda=o(1)$. Then 
\begin{align}
\prob{\bigcup_{H \in \mathcal{H}, H \neq H^*} \{ H \subseteq G\}  \mid H^*}
& \leq  \sum_{t \ge 2} \sum_{H \in \calH: \left|H \triangle H^*\right|=2t }
\prob{H \subset G \mid H^*}
\nonumber \\ 
& \overset{(a)}{\leq}  \sum_{t\geq 2}(kn)^t \left(\frac{\lambda}{n}\right)^t
= \frac{(k\lambda)^2}{1-k\lambda}=o(1), 
\label{eq:exact_positve}
\end{align}
where the inequality $(a)$ follows from~\prettyref{eq:M_ell_bound} in~\prettyref{lmm:enumeration}.

(Negative direction). First, observe that for every edge in $H^*$, there are at most $2k$ vertices that are within distance $1$ from either of its endpoints in $H^*$. Consequently, there are at most $2k^2$ planted edges with one of these vertices as an endpoint. Thus, there must exist a collection of $\lceil \frac{kn}{4k^2}\rceil$ planted edges such that (1) no two of these edges share a vertex; and (2) no two of these edges have a planted edge between their vertices. Let $(v_1,v_1'),(v_2,v_2'),...,(v_{\lceil \frac{n}{4k}\rceil},v_{\lceil \frac{n}{4k}\rceil}')$ be any such collection of planted edges. 
Given any $1\le i<j\le \lceil \frac{n}{4k}\rceil$, let $\calE_{ij}$ denote the event that the graph $G$ has an edge between $v_i$ and $v_j$ and an edge between $v'_i$ and $v'_j$. If $\calE_{ij}$ holds, then $(v_i,v_i',v'_j,v_j)$ forms an alternating cycle of length $4.$ Replacing the edges between $(v_i,v'_i)$ and $(v_j,v'_j)$ in $H^*$  with the edges between $(v_i,v_j)$ and $(v'_i,v'_j)$ yields another $k$-factor $H \neq H^*$ contained in the graph $G$. Therefore, it remains to prove $\prob{\cup_{i<j}\calE_{ij}}=\Omega(1).$
By construction, $\prob{\calE_{ij}}=\lambda^2/n^2$.  Furthermore, $\{(v_i,v_j),(v'_i,v'_j)\}$ is disjoint from $\{(v_{i'},v_{j'}),(v'_{i'},v'_{j'})\}$ for all $(i',j')\ne (i,j)$, so the events $\calE_{ij}$ are mutually independent for all $i<j$. Hence,  
$$
\prob{\cup_{i<j} \calE_{ij}}
= 1- \prob{\cap_{i<j} \calE^c_{ij}}
= 1- \prod_{i<j} \prob{\calE^c_{ij}}
= 1-(1-\lambda^2/n^2)^{\lceil \frac{n}{4k}\rceil(\lceil \frac{n}{4k}\rceil-1)/2}=\Omega(1),
$$
where the last equality holds by the assumption that $\lambda =\Omega(1)$
and $k$ is a fixed constant. 
\end{proof}

\section{Proofs for almost exact recovery}

We first present the proof for the positive direction of almost exact recovery.

\begin{proof}[proof of~\prettyref{thm:almost_perfect}] We apply the first-moment method following~\cite{Ding2023}. In particular, 
\begin{align*}
\mathbb{P}\left\{\ell\left(\widehat{H},H^*\right)\geq 2\beta  \mid H^* \right\}
& \leq  \mathbb{P}\left\{\exists H \in \calH, \left|H \triangle H^*\right| \ge \beta kn: H \subset G \mid H^* \right\}\\
& \leq   \sum_{t\geq \beta k n/2}^{kn/2}
\sum_{H \in \calH: \left|H \triangle H^*\right|=2t}
\mathbb{P}\left\{ H \subset G \mid H^* \right\}\\
& \stackrel{(a)}\leq  \sum_{t\geq \beta k n/2}^{kn/2}(kn)^t e^{-t(t-1)/(kn)}\left(\frac{\lambda}{n}\right)^t\\
& \leq  e^{1/2}\sum_{t\geq \beta k n/2}
\left( (1+\epsilon) e^{-\beta/2} \right)^{t} \leq  e^{1/2} \frac{e^{-\beta^2 kn/8}}{1-e^{-\beta/4}},
\end{align*}
where step (a) follows from~\prettyref{eq:M_ell_bound} in~\prettyref{lmm:enumeration},
and the last inequality holds for all $\beta \ge 4\log (1+\epsilon)$, since 
\[(1+\epsilon) e^{-\beta/2} = (1+\epsilon) e^{-\beta/4} \cdot e^{-\beta/4} \leq  (1+\epsilon) e^{-4 \log(1+\epsilon)/4} \cdot e^{-\beta/4} =  e^{-\beta/4}.\]
Therefore,
\begin{align*}
&\expect{\ell\left(\widehat{H},H^*\right) \mid H^* }\\
& \leq\expect{\ell\left(\widehat{H},H^*\right) \indc{\ell\left(\widehat{M},M^*\right) < 2\beta } \mid H^*} + \expect{\ell\left(\widehat{H},H^*\right) \indc{\ell\left(\widehat{H},H^*\right) \ge 2\beta } \mid H^*} \\
& \le 2 \beta + 2 \prob{\ell\left(\widehat{H},H^*\right) \ge 2\beta \mid H^*} \le 2\beta +  2 e^{1/2}\frac{e^{-\beta^2 kn/8}}{1-e^{-\beta/4}}.
\end{align*}
By choosing $\beta$ according to~\eqref{eq:beta_def}
so that $e^{-\beta^2 k n/8} \le \beta^2 /8$ and $1-e^{-\beta/4} \ge \beta/8$ 
we have
\begin{align*}
\expect{\ell\left(\widehat{M},M^*\right) \mid H^*} \leq  2\beta +  2 e^{1/2}\frac{\beta^2/8}{\beta/8} 
\le 6 \beta. 
\end{align*}
\end{proof}

Next, we present the  proof of \prettyref{thm:impossibility}, the negative direction of almost exact recovery. In view of~\prettyref{eq:posterior-tail}, it suffices to consider the estimator $\tilde H$ which is sampled from the posterior distribution $\mu_G$. 
In order to get a probabilistic lower-bound on $\ell(H^*,\tilde{H})$, we define the sets of good and bad solutions respectively as
\begin{align*}
\Mgood (G) = & ~ \left\{ H \in \calH: \ell( H, H^*) < \frac{2 \delta}{k}, H \subset G \right\} \\
\Mbad (G) = & ~ \left\{ H \in \calH: \ell(H,H^*)  \geq \frac{2 \delta}{k}, H \subset G \right\}.
\end{align*}
The value $\frac{2\delta}{k}$ is chosen since $\ell(H, H^*) = \frac{2\delta}{k}$ means that $|H^* \Delta H| = \frac{2\delta}{k} \cdot \frac{nk}{2} = \delta n$ (using \eqref{eq:risk}).
Recall that the posterior distribution $\mu_G$ is the uniform distribution over all possible $k$-factors contained in the observed graph $G.$ Therefore, by the definition of $\tilde H$, we have 
\begin{align}
\prob{ \ell(\tilde H,H^*) <  \frac{2 \delta}{k}  \mid G, H^*} = \frac{|\Mgood |}{ |\Mgood| + |\Mbad|} . \label{eq:posterior_error_bound}
\end{align}

Next, we bound $\left| \Mgood \right|$ and $\left| \Mbad \right|$.
\begin{lemma}
\label{lmm:Mgood}	
Assume that~\prettyref{eq:impossibility} holds for some arbitrary constant $\epsilon>0$. 
Then for any $\delta>0$, with probability at least $1-(k\lambda)^{  - \frac{\delta n}{2}  }$, 
	\begin{equation}
	\left| \Mgood \right|\leq  \frac{k\lambda}{k\lambda -1} (k\lambda)^{\delta n}.
\label{eq:Mgood}
	\end{equation}
conditioned on any realization of $H^*$.
\end{lemma}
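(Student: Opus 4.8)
The plan is to use a first-moment (Markov inequality) argument, exactly as in the proof of Theorem~\ref{thm:almost_perfect}, but now summing only over the relevant range $1 \le t \le \delta n/2$ of Hamming half-distances, since $H \in \Mgood$ means $|H \triangle H^*| = 2t < \delta n$. First I would write
\[
\expect{|\Mgood| \mid H^*} \le \sum_{t=1}^{\lceil \delta n/2 \rceil} \left|\{H \in \calH : |H \triangle H^*| = 2t\}\right| \cdot \prob{H \subset G \mid H^*},
\]
and then invoke Lemma~\ref{lmm:enumeration} to bound the number of $k$-factors at Hamming distance $2t$ by $(kn)^t e^{-t(t-1)/(kn)} \le (kn)^t$, while $\prob{H \subset G \mid H^*} = (\lambda/n)^t$ since the $t$ blue edges of $H \setminus H^*$ are each present independently with probability $\lambda/n$. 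This yields $\expect{|\Mgood| \mid H^*} \le \sum_{t=1}^{\lceil \delta n/2 \rceil} (k\lambda)^t$, a geometric sum. Under~\prettyref{eq:impossibility}, $k\lambda \ge 1+\epsilon > 1$, so the sum is dominated by its largest term: $\sum_{t=1}^{\lceil \delta n/2\rceil}(k\lambda)^t \le \frac{k\lambda}{k\lambda-1}(k\lambda)^{\delta n/2}$ (being slightly careful about the ceiling, which only costs a bounded multiplicative factor one can absorb, or one restates the exponent as $\lceil \delta n/2\rceil$).

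Then I would apply Markov's inequality: $\prob{|\Mgood| \ge (k\lambda)^{\delta n/2} \cdot \expect{|\Mgood| \mid H^*}/(k\lambda)^{\delta n/2} \cdot \ldots}$— more precisely, I want the event $\{|\Mgood| > \frac{k\lambda}{k\lambda-1}(k\lambda)^{\delta n}\}$ to have probability at most $(k\lambda)^{-\delta n/2}$. Since $\expect{|\Mgood| \mid H^*} \le \frac{k\lambda}{k\lambda-1}(k\lambda)^{\delta n/2}$, Markov gives
\[
\prob{|\Mgood| > \frac{k\lambda}{k\lambda-1}(k\lambda)^{\delta n} \;\Big|\; H^*} \le \frac{\expect{|\Mgood| \mid H^*}}{\frac{k\lambda}{k\lambda-1}(k\lambda)^{\delta n}} \le \frac{(k\lambda)^{\delta n/2}}{(k\lambda)^{\delta n}} = (k\lambda)^{-\delta n/2},
\]
which is exactly the claimed bound (up to reconciling whether the exponent in the probability is $\delta n/2$ or $\lceil \delta n \rceil /2$; the paper writes $\frac{\delta n}{2}$, and the geometric-sum bound delivers precisely this). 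The bound holds conditionally on any realization of $H^*$ because both the enumeration bound in Lemma~\ref{lmm:enumeration} and the edge-probability computation are uniform over $H^*$.

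I do not expect any genuine obstacle here — this is a routine first-moment bound and the only thing requiring mild care is bookkeeping of the ceiling $\lceil \delta n/2 \rceil$ versus $\delta n/2$ in the exponents, and making sure the geometric-series prefactor $\frac{k\lambda}{k\lambda-1}$ is threaded consistently through the Markov step so that the final failure probability comes out as stated. The substantive work of the paper lies entirely in the complementary lower bound on $|\Mbad|$ (the two-stage tree construction plus sprinkling), not in this lemma.
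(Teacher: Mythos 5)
Your proposal is correct and follows essentially the same route as the paper: bound $\expect{|\Mgood| \mid H^*}$ via the enumeration bound of \prettyref{lmm:enumeration} together with $\prob{H\subset G\mid H^*}=(\lambda/n)^t$, sum the resulting geometric series to get $\frac{k\lambda}{k\lambda-1}(k\lambda)^{\delta n/2}$, and finish with Markov's inequality at threshold $\frac{k\lambda}{k\lambda-1}(k\lambda)^{\delta n}$. The ceiling/prefactor bookkeeping you flag is indeed immaterial, exactly as the paper's own one-line computation shows.
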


\begin{lemma}
\label{lmm:Mbad}
Suppose \prettyref{eq:impossibility} holds for some arbitrary constant $\epsilon>0$. 
There exist constants $c_0$ and $c_1$ that only depend on $\epsilon, k$, 
such that for all $\delta \leq c_0$, 
with probability at least $1-e^{-\Omega(n)}$, 
		\begin{equation}
	\left| \Mbad  \right| \geq e^{c_1 n}.
	\label{eq:Mbad}
	\end{equation}
 conditioned on any realization of $H^*$.
\end{lemma}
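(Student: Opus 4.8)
\textbf{Proof proposal for Lemma~\ref{lmm:Mbad}.} The plan is to exhibit a set of $e^{\Omega(n)}$ distinct $k$-factors in $G$ whose Hamming distance to $H^*$ is at least $\delta n$, by constructing exponentially many disjoint-supported alternating cycles of length $\Omega(n)$ and XOR-ing them (or sub-collections of them) into $H^*$. Recall that if $C$ is any union of vertex-disjoint alternating circuits in $G$ then $C \oplus H^*$ is again a $k$-factor contained in $G$, and $|(C\oplus H^*)\triangle H^*| = |C|$; so it suffices to produce $e^{\Omega(n)}$ distinct such $C$ with $|C|\ge \delta n$. Following the proof-overview, I would carry out the two-stage construction: (i) reserve $\Theta(n)$ edge-disjoint red edges of $H^*$ chosen independently of $G_0$, split into left/right families $E_L^\star, E_R^\star$ each with a tree-facing and a linking endpoint; (ii) on the remaining graph, run the neighborhood-exploration process to build $\Theta(n)$ two-sided alternating trees $(L_i,R_i)$, tracking ``full-branching'' vertices so that each side is stochastically lower-bounded by a binomial branching process with mean offspring $\lambda k \ge 1+\epsilon > 1$; since this process is supercritical, each tree independently reaches a prescribed constant size on both sides with probability bounded below by a constant.

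Next I would discard trees that fail to reach the target size, keeping $m = \Theta(n)$ good trees with high probability (a concentration/Chernoff bound on the number of surviving trees). Among these, select a sub-collection that are blue-connected on each side to at least $d$ tree-facing endpoints of $E_L^\star$ resp.\ $E_R^\star$, with the endpoint-sets associated to distinct trees made disjoint; a counting argument shows $\Theta(n)$ trees survive this pruning for a suitable constant $d$. Then every ordered pair $(L_i,R_j)$ is independently connected by a five-edge construction with probability at least $q = 1-(1-p)^{d^2} = \Theta(1/n)$. Form the auxiliary bipartite graph on $m+m$ nodes with a perfect red matching plus independent blue edges of probability $q$; this is exactly a (dense enough, since $mq = \Theta(1)\cdot m/n \cdot$const — one must check $mq$ is a large constant by taking the reserved fraction large) two-colored bipartite graph with a planted perfect matching, and the results of~\cite{Ding2023} give $e^{\Omega(m)} = e^{\Omega(n)}$ alternating cycles of length $\Omega(m)=\Omega(n)$ in it. Each such auxiliary cycle lifts to an alternating cycle of length $\Omega(n)$ in $G$ through the trees and five-edge constructions, and distinct auxiliary cycles give distinct $C$, hence distinct $k$-factors; choosing $\delta = c_0$ small enough that $\delta n$ is below the guaranteed cycle length yields~\prettyref{eq:Mbad} with $|\Mbad|\ge e^{c_1 n}$.

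The main obstacle is making the branching-process comparison rigorous in the $k\ge2$, unipartite setting: the exploration must condition on the ``full-branching'' property (all $k$ red neighbors of an odd-layer vertex previously unvisited), and one must show this conditioning costs only a $1-o(1)$ factor while the trees stay small, so that the effective offspring mean is still $\lambda k$ and the survival probability is a genuine constant; simultaneously one needs the trees, the reserved edges, and the sprinkling edges to be probabilistically independent (handled by revealing $G_0$ in stages — exploration edges first, reserved edges and linking edges untouched, then sprinkling). A secondary technical point is the bookkeeping ensuring no two five-edge constructions in a single cycle reuse a reserved edge, which is what the disjoint-endpoint-association step buys us, together with verifying the auxiliary-graph edge probability $q$ times $m$ is a sufficiently large constant so that~\cite{Ding2023}'s long-cycle count applies. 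Everything else — the Chernoff bounds for the number of surviving trees, the lift of cycles, and the distinctness of the resulting $k$-factors — is routine once these independence and constant-probability statements are in place.
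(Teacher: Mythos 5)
Your proposal is correct and follows essentially the same route as the paper's proof of \prettyref{lmm:Mbad}: reserve $\Theta(n)$ vertex-disjoint red edges, grow $\Theta(n)$ two-sided alternating trees whose growth is coupled to a supercritical branching process with offspring mean $\ge k\lambda(1-O(\gamma))>1$, select collision-free trees blue-connected to $d$ tree-facing endpoints on each side, reduce via the five-edge sprinkling to a bi-colored bipartite graph with a perfect red matching and independent blue edges of probability $\Theta(d^2/n)$, invoke the long-cycle count of \cite{Ding2023}, and XOR the resulting length-$\Omega(n)$ alternating cycles into $H^*$ to obtain $e^{c_1 n}$ distinct elements of $\Mbad$. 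The technical points you flag (full-branching bookkeeping, staged edge revelation for independence, disjoint endpoint association, and checking that the auxiliary edge probability times the number of surviving trees is a large enough constant) are exactly the ones the paper resolves quantitatively through its choices of $\gamma$, $\ell$, $d$, and $c$.
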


\begin{proof}[Proof of~\prettyref{thm:impossibility}]
Observe that we can assume $\lambda k = 1+\epsilon$ without loss of generality; any estimator that works for $\lambda k>1+\epsilon$ can be converted to an estimator for $\lambda k=1+\epsilon$ by adding extra edges to the graph before computing the estimator. 

Given the above two lemmas, Theorem \ref{thm:impossibility} readily follows. Indeed, combining
\prettyref{lmm:Mgood} and \prettyref{lmm:Mbad} and choosing 
$\delta =\min\{c_0, c_1/(2\log(k\lambda))\}$ so that $(k\lambda)^{\delta n} \leq e^{c_1 n/2} $, we obtain
 $$
 \frac{|\Mgood |}{ |\Mgood| + |\Mbad| } \leq
 \frac{|\Mgood |}{  |\Mbad| }
 \le \frac{k\lambda}{k\lambda-1}\cdot (k\lambda)^{\delta n} e^{-c_1n} 
 \leq \frac{k\lambda}{k\lambda-1} e^{-c_1n/2},
 $$ 
 with probability $1- e^{-\Omega(n)}$. It then follows from \prettyref{eq:posterior-tail} and \prettyref{eq:posterior_error_bound} that for any estimator $\hat{H}$, 
 \begin{align*}
    \left[\prob{\ell(H^*, \hat{H}) < \frac{\delta}{k} } \right]^2 &\leq \prob{\ell(H^*, \tilde{H}) < \frac{2\delta}{k} }\\
    &= \mathbb{E}_{G, H^*} \left[ \prob{\ell(H^*, \tilde{H}) < \frac{2\delta}{k} \mid G, H^* }\right]\\
    &= \mathbb{E}_{G, H^*} \left[\frac{|\Mgood |}{ |\Mgood| + |\Mbad| }   \right]\\
    &\leq \left(1 - e^{-\Omega(n)}\right) \frac{k\lambda}{k\lambda-1} e^{-c_1n/2} + e^{-\Omega(n)} = e^{-\Omega(n) }.
 \end{align*}

 Finally, we have
$$
\expect{ \ell(H^*, \hat{H})} 
\ge \prob{ \ell(H^*, \hat{H}) \ge \frac{\delta}{k}} \cdot \frac{\delta}{k}
\ge \left( 1- e^{-\Omega(n)} \right) 
\frac{\delta}{k} \ge \frac{\delta}{2k}.
$$
Taking $\epsilon' = \frac{\delta}{k}$ completes the proof.
\end{proof}
\begin{remark}
The above proof also shows that with probability at least $1- e^{-\Omega(n)}$, at least  $(1- \frac{k\lambda}{k\lambda-1} e^{-c_1n/2})$ fraction of $k$-factors in graph $G$ satisfy 
$\ell(H,H^*) \ge \frac{2\delta}{k}$. 
\end{remark}
\begin{proof}[Proof of~\prettyref{lmm:Mgood}] 
Applying~\prettyref{lmm:enumeration}, we get that 
\begin{align*}
\mathbb{E}[\left|\Mgood\right| \mid  H^*] &= \sum_{t < \delta n} \left|\{H \in \mathcal{H} : |H \Delta H^*| = t\} \right| \cdot \left(\frac{\lambda}{n} \right)^{t/2}\\
&\leq \sum_{t < \delta n} (kn)^{t/2} \cdot \left(\frac{\lambda}{n} \right)^{t/2}= \sum_{t < \frac{\delta n}{2}} (k\lambda)^{t}\\
&\leq \frac{(k\lambda)^{\frac{\delta n}{2}+1}-1}{k\lambda -1} \leq \frac{k\lambda}{k\lambda -1} (k\lambda)^{\frac{\delta n}{2}}.
\end{align*}
The conclusion follows by applying Markov's inequality.
\end{proof}

In order to prove \prettyref{lmm:Mbad}, we will provide an algorithm for constructing a large number of $k$-factors in $|\Mbad|$. The initialization step, defined in Algorithm \ref{alg:matching}, reserves a set of vertex-disjoint edges from the graph $H^*$. These reserved edges will be used to connect the trees we will find into long cycles in the second stage.  Note that the algorithm (and the others that follow) require knowledge of $H^*$, so this is meant as a construction by the analyst rather than a procedure of the estimator. 

\begin{algorithm}[b]
\caption{Reserve Edges}\label{alg:matching}
\begin{algorithmic}[1]
\Statex{\bfseries Input:}Graph $H^*$ on $n$ vertices, $m \in \mathbb{N}$
\Statex{\bfseries Output:}A set $E^\star$ of $m$ vertex-disjoint red edges of $G$, available vertices $\mathcal{A}$, and full-branching vertices $\mathcal{F}$
\State Let $E^{\star} = \emptyset$, $S = E(H^*)$.
\For{$i \in \{1, 2, \dots, m\}$}
        \State Choose an arbitrary edge $e$ from $S$ and add it to $E^{\star} $.
        \State Remove $e$ and all edges incident to either endpoint of $e$ from $S$.
\EndFor
\State Let $V_1$ be the set of endpoints of edges in $E^{\star}$. Let $V_2$ be the set of vertices  adjacent to a vertex in $V_1$ via a red edge.
\State Let $\mathcal{A} = [n] \setminus V_1$ be the set of \emph{available} vertices. Let $\mathcal{F} = [n] \setminus (V_1 \cup V_2)$ be the set of \emph{full-branching} vertices.
\end{algorithmic}
\end{algorithm}
Let us explain the definitions of $\mathcal{A}$ and $\mathcal{F}$ in the last step. These two sets will be updated in the tree-construction stage. The set $\mathcal{A}$ will remain the set of unreserved vertices that have not appeared as a vertex in any tree, while $\mathcal{F}$ will remain the set of unreserved vertices whose incident edges have not been inspected in the construction. Crucially, our initialization ensures that \emph{each vertex in $\mathcal{F}$ has exactly $k$ planted neighbors in $\mathcal{A}$} and hence the name of ``full-branching''. This fact will remain true throughout our construction. 

Algorithm \ref{alg:tree-construction} constructs two-sided alternating trees, according to the following definition; see also Figure \ref{fig:tree}.
\begin{definition}
A two-sided alternating tree, denoted $(L,R)$, contains a red edge connecting the roots of $L$ and $R$. The subtrees $L$ and $R$ alternate blue edges and red edges on all paths from the roots to the leaves. We also say that a vertex is blue (resp. red) if the edge from it to its parent is blue (resp. red).
\end{definition}

The algorithm constructs trees via a breadth-first exploration. As such, a \emph{queue} data structure is employed to ensure the correct visitation order. Generically, a queue is a collection of objects that can be added to (via the $\texttt{push}$ operation) or removed from (via the $\texttt{pop}$ operation). A queue obeys the ``first in first out'' rule with respect to adding and removing.

\begin{algorithm}[t]
\caption{Tree Construction} \label{alg:tree-construction}
\begin{algorithmic}[1]
\Statex{\bfseries Input:} Graph $G$ and $k$-factor $H^*$ on $n$ vertices, 
available vertices $\mathcal{A}$, full-branching vertices $\mathcal{F}$, tree count parameter $\gamma$, size parameter $\ell \in \mathbb{N}$.
\Statex{\bfseries Output:} A set $\mathcal{T}$ of two-sided trees where each side has at least $2\ell$ vertices.
\State Set $\mathcal{T} = \emptyset$. 
\For{$t \in \left\{1, 2, \dots, K := \frac{\gamma n}{2(2\ell+k)k}  \right\}$}
\State Select an arbitrary planted edge $(u_0,u_0')$ where $u_0,u_0'\in \mathcal{A}$. If no such edge exists, return $\texttt{FAIL}$. 
\State Initialize $T$ to be a two-sided tree containing only the center edge $(u_0,u_0')$. 
\State Remove $u_0$ and $u_0'$ from both $\mathcal{A}$ and $\mathcal{F}$. Remove all the planted neighbors of $u_0$ and $u_0'$ from $\mathcal{F}$. 
\State (Grow the left tree rooted at $u_0$.) Initialize the leaf queue to be $\mathcal{L} \gets \{u_0\}$, and the cumulative size to be $s \gets 1$. 
\label{step.root.init}
\While{$\mathcal{L} \neq \emptyset$ and $s < 2\ell$}
\State Let $u \gets \mathcal{L}.\texttt{pop}$. 
\State (Find the children of $u$.) Let $\mathcal{C}_u \gets \{v \in \mathcal{F} : (u,v) \text{ is an unplanted edge}\}$; i.e., $\mathcal{C}_u$ is the set of all full-branching unplanted neighbors of $u$. 
\For{$v \in \mathcal{C}_u$}
\If{$s\geq 2\ell$}
\State go to line \ref{step.right.tree}.
\EndIf
\If{$v \in \mathcal{F}$}
\State Attach $v$ to $T$ as a child of $u$. \label{step:check-full-branching}
\State Let $\mathcal{C}_v$ denote the planted neighbors of $v$.
\State Attach $\mathcal{C}_v$ to $T$ as children of $v$ (grandchildren of $u$). 
\State Set $s \gets s + k+1$, and update $\mathcal{L}$ as $\mathcal{L}.\texttt{push}(\mathcal{C}_v)$ 
\State\label{step:remove}Remove $v$ and $\mathcal{C}_v$ from $\calF$ and $\calA$. Remove all planted neighbors of $\mathcal{C}_v$ from $\calF$. 
\EndIf
\EndFor
\EndWhile\label{step.finish.left.growth}
\label{step.right.tree} \If{$s \geq 2\ell$}
\State Grow the right tree rooted at $u_0'$, analogously to lines \ref{step.root.init}-\ref{step.finish.left.growth}, initializing the leaf queue to be $\mathcal{L} \gets \{u_0'\}$. 
\State If the right tree also reaches a size of at least $2\ell$, then set $\mathcal{T} \gets \mathcal{T} \cup \{T\}$. 
\EndIf
\EndFor
\State Return $\mathcal{T}$. 
\end{algorithmic}
\end{algorithm}

Our goal is to connect the trees into cycles. To aid our analysis, the trees will be connected by a five-edge construction, as in Figure \ref{fig:five-edge-construction}. For the purposes of the construction, we say that a tree $L_i$ is \emph{blue-connected} to a tree-facing endpoint $v$ in $E_L^{\star}$ if some red vertex in $L_i$ is connected to $v$ by a blue edge. Algorithm \ref{alg:cycle-construction} then constructs an auxiliary bipartite graph which, at a high level, keeps track of the trees that are connected by a five-edge construction. We will show that the bipartite graph is well-connected, and hence has many long alternating cycles, which in turn translate into many long alternating cycles in $G$. Crucially, the bipartite graph will need to have independent blue edges, which correspond to the blue edges which connect linking endpoints. To ensure the independence, we will need to avoid \emph{collisions}, where two trees are blue-connected to the same tree-facing endpoint. These collisions are avoided in Algorithm \ref{alg:cycle-construction} by considering the trees sequentially, and only forming blue connections to unused tree-facing endpoints (see Figure \ref{fig:tree-bookkeeping}).

\begin{algorithm}[t]
\caption{Cycle Construction} \label{alg:cycle-construction}
\begin{algorithmic}[1]
\Statex{\bfseries Input:} Graph $G$ on $n$ vertices (with red subgraph $H^*$), tree count parameter $\gamma$, tree size parameter $\ell \in \mathbb{N}$, degree parameter $d$
\Statex{\bfseries Output:} A set of alternating cycles $\mathcal{C}$ on $G$
\smallskip
\State Set $\gamma = \tfrac{\epsilon}{10(1+\epsilon)}$. Apply Algorithm \ref{alg:matching} to input $(H^*, 2\gamma n/k)$, obtaining the set of reserved edges $E^{\star}$, the set of available vertices $\mathcal{A}$, and the set of full-branching vertices $\mathcal{F}$. We assume $|E^*|$ is even.
\State Let $\mathcal{T} = (L_i, R_i)_{i=1}^{K_1}$ be the output of Algorithm \ref{alg:tree-construction} on input $(G, 
\mathcal{A}, \mathcal{F}, \gamma, \ell)$.
\State Randomly partition $E^{\star}$ into two equally-sized sets $(E^{\star}_L, E^{\star}_R)$. 
For each $(u,v) \in E^{\star}$ with $u < v$, designate $u$ as the ``tree-facing'' vertex and designate $v$ as the ``linking'' vertex.
\State Initialize an empty (bipartite) graph $\overline{G}$.
\For{$i \in [K_1]$}
\If{$L_i$ is blue-connected to at least $d$ unmarked tree-facing endpoints among $E_L^{\star}$ and the same is true for $R_i$ with respect to $E_R^{\star}$}\label{step:collision}
\State Let the first $d$ of these edges be denoted $\mathcal{E}(L_i) \subset E_L^{\star}$ and $\mathcal{E}(R_i) \subset E_R^{\star}$. 
\State Mark all edges among $\mathcal{E}(L_i) \cup \mathcal{E}(R_i)$.
\State Include $i$ as a vertex on both sides of $\overline{G}$, and connect them by a red edge.
\EndIf
\EndFor
\For{$i \in [K_1]$}
\For{$j \in [K_1]$}
\If{both $i$ and $j$ are vertices in $\overline{G}$, and some linking endpoint in $\mathcal{E}(L_i)$ is connected to some linking endpoint in $\mathcal{E}(R_j)$ by a blue edge}
Connect $i$ and $j$ by a blue edge in $\overline{G}$. \;
\EndIf
\EndFor
\EndFor
\State Find the set of alternating cycles in $\overline{G}$, and return the corresponding set of alternating cycles in $G$. 
\end{algorithmic}
\end{algorithm}

\FloatBarrier

\begin{figure}[t]
    \centering
    \includegraphics[scale=0.25]{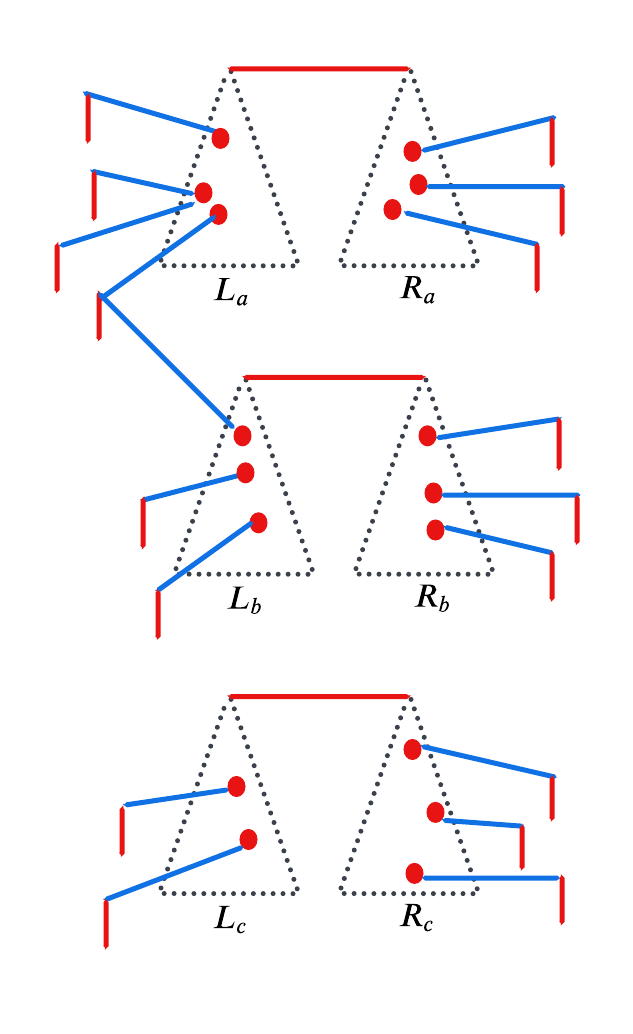}
    \caption{Trees are considered sequentially to avoid collisions, in alphabetical order. Here $d = 3$. The vertex $a$ is added to the bipartite graph $\overline{G}$ by~\prettyref{alg:cycle-construction}, while $b$ and $c$ are not. While $L_b$ and $R_b$ each connect to $3$ tree-facing vertices, one of the vertices connected to $L_b$ is already connected to $L_a$, forming a collision. The vertex $c$ is not included because $L_c$ is blue-connected to only two tree-facing vertices.}
    \label{fig:tree-bookkeeping}
\end{figure}

\subsection{Proof of~\prettyref{lmm:Mbad}}

\subsubsection{Tree construction}
Our first goal is to characterize the tree construction, ensuring that Algorithm \ref{alg:tree-construction} produces sufficiently many trees. 
\begin{proposition}\label{prop:tree.facts}
    The tree construction process ensures that: 
    \begin{enumerate}[(a)]
        \item Each two-sided tree contains at most $4\ell+2k$ vertices, with $2\ell+k$ on each side. 
        \item For each two-sided tree $T_k = (L_k, R_k)$ for which both sides contain at least $2\ell$ vertices, both sides contain at least $\ell$ red vertices. 
        \item Throughout the construction, the number of full-branching vertices satisfies $|\mathcal{F}|\geq n-5\gamma n$. 
    \end{enumerate}
\end{proposition}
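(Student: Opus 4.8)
The plan is to prove all three parts by directly inspecting the bookkeeping that Algorithms~\ref{alg:matching} and~\ref{alg:tree-construction} maintain, rather than by any probabilistic argument. For part~(a), I would track the counter $s$ while one side of a tree is grown: it is initialized to $1$ (the root), and on line~\ref{step:remove} it is incremented by exactly $k+1$ each time a blue vertex $v$ together with its $k$ red children $\mathcal{C}_v$ is attached, so $s$ always equals the current number of vertices on that side. Since the inner loop halts the first time $s\ge 2\ell$ and $s$ only takes values in $\{1+j(k+1):j\ge 0\}$, the first such value is at most $2\ell+k$; hence each side of every constructed tree has at most $2\ell+k$ vertices and a two-sided tree has at most $4\ell+2k$. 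For part~(b), fix $(L,R)\in\mathcal{T}$ and look at $L$, which has between $2\ell$ and $2\ell+k$ vertices, partitioned into the root, the blue vertices, and the red vertices; by construction every blue vertex carries exactly $k$ red children and every red vertex is such a child, so if $b$ is the number of blue vertices then $L$ has $kb$ red vertices and $1+b(k+1)$ vertices total. From $1+b(k+1)\ge 2\ell$ one gets $b\ge\lceil(2\ell-1)/(k+1)\rceil$, and an elementary case check (using integrality of $b$ when $k=1$) yields $kb\ge\ell$; the same argument applies to $R$.

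Part~(c) is the substantive one. Since $\mathcal{F}$ is monotone decreasing throughout both algorithms, it suffices to bound the total number of vertices ever deleted from $\mathcal{F}$ by $5\gamma n$. Algorithm~\ref{alg:matching} deletes exactly $V_1\cup V_2=V_2$, and since $|V_1|=2m=4\gamma n/k$ and every vertex has exactly $k$ red neighbours in the $k$-factor, $|V_2|\le 2mk=4\gamma n$. In Algorithm~\ref{alg:tree-construction} the only deletions from $\mathcal{F}$ happen when the two roots $u_0,u_0'$ of a tree are processed and on line~\ref{step:remove}. The key observation is that a blue vertex $v$ never triggers the deletion of a vertex outside its tree: the neighbours of $v$ that get removed are exactly $\mathcal{C}_v$, all of which are themselves attached to the tree. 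Consequently, every vertex deleted from $\mathcal{F}$ by Algorithm~\ref{alg:tree-construction} is either (i)~a tree vertex, or (ii)~a red neighbour \emph{outside} the tree of a root vertex or a red tree vertex, and each such root or red vertex has at most $k-1$ red neighbours outside its tree. By part~(a) the total number of tree vertices over all $K$ trees is at most $K(4\ell+2k)=\gamma n/k$, so type~(i) contributes at most $\gamma n/k$ and type~(ii) at most $(k-1)\gamma n/k$, totalling at most $\gamma n$ (and if Algorithm~\ref{alg:tree-construction} returns \texttt{FAIL} or runs out of usable edges early, even fewer deletions occur). Adding the $4\gamma n$ from the reserve step gives $|\mathcal{F}|\ge n-5\gamma n$ at every point of the construction.

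The hard part will be the charging argument in part~(c): one must check that \emph{every} deletion from $\mathcal{F}$—the explicit removals of $u_0,u_0'$ and of ``planted neighbours of $u_0,u_0'$'', and on line~\ref{step:remove} the removals of $v$, of $\mathcal{C}_v$, and of ``planted neighbours of $\mathcal{C}_v$''—is paid for either by a tree vertex or by a spillover of at most $k-1$ vertices from a root or red tree vertex, so that no per-tree overhead leaks in and the constants add up to exactly $5\gamma n$ with $\gamma=\epsilon/(10(1+\epsilon))$ and $K=\gamma n/(2(2\ell+k)k)$ as chosen in Algorithm~\ref{alg:cycle-construction}. Once this invariant and the loop invariant for $s$ are pinned down, parts~(a) and~(b) are routine and part~(c) follows by summation.
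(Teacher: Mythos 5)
Your proposal is correct and follows essentially the same bookkeeping argument as the paper: the stopping rule with increments of $k+1$ gives the $2\ell+k$ per-side bound, the $k$-to-$1$ red/blue ratio gives part (b), and part (c) comes from $|V_1\cup V_2|\le k|V_1|=4\gamma n$ plus the observation that each of the at most $\gamma n/k$ tree vertices accounts for at most $k-1$ additional removals outside its tree. The only cosmetic difference is that in (b) you exclude the root from the red count (requiring $\ell\ge 2$, or integrality when $k=1$), whereas the paper includes it; this is immaterial.
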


\begin{proof}
    To prove (a), note that the left or right subtree construction is deemed complete when it contains at least $2\ell$ vertices, and the completion condition is checked every time we add a child vertex along with its $k$ planted neighbors, which implies that each side of each subtree has at most $2\ell+k$ vertices.

    Next we prove (b). Consider a two-sided tree $T_k$ whose left tree contains at least $2\ell$ vertices. Note that by construction, the number of vertices on the even layers is exactly $k$ times the number of vertices on the layer above, each of which has $k$ children. Therefore within the left subtree, the number of vertices on the even layers is at least $(2\ell-1)\cdot k/(k+1)+1\geq \ell$, where the $-1$ and $+1$ account for the root node. The same argument applies to the right subtree.
    
    To prove (c), recall that in the initialization step, $\mathcal{F} = [n] \setminus (V_1 \cup V_2)$, where $V_1$ is the set of vertices represented by the reserved (red) edges $E^{\star}$, and $V_2$ is the set of vertices adjacent to a vertex in $V_1$ by a red edge. Since Algorithm \ref{alg:cycle-construction} sets $|E^{\star}| = \frac{2\gamma n}{k}$, it follows that at the initialization step, 
    \[|\mathcal{F}| \geq n - |V_1| - |V_2| \geq n - k|V_1| = n - 4\gamma n.\]
    In the tree construction stage, $K=\frac{\gamma n}{2(2\ell+k)k}$ two-sided trees are constructed. By \prettyref{prop:tree.facts} (a), each tree contains at most $2(2\ell + k)$ vertices, totalling at most $\gamma n/k$ vertices in all trees. Furthermore, each vertex that is removed from $\mathcal{F}$ in the tree construction stage is either in a tree, or is a planted neighbor of a vertex in a tree. Since each vertex in a tree has at least one of its planted neighbors in the tree, we have at most $\frac{\gamma n}{k}+ \frac{\gamma n}{k} \cdot (k-1)=\gamma n$ vertices removed from $\mathcal{F}$ during the tree exploration process. Therefore the size of $\mathcal{F}$ remains above $n-4\gamma n - \gamma n = n - 5\gamma n$.

\end{proof}

For the remainder of this subsection, we will condition on the realization of $H^*$.
In order to characterize the size of the trees, we compare the trees to branching processes, where the offspring distribution is $k$ independent copies of a suitable binomial random variable. At a high level, the probability that a given tree reaches a prescribed depth can be related to the survival probability of the branching process. We need the following auxiliary result about the survival of a supercritical branching process.

\begin{lemma}\label{lmm:branching}
Suppose a branching process has offspring distribution with expected value $\mu$ and variance $\sigma^2$ for some $\mu>1$, we have
\begin{equation}
\mathbb{P}\{\text{Branching process survives}\} \geq \frac{\mu^2-\mu}{\mu^2-\mu+\sigma^2}. \label{eq:branchin-brocess-survival}
\end{equation}
\end{lemma}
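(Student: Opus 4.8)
\textbf{Proof proposal for Lemma~\ref{lmm:branching}.}

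The plan is to reduce the survival probability to a statement about the extinction probability $q$, which is the smallest fixed point in $[0,1]$ of the probability generating function $f(s) = \mathbb{E}[s^X]$ of the offspring distribution $X$, and then bound $q$ from above using only the first two moments. First I would recall the classical fact that the extinction probability $q$ of a Galton--Watson process with offspring mean $\mu > 1$ satisfies $q = f(q)$ and $q < 1$, so that $\mathbb{P}\{\text{survival}\} = 1 - q$. It therefore suffices to show $1 - q \geq \frac{\mu^2 - \mu}{\mu^2 - \mu + \sigma^2}$, or equivalently $q \leq \frac{\sigma^2}{\mu^2 - \mu + \sigma^2}$.

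The key analytic step is a quadratic (Taylor-with-remainder / convexity) bound on $f$ near $s = 1$. Writing $f(s) = \sum_k p_k s^k$, one has $f(1) = 1$, $f'(1) = \mu$, and for $s \in [0,1]$ the second-order Taylor expansion about $1$ gives
\begin{equation}
f(s) = 1 - \mu(1-s) + \frac{(1-s)^2}{2} f''(\xi)
\end{equation}
for some $\xi \in (s,1)$. Since $f''(\xi) = \sum_k k(k-1) p_k \xi^{k-2} \leq \sum_k k(k-1) p_k = \mathbb{E}[X(X-1)] = \sigma^2 + \mu^2 - \mu$ (using $\xi \leq 1$ and nonnegativity of all terms), we obtain the bound $f(s) \leq 1 - \mu(1-s) + \frac{(1-s)^2}{2}(\sigma^2 + \mu^2 - \mu)$ for all $s \in [0,1]$. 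Now I would plug in the candidate value $s_0 = 1 - \frac{\mu^2-\mu}{\mu^2-\mu+\sigma^2} = \frac{\sigma^2}{\mu^2-\mu+\sigma^2}$ and check that $f(s_0) \leq s_0$: substituting $1 - s_0 = \frac{\mu^2-\mu}{\mu^2-\mu+\sigma^2} =: \alpha$, the inequality $f(s_0) \leq s_0$ becomes $1 - \mu\alpha + \frac{\alpha^2}{2}(\sigma^2+\mu^2-\mu) \leq 1 - \alpha$, i.e.\ $\frac{\alpha^2}{2}(\sigma^2+\mu^2-\mu) \leq (\mu-1)\alpha$; since $\alpha(\sigma^2+\mu^2-\mu) = \mu^2-\mu = \mu(\mu-1)$, the left side equals $\frac{\alpha}{2}\mu(\mu-1) \leq \alpha\mu(\mu-1)$, which is at most $(\mu-1)\alpha \cdot \mu$... wait—more carefully, I want $\frac{\alpha}{2}\mu(\mu-1) \le (\mu-1)\alpha$, i.e.\ $\frac{\mu}{2} \le 1$; this is false for $\mu > 2$, so I should instead not substitute the moment bound for $\sigma^2+\mu^2-\mu$ in the coefficient but keep track of it exactly. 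Redoing: with $M := \sigma^2 + \mu^2 - \mu = \mathbb{E}[X(X-1)]$ and $s_0$ chosen as the root of $1 - \mu(1-s) + \frac{(1-s)^2}{2}M = s$, one finds $1 - s_0 = \frac{2(\mu-1)}{M}$ if $\frac{2(\mu-1)}{M} \le 1$; then $\mathbb{P}\{\text{survival}\} \ge 1 - s_0 = \frac{2(\mu-1)}{\sigma^2+\mu^2-\mu}$. Comparing to the target $\frac{\mu^2-\mu}{\mu^2-\mu+\sigma^2} = \frac{\mu(\mu-1)}{\sigma^2+\mu^2-\mu}$, the claimed bound is weaker when $\mu \le 2$ and I must argue the other case separately; alternatively I would use the sharper one-sided estimate $f(s) \le 1 - \mu(1-s) + \binom{2}{2}\mathbb{E}\!\left[\binom{X}{2}\right](1-s)^2$ only as needed and fall back on the cruder Paley--Zygmund-type inequality $1-q \geq \frac{(\mathbb{E} X - 1)^2}{\mathbb{E} X^2 - ...}$; in fact the clean route is the standard \emph{second-moment/Paley--Zygmund bound} $1 - q \geq \frac{\mu^2 - \mu}{\mathbb{E}[X^2] - \mu} = \frac{\mu^2-\mu}{\sigma^2 + \mu^2 - \mu}$, which is exactly the stated inequality.

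So the cleanest execution, and the one I would actually write, is: let $Z_n$ be the generation sizes with $Z_0 = 1$, let $W = \mathbb{P}\{\text{survival}\}$; apply the inequality $\mathbb{P}\{Z_n > 0\} \geq \frac{(\mathbb{E} Z_n)^2}{\mathbb{E} Z_n^2}$ (Paley--Zygmund at level $0$, i.e.\ Cauchy--Schwarz: $\mathbb{E} Z_n = \mathbb{E}[Z_n \mathbf{1}\{Z_n>0\}] \le \sqrt{\mathbb{E} Z_n^2}\sqrt{\mathbb{P}\{Z_n>0\}}$), use the branching-process moment formulas $\mathbb{E} Z_n = \mu^n$ and $\mathrm{Var}(Z_n) = \sigma^2 \mu^{n-1}\frac{\mu^n - 1}{\mu - 1}$, so that
\begin{equation}
\frac{(\mathbb{E} Z_n)^2}{\mathbb{E} Z_n^2} = \frac{\mu^{2n}}{\mu^{2n} + \sigma^2 \mu^{n-1}\tfrac{\mu^n-1}{\mu-1}} = \frac{1}{1 + \tfrac{\sigma^2}{\mu(\mu-1)}\left(1 - \mu^{-n}\right)} \xrightarrow{n\to\infty} \frac{1}{1 + \tfrac{\sigma^2}{\mu(\mu-1)}} = \frac{\mu^2-\mu}{\mu^2-\mu+\sigma^2},
\end{equation}
and finally let $n \to \infty$, using $\mathbf{1}\{Z_n > 0\} \downarrow \mathbf{1}\{\text{survival}\}$ and monotone/dominated convergence to conclude $W = \lim_n \mathbb{P}\{Z_n > 0\} \geq \frac{\mu^2-\mu}{\mu^2-\mu+\sigma^2}$. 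The main obstacle is purely bookkeeping: getting the variance recursion $\mathrm{Var}(Z_n)$ right (it is standard, $\mathrm{Var}(Z_{n}) = \sigma^2 \mu^{n-1} + \mu^2 \mathrm{Var}(Z_{n-1})$, which solves to the geometric-sum formula above) and noting that the ratio is increasing in $n$ so the limit is also a valid lower bound for every finite $n$ — which is what is actually used in the application to the finite-depth tree construction.
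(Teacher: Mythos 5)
Your final argument (Paley--Zygmund/Cauchy--Schwarz applied to $Z_n$ together with the standard first- and second-moment formulas, then letting $n\to\infty$) is exactly the paper's proof, and it is correct. One trivial slip: the ratio $\bigl(1+\tfrac{\sigma^2}{\mu(\mu-1)}(1-\mu^{-n})\bigr)^{-1}$ is \emph{decreasing} in $n$, not increasing, but this only strengthens the finite-$n$ statement and does not affect the lemma.
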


We can now prove that sufficiently many large two-sided trees are constructed.
\begin{theorem}\label{theorem:enough-trees}
Suppose $k\lambda \ge 1+\epsilon$, and $k,\ell$ are constants. Recall the algorithm parameters $\gamma = \frac{\epsilon}{10(1+\epsilon)}$ (Algorithm \ref{alg:cycle-construction}) and $K = \frac{\gamma n}{2(2\ell +k)k}$ (Algorithm \ref{alg:tree-construction}).
Conditioned on any realization of $H^*$, with probability $1-O(n^{-3})$, Algorithm \ref{alg:tree-construction} yields at least $K_1$ two-sided trees $T_k = (L_k, R_k)$ for which both $L_k$ and $R_k$ contain at least $\ell$ red vertices, where 
\[
K_1 = \frac{K\epsilon^2}{2(\epsilon+2k)^2}.
\]
\end{theorem}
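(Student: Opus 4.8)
The plan is to track the tree construction in Algorithm~\ref{alg:tree-construction} through a coupling with a pair of independent binomial branching processes, one for each side of a two-sided tree, and then to show that a constant fraction of the $K$ attempted trees succeed. Recall that each ``round'' of the BFS growth in Algorithm~\ref{alg:tree-construction} adds a blue child $v$ (drawn from the full-branching neighbors of the current leaf $u$) together with its $k$ planted neighbors, which then become the new leaves. Viewing two layers at a time, the offspring of a leaf $u$ is $k$ times a random variable counting the full-branching unplanted neighbors of $u$ that are not yet in any tree. By Proposition~\ref{prop:tree.facts}(c), throughout the construction $|\mathcal{F}| \ge n - 5\gamma n$, so each leaf $u$ has at least $n - 5\gamma n - O(\ell K)$ candidate neighbors available, each joined to $u$ by an unplanted edge independently with probability $\lambda/n$ conditionally on $H^*$ (the unplanted edges are independent of $H^*$ and of the edges already inspected, since full-branching vertices by definition have had no incident edge inspected). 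Hence the number of blue children of $u$ stochastically dominates a $\mathrm{Binom}(n - 6\gamma n, \lambda/n)$ variable for all relevant $n$, and the number of red grandchildren (the next-layer leaves) dominates $k$ i.i.d.\ such copies. Choosing $\gamma = \frac{\epsilon}{10(1+\epsilon)}$, the mean offspring per leaf over two layers is at least $k \lambda (1 - 6\gamma) \ge (1+\epsilon)(1 - \tfrac{6\epsilon}{10(1+\epsilon)}) \ge 1 + \tfrac{\epsilon}{4}$ for small enough $\epsilon$ (and for larger $\epsilon$ one replaces the constants appropriately), so the dominating branching process is supercritical with mean $\mu \ge 1 + \epsilon/2$ and variance $\sigma^2 \le k^2\lambda \le k(1+\epsilon) + o(1)$, say $\sigma^2 \le 2k(\epsilon + 2k)/\epsilon \cdot (\text{const})$; the exact bookkeeping will be chosen to make the final constant $K_1 = \frac{K\epsilon^2}{2(\epsilon+2k)^2}$ come out, via Lemma~\ref{lmm:branching}.

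Given the coupling, the next step is: for a single attempted tree, lower-bound the probability that the left subtree reaches $2\ell$ vertices. Since $2\ell$ is a constant, ``reaching size $2\ell$'' is implied by the branching process surviving for a constant number of generations, which happens with probability at least (a constant fraction of) the survival probability in Lemma~\ref{lmm:branching}, namely $\frac{\mu^2 - \mu}{\mu^2 - \mu + \sigma^2} \ge \frac{\mu - 1}{\sigma^2/\mu + \mu - 1} \ge c_\epsilon$ for an explicit $c_\epsilon = \Theta(\epsilon/(\epsilon + k^2))$ — I will choose the finite truncation depth large enough (depending on $\ell, \epsilon, k$, all constants) that the truncated survival probability is at least $\frac{\epsilon}{2(\epsilon+2k)}$ on each side. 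The left and right subtrees of a given attempted tree are grown from two distinct planted neighbors drawn from disjoint vertex pools (we remove the left tree's vertices before growing the right), so conditionally on the state of $\mathcal{A}, \mathcal{F}$ entering the round, the two survival events can be coupled to be independent, giving success probability at least $p_0 := \left(\frac{\epsilon}{2(\epsilon+2k)}\right)^2$ for one attempted two-sided tree, uniformly over the history. By Proposition~\ref{prop:tree.facts}(b), a two-sided tree with both sides of size $\ge 2\ell$ automatically has $\ge \ell$ red vertices on each side, so ``success'' in this sense is exactly what the theorem counts.

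The final step is concentration over the $K$ attempted trees. Because the per-round success probability is bounded below by $p_0$ \emph{regardless of the outcomes of earlier rounds} (as long as Algorithm~\ref{alg:tree-construction} has not returned \texttt{FAIL}, i.e.\ an unreserved planted edge with both endpoints available still exists — which holds since by Proposition~\ref{prop:tree.facts}(c) at least $n - 5\gamma n$ full-branching vertices, hence $\Omega(n)$ available vertices with incident planted edges, remain throughout), the number of successful trees stochastically dominates $\mathrm{Binom}(K, p_0)$. Since $K = \Theta(n)$ and $p_0$ is a constant, a Chernoff bound gives that the number of successes is at least $\tfrac{1}{2} p_0 K$ with probability $1 - e^{-\Omega(n)}$, which is $1 - O(n^{-3})$; it remains only to check that $\tfrac12 p_0 K \ge K_1 = \frac{K\epsilon^2}{2(\epsilon+2k)^2}$, which holds with $p_0 = \left(\frac{\epsilon}{2(\epsilon+2k)}\right)^2$ up to adjusting the truncation-depth constants so that the truncated survival bound is $\frac{\epsilon}{\sqrt 2(\epsilon+2k)}$ on each side.

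\textbf{Main obstacle.} The delicate point is making the coupling between the BFS exploration and the binomial branching process rigorous while the pools $\mathcal{A}$ and $\mathcal{F}$ shrink and while maintaining the independence of the two sides of each tree and across the $K$ rounds. The key facts enabling it are (i) the ``full-branching'' invariant, which guarantees every newly examined vertex has all $k$ of its planted neighbors still available and, crucially, that \emph{no} unplanted edge incident to it has been inspected before, so the $\mathrm{Binom}$ of children is genuinely fresh randomness conditionally on $H^*$ and the past; and (ii) Proposition~\ref{prop:tree.facts}(c), which keeps the candidate pool of size $n(1 - O(\gamma))$ throughout, so the dominating binomial parameter never degrades below the supercritical threshold. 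I would spell out a filtration $(\mathcal{F}_t)$ indexed by the elementary steps of the algorithm, verify that at each step the conditional law of the set of new children dominates the claimed $\mathrm{Binom}$, and then invoke a standard stochastic-domination lemma for branching processes built from such a sequence; the quantitative survival estimate then follows from Lemma~\ref{lmm:branching} applied to the dominating process.
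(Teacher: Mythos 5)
Your proposal follows essentially the same route as the paper's proof: dominate each side's growth (two layers at a time) by a branching process with offspring $k\cdot\Binom(n-\Theta(\gamma n),\lambda/n)$, using the full-branching invariant and \prettyref{prop:tree.facts}(c) to keep the candidate pool large and the new edge randomness fresh; lower-bound the survival probability via \prettyref{lmm:branching}; and conclude with stochastic domination by a $\Binom(K,p^2)$ plus a Chernoff bound. The paper implements the domination by explicitly constructing $2K$ independent branching processes $B_1,\dots,B_{2K}$ coupled to the trees (valid as long as a tree has fewer than $2\ell$ vertices), whereas you phrase it as conditional domination along a filtration; these are interchangeable.

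The one concrete issue is the constant bookkeeping against the stated $K_1=\frac{K\epsilon^2}{2(\epsilon+2k)^2}$. No ``truncation discount'' is needed at all: the probability of surviving to any finite depth is at least the survival probability, which by \prettyref{lmm:branching} (with $\sigma^2\le k\mu$) is at least $\frac{\mu-1}{\mu-1+k}$, and with the pool size $n-5\gamma n$ from \prettyref{prop:tree.facts}(c) one gets $\mu\ge(1+\epsilon)(1-5\gamma)=1+\epsilon/2$, hence per-side probability at least $\frac{\epsilon}{\epsilon+2k}$; then a half-mean Chernoff bound lands exactly on $K_1$. As you wrote it, you both shrink the pool to $n-6\gamma n$ (the $O(\ell K)$ subtraction double-counts what \prettyref{prop:tree.facts}(c) already accounts for, and it degrades $\mu$ to $1+2\epsilon/5$, giving $p\ge\frac{2\epsilon}{2\epsilon+5k}<\frac{\epsilon}{\epsilon+2k}$) and discount the per-side probability to $\frac{\epsilon}{\sqrt{2}(\epsilon+2k)}$, after which the half-mean Chernoff yields only about $K_1/2$; your suggested ``adjust the truncation constants'' does not by itself close this, though replacing the half-mean Chernoff by a $(1-\delta)$-mean Chernoff with small $\delta$, or simply removing the unnecessary slack as above, does. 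This is a fixable quantitative slip, not a conceptual gap.
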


\begin{proof}
Note that by construction, each vertex on an odd layer of a tree has exactly $k$ children. Therefore the only source of randomness in the number of red vertices in $L_k$ and $R_k$ comes from when vertices attach to their parents via unplanted edges, i.e. when the tree grows to an odd layer. 

At a high level, we will compare each tree's growth to a (two-sided) branching process, lower-bounding the probability that a tree is grown successfully by the probability that the branching process survives. A challenge arises due to Step \ref{step:remove}, where we remove the red neighbors $\mathcal{C}_v$ from $\mathcal{F}$, where $\mathcal{C}_v$ is the set of red neighbors of a tree vertex $v$. The purpose of this removal is to maintain the invariant that any vertex in $\mathcal{F}$ has all of its red neighbors in the set $\mathcal{A}$. Still, we can control the number of vertices that are removed while the tree is still smaller than the target size, enabling a comparison to an auxiliary branching process.

Formalizing the comparison, construct $2K$ independent branching processes with offspring distribution $k\cdot \Binom(n-5\gamma n, \lambda/n)$, denoted by $B_1, \ldots, B_{2K}$. Let $p$ be the survival probability of $B_1$. To lower-bound $p$, we apply \prettyref{lmm:branching}
with
\begin{align*}
\mu &= k(n-5\gamma n) \cdot \frac{\lambda}{n},\\
\sigma^2 &= k^2(n-5\gamma n) \cdot \frac{\lambda}{n} \left(1-\frac{\lambda}{n}\right) \leq k\mu.
\end{align*}
We have the survival probability
\[
p\geq \frac{\mu^2-\mu}{\mu^2-\mu + \sigma^2}
\geq \frac{\mu-1}{\mu-1 +k}.
\]
Since $k\lambda\ge 1+\epsilon$, we have $\mu\geq (1+\epsilon)(1-5\gamma)= 1+\epsilon/2$. 
It follows that
\[
p\geq \frac{\epsilon}{\epsilon+2k}.
\]

We will construct a coupling such that 
for every $1 \le i\le 2K,$ as long as the $i^{\text{th}}$ tree (could be left side or right side) has not reached the size of $2\ell$, it has at least as many offspring as $B_i$ at each layer. Specifically, when the $i^{\text{th}}$ tree grows to an odd layer from a given parent node $u$, we sequentially check a full-branching node $v$ from the set $\calF$, reveal whether $v$ is connected to $u$ via a blue edge, and update the set $\calF$ accordingly. Crucially, the blue edge between $u$ and $v$ is distributed as $\Bern(\lambda/n)$, independently of everything else. Thus we can couple the blue edge between $u$ and $v$ with a new offspring in the branching process $B_i$ as follows. If the blue edge between $u$ and $v$ exists, we add a new offspring to $B_i$. Since the number of full-branching vertices satisfies $|\calF| \ge n-5\gamma n$ throughout the entire construction, there are only two possibilities. Either we check $n-5\gamma n$ full-branching nodes $v$, in which case we stop adding new offspring to $B_i$. Otherwise, the $i^{\text{th}}$ tree has reached the size of $2\ell$ nodes and the construction of the $i$-th tree is finished. In this case, we randomly add additional offspring to $B_i$  to ensure the offspring distribution of $B_i$ is exactly $\Binom(n-5\gamma n, \lambda/n)$. We can check that under this coupling, when the $i$-th tree has not reached the size of $2\ell $ nodes, it has at least as many offspring as $B_i$ at each layer. Therefore, 
$$
\mathds{1}\{ B_i \text{ survives}\} 
\le \mathds{1}\{ T_i \text{ contain at least } 2\ell \text{ vertices}\}
$$
In other words, when $T_i$ contains fewer than $2\ell$ nodes, $B_i$ must die out. It follows that 
    \begin{align*}
    & \prob{\sum_{i\leq K}\mathds{1}\{\text{Both sides of }T_k\text{ contain at least }2\ell \text{ vertices}\} < Kp^2/2}\\
    &\leq  \prob{\sum_{i\leq K} \mathds{1}\{ B_{2i-1} \text{ and } B_{2i} \text{ survive}\} <Kp^2/2}\\
    &\leq  \prob{\Binom(K,p^2)<Kp^2/2}\\
    &=  e^{-\Omega(n)}
\end{align*}
since $K=\Omega(n)$.

From \prettyref{prop:tree.facts} (b), if both sides of $T_k$ contain at least $2\ell$ vertices, we must have $|L_k|\geq \ell$ and $|R_l|\geq \ell$. We have shown that with probability $1-O(n^{-3})$, the number of trees satisfying $|L_k|\geq \ell$ and $|R_k|\geq \ell$ is at least
\[
\frac{Kp^2}{2}\geq \frac{K\epsilon^2}{2(\epsilon+2k)^2}=K_1.
\]
conditioned on the realization of $H^*$.
\end{proof}

Finally, we prove Lemma \ref{lmm:branching}.
\begin{proof}[Proof of Lemma \ref{lmm:branching}]
The proof mostly follows the derivations in~\cite[Chapter 2.1]{durrett2007random}.
Let $Z_m$ denote the number of vertices in generation $m$. Given $Z_{m-1}$, the conditional first and second moments of $Z_m$ satisfy
\begin{align*}
    \mathbb{E}[Z_m | Z_{m-1}] = & \mu Z_{m-1},\\
    \mathbb{E}[Z_m^2 | Z_{m-1}] = & \mu^2 Z^2_{m-1} + Z_{m-1}\sigma^2.
\end{align*}
Taking expected values on both sides and iterating and noting $Z_0=1$, we have
$
\mathbb{E}[Z_m] = \mu^m,
$
and
\begin{align*}
\mathbb{E}[Z_m^2]  = \mu^{2m} + \sigma^2 \sum_{j=m-1}^{2m-2} \mu^j
\leq  \mu^{2m} + \sigma^2\frac{\mu^{2m-2}}{1-\mu^{-1}}.
\end{align*}

By the Paley--Zygmund inequality, the probability that the branching process survives to iteration $m$ is
\[
\mathbb{P}\{Z_m\geq 1\}\geq 
\frac{\mathbb{E}[Z_m]^2}{\mathbb{E}[Z_m^2]}
\geq \frac{\mu^{2m}}{\mu^{2m}+\sigma^2\frac{\mu^{2m-2}}{1-\mu^{-1}}}
= \frac{\mu^2-\mu}{\mu^2-\mu+\sigma^2}.
\]
Take $m\rightarrow\infty$ to finish the proof.

\end{proof}

\subsubsection{Cycle construction}
We now provide a guarantee on the output of Algorithm \ref{alg:cycle-construction}. \prettyref{lmm:Mbad} then follows as a simple corollary of the following result. 
\begin{lemma}\label{lemma:cycle-construction}
Let $\epsilon >0$ be such that $k\lambda \geq 1+\epsilon$, and recall that $\gamma = \tfrac{\epsilon}{10(1+\epsilon)}$. Let $\mathcal{C}$ be the output of Algorithm \ref{alg:cycle-construction} on input $(G, \ell, d)$, where $\ell =  \frac{2^{13} \log(32 e) k^2}{\lambda^2 \gamma^2} \alpha$, and $d = \frac{2^{11} \log (32 e) k}{\lambda \gamma} \alpha$, for $\alpha \geq 1$ sufficiently large. Then there exist constants $c_1, c_2> 0$ such that $\mathcal{C}$ contains at least $e^{c_1 n}$ cycles of length at least $c_2 n$, with probability $1 - e^{-\Omega(n)}$, for any realization of $H^*$. 
\end{lemma}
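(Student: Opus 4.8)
The plan is to trace through Algorithm \ref{alg:cycle-construction} and establish that the auxiliary bipartite graph $\overline{G}$ contains linearly many vertices and has an Erd\H{o}s--R\'enyi-like blue-edge structure with constant edge probability, then invoke the cycle-counting result of \cite{Ding2023} for two-colored bipartite graphs with a perfect matching. First I would fix the realization of $H^*$ and run Algorithm \ref{alg:tree-construction}, so that by \prettyref{theorem:enough-trees} we have $K_1 = \Theta(n)$ two-sided trees $(L_i, R_i)$, each side containing at least $\ell$ red vertices, with probability $1 - O(n^{-3})$. The next step is to show that for a typical such tree $L_i$, the number of tree-facing endpoints in $E_L^\star$ to which it is blue-connected stochastically dominates a binomial: each of the $\ell$ red vertices of $L_i$ independently connects to each of the $|E_L^\star|/2 = \gamma n / k$ tree-facing endpoints with probability $\lambda/n$, and the relevant edge slots are fresh (none was inspected during tree construction, by the full-branching bookkeeping and the reservation of $E^\star$ in Algorithm \ref{alg:matching}). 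A union/second-moment or concentration argument then shows that the expected number of blue-connected tree-facing endpoints is $\Theta(\ell \gamma n / k \cdot \lambda / n) = \Theta(\ell \lambda \gamma / k)$, which by the choice of $\ell$ is a large multiple of $d$; hence with probability $1 - e^{-\Omega(n)}$ a constant fraction of the $K_1$ trees are blue-connected to at least $d$ tree-facing endpoints on \emph{each} side. I would then handle the collision/marking bookkeeping: processing trees sequentially, each selected tree consumes exactly $d$ reserved edges per side, so as long as the number of selected trees times $d$ stays below $|E_L^\star|$, no selected tree is starved; since $|E^\star| = \Theta(n)$ and $d = \Theta(1)$, this is satisfied and we retain $m = \Theta(n)$ vertices in $\overline{G}$.

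With $m = \Theta(n)$ vertices secured, the third step is the independence of the blue edges of $\overline{G}$. Because the marking step guarantees that the sets $\mathcal{E}(L_i)$ (resp. $\mathcal{E}(R_j)$) are pairwise disjoint across selected trees, the event that linking endpoints of $\mathcal{E}(L_i)$ and $\mathcal{E}(R_j)$ are joined by a blue edge depends on a block of $d^2$ edge slots that is disjoint from the block for any other pair $(i',j')$; moreover these slots were untouched by the tree construction (the linking endpoints are the $v > u$ endpoints of reserved edges and their incident unplanted edges were never inspected). Hence, conditionally on the trees, the pairs $(i,j)$ are joined independently with probability exactly $q = 1 - (1-p)^{d^2}$ where $p = \lambda/n$, i.e. $q = \Theta(d^2 \lambda / n) = \Theta(1/n)$ up to the constant $d^2\lambda$. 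Thus $\overline{G}$ restricted to its blue edges is an Erd\H{o}s--R\'enyi bipartite graph $\calG(m,m,q)$ with $mq = \Theta(d^2 \lambda \gamma / k) = \Theta(\alpha)$; choosing $\alpha$ (hence $d$) large enough makes the mean blue-degree a large constant, so the blue graph is supercritical.

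The final step is to apply the result from \cite{Ding2023} that a two-colored bipartite graph on $m+m$ vertices consisting of a perfect (red) matching plus independent blue edges of density $q$ with $mq$ a sufficiently large constant contains $e^{\Omega(m)}$ alternating cycles each of length $\Omega(m)$, with probability $1 - e^{-\Omega(m)}$; this transfers to $\overline{G}$ with its red root-matching and blue edges. Each alternating cycle in $\overline{G}$ of length $2r$ lifts, via the five-edge constructions and the alternating paths inside the trees, to an alternating circuit in $G$ of length at least $2r$ (indeed longer, since each $\overline{G}$-edge expands to a path through a tree plus a five-edge gadget), and distinct $\overline{G}$-cycles lift to distinct $G$-circuits because the trees and reserved edges used are distinct; XOR-ing each such circuit with $H^*$ yields a distinct $k$-factor in $\Mbad$ once the circuit length exceeds $\delta n$. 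Combining the failure probabilities $O(n^{-3})$ from \prettyref{theorem:enough-trees} and $e^{-\Omega(n)}$ from the concentration, collision-avoidance, and cycle-counting steps gives the claim with $c_1, c_2 > 0$. The main obstacle I anticipate is the second step: carefully justifying that the blue-connection counts concentrate for \emph{a linear fraction of trees simultaneously} while respecting the sequential collision-avoidance in Step \ref{step:collision} of Algorithm \ref{alg:cycle-construction} — one must argue that removing, at each step, the $\leq 2d$ already-marked endpoints barely perturbs the binomial lower bound, so the greedy selection keeps succeeding for $\Theta(n)$ rounds rather than stalling.
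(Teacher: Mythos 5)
Your overall route is the same as the paper's: invoke Theorem \ref{theorem:enough-trees} to get $\Theta(n)$ two-sided trees with at least $\ell$ red vertices per side, count blue connections to tree-facing endpoints, use disjointness of the designated endpoint sets to make the five-edge links independent with probability $1-(1-\lambda/n)^{d^2}$, and reduce to Lemma \ref{lemma:bipartite-ER} for the auxiliary bi-colored bipartite graph. However, the step you yourself flag as the main obstacle --- surviving the sequential collision-avoidance of Step \ref{step:collision} --- is where your argument has a genuine gap, and your two proposed justifications do not close it. First, the claim that ``as long as the number of selected trees times $d$ stays below $|E_L^{\star}|$, no selected tree is starved'' is not valid: a tree $L_i$ can be blue-connected to many tree-facing endpoints and still fail the selection test if the particular endpoints it hits happen to be among the already-marked ones; a global budget count says nothing about this. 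Second, your sketch of the fix (``removing, at each step, the $\leq 2d$ already-marked endpoints barely perturbs the binomial'') misstates the bookkeeping: after $j$ trees have been selected, $jd$ endpoints are marked on each side, so by the time you hope to have selected $\Theta(n)$ trees the marked set has linear size, not size $O(d)$, and the perturbation is not negligible without a quantitative argument.

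The paper resolves exactly this point by a stochastic-domination argument tied to a carefully chosen target $c$: it only asks for $cn$ selected trees with $c = 512\log(32e)/(\lambda d^2)$, so that on the complementary event (at most $cn$ trees selected so far) each subsequent tree still sees at least $\gamma n/k - d - cnd$ unmarked tree-facing endpoints, whence $X_i$ and $Y_i$ stochastically dominate $\tilde X_i,\tilde Y_i\sim \Binom\left(\ell\left(\gamma n/k - d - cnd\right),\lambda/n\right)$. The specific coupling of the parameters $\ell$ and $d$ in the lemma statement is what guarantees $d \le \lfloor \mathbb{E}[\tilde X_i]\rfloor$, so that $\prob{\tilde X_i \ge d}\ge 1/2$ by the binomial mean--median property, and a Chernoff bound then shows at least $(1-\delta)K_1/4 \ge cn$ trees qualify with probability $1-e^{-\Omega(n)}$; the same choice of $c$ also makes $cn\cdot \lambda d^2/(2n)$ large enough for Lemma \ref{lemma:bipartite-ER}. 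Your proposal never performs this balancing (indeed you aim for a constant fraction of all $K_1$ trees rather than a tuned $cn$, and never verify the analogue of $d \le \lfloor \mathbb{E}[\tilde X_i]\rfloor$ against the marking loss), so the claim that the greedy selection succeeds for linearly many rounds remains unproven. A secondary, more cosmetic issue: you combine a $1-O(n^{-3})$ bound from Theorem \ref{theorem:enough-trees} with $e^{-\Omega(n)}$ bounds and then assert the $1-e^{-\Omega(n)}$ conclusion; one needs the exponential version of the tree-count guarantee (which its proof in fact provides) for the stated failure probability.
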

To prove Lemma \ref{lemma:cycle-construction}, we will reduce to the problem of finding large cycles in a random bipartite graph with a perfect matching, using the following key result which we record for completeness.
\begin{lemma}\cite[Lemma 7]{Ding2023}\label{lemma:bipartite-ER} 
Let $G$ be a bi-colored bipartite graph on $[m] \times [m]'$ whose $m$ red edges are defined by a perfect matching, and blue edges are generated from a bipartite Erd\H{o}s--R\'enyi graph with edge probability $\frac{D}{m}$. If $m \geq 525$ and $D \geq 256 \log(32 e)$, then with probability at least $1-\exp\left(-\frac{Dm}{2^{14}}\right)$, $G$ contains $\exp\left(\frac{m}{20}\right)$ distinct alternating cycles of length at least $\frac{3m}{4}$.
\end{lemma}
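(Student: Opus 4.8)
The plan is to translate the statement into one about a directed Erd\H{o}s--R\'enyi graph and then prove it by a first/second moment analysis. Relabel the vertices so that the red perfect matching pairs left-vertex $i$ with right-vertex $i$, and contract every red edge; this turns $G$ into a random digraph $\vec H$ on $[m]$ in which there is an arc $i\to j$ precisely when $G$ contains the blue edge joining right-$i$ to left-$j$, so the arcs are i.i.d.\ $\Bern(D/m)$. A directed cycle $a_1\to a_2\to\cdots\to a_\ell\to a_1$ in $\vec H$ corresponds to the alternating cycle $\text{left-}a_1$ --- $\text{right-}a_1$ --- $\text{left-}a_2$ --- $\text{right-}a_2$ --- $\cdots$ of length $2\ell$ in $G$; this correspondence is a bijection onto the alternating cycles of $G$, because whenever an alternating cycle traverses a red (hence matching) edge its two endpoints share an index, which forces exactly this form, and distinct directed cycles use distinct blue-edge sets. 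Hence it suffices to show that $\vec H\sim\vec{\mathcal G}(m,D/m)$ contains at least $\exp(m/20)$ directed cycles of length at least $3m/4$, with probability at least $1-\exp(-Dm/2^{14})$; each such cycle yields an alternating cycle of length $\ge 3m/4$ in $G$.

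\textbf{First moment.} Let $X$ be the number of directed cycles in $\vec H$ of length exactly $\ell_0:=\lceil 3m/4\rceil$. Counting ordered vertex tuples and applying Stirling's formula,
\begin{align*}
\mathbb{E}[X]=\binom{m}{\ell_0}(\ell_0-1)!\Big(\frac{D}{m}\Big)^{\ell_0}=\frac{m!}{\ell_0\,(m-\ell_0)!}\Big(\frac{D}{m}\Big)^{\ell_0}=\exp\!\big((1+o(1))\,c(D)\,m\big),\qquad c(D)=\tfrac34\log\tfrac{D}{e}+\tfrac12\log 2 .
\end{align*}
For $D\ge 256\log(32e)$ one has $c(D)>\tfrac1{20}$ with a wide margin (and $c(D)\to\infty$ as $D\to\infty$), so the expected number of long directed cycles already dwarfs the target $\exp(m/20)$. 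The entire content of the lemma is therefore that $X$ does not fall far below $\mathbb{E}[X]$, with exponentially small failure probability.

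\textbf{Concentration — the main obstacle.} A naive second moment fails: the contribution to $\mathbb{E}[X^2]$ from pairs of $\ell_0$-cycles that coincide except along a few short sub-paths inflates $\mathbb{E}[X^2]$ by a factor $\exp(\Theta(m))$ over $\mathbb{E}[X]^2$, and this is unavoidable in a digraph with only $\Theta(m)$ arcs, where no ``local'' sprinkling device can produce $\Theta(m)$ independent rerouting options. The remedy I would use is a conditional second moment (in the spirit of small-subgraph conditioning): the inflating correlations are governed by the numbers $Y_j$ of short directed cycles of length $j\le K$ (for a large constant $K$), whose conditional means are the constants $D^j/j$; conditioned on the likely event $\{Y_j\le 2D^j/j:\ j\le K\}$ the dominant correlation disappears, so a Paley--Zygmund bound gives $X\ge\tfrac12\mathbb{E}[X]\ge\exp(m/20)$, and a large-deviation estimate for the event $\{X<\tfrac12\mathbb{E}[X]\}$ — where the margin $c(D)-\tfrac1{20}$, together with $m\ge 525$, is spent, and where a union bound over $\Theta(m)$-sized vertex subsets produces the precise exponent $Dm/2^{14}$ — upgrades this to failure probability $\exp(-Dm/2^{14})$. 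An alternative that may be easier to make fully quantitative is to pass to counting directed cycle covers of $\vec H$ (perfect matchings of its bipartite split): a Schrijver/matrix-scaling lower bound gives at least $(D/e)^{(1-o_D(1))m}$ cycle covers except with probability $\exp(-\Omega(Dm))$, a first-moment calculation shows that a $\log(4/3)-o(1)$ fraction of them carry a cycle of length $\ge 3m/4$, and since the complement of a fixed long cycle is a digraph on $\le m/4$ vertices with only $O(Dm)$ arcs, each long cycle appears in at most $\exp(O_D(1)\cdot m/4)$ cycle covers; dividing recovers $\exp(\Omega(m))\ge\exp(m/20)$ distinct long directed cycles. Either way, the reduction and the first moment are routine, and obtaining the concentration with the stated exponentially small failure probability is the crux — it is exactly the step where the hypotheses $m\ge525$ and $D\ge256\log(32e)$ are needed.
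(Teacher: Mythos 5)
There is a genuine gap, and it sits exactly where you say it does. This lemma is not proved in the paper at all --- it is imported verbatim from \cite[Lemma 7]{Ding2023} --- so the only question is whether your blind argument constitutes a proof, and it does not: you complete only the routine parts (the contraction of the red matching to a directed $\mathcal{G}(m,D/m)$ digraph, which is a correct bijection between alternating cycles of length $2\ell$ and directed $\ell$-cycles, and the first-moment count with exponent $c(D)=\tfrac34\log\tfrac{D}{e}+\tfrac12\log 2$), and you explicitly defer the entire content of the lemma --- a \emph{lower} bound on the number of long cycles holding with failure probability $\exp(-Dm/2^{14})$ --- to two sketched strategies, neither of which is carried out and neither of which, as described, can deliver that tail. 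Paley--Zygmund and small-subgraph conditioning are intrinsically constant-probability tools: conditioning on the short-cycle counts $Y_j$ controls the second-moment ratio and can give $\mathbb{P}\{X\ge \tfrac12\mathbb{E}[X]\}\ge c>0$, but the ``large-deviation estimate for $\{X<\tfrac12\mathbb{E}[X]\}$'' that is supposed to upgrade this to $1-\exp(-Dm/2^{14})$ is precisely the missing theorem, not a routine add-on; cycle counts in sparse random (di)graphs do not concentrate exponentially around their means by any off-the-shelf inequality, since flipping $O(1)$ arcs can change $X$ by an $e^{\Theta(m)}$ factor. The alternative route also rests on unsupported pivots: Schrijver's permanent bound is for $d$-regular bipartite graphs and needs real work to apply to a Bernoulli digraph with exponentially good probability, and the claim that a constant fraction of cycle covers contain a cycle of length $\ge 3m/4$ is asserted, not proved --- it is again a statement about a random structure that needs its own concentration argument.

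For comparison, the form of the probability bound (and of the hypotheses $m\ge 525$, $D\ge 256\log(32e)$) indicates that the cited proof proceeds not by moment methods but by first establishing, via Chernoff bounds and a union bound over pairs of linear-sized vertex subsets, that with probability $1-\exp(-Dm/2^{14})$ the blue graph has an expansion/joining property (every two sufficiently large vertex sets are joined by a blue edge), and then \emph{deterministically} extracting $\exp(m/20)$ distinct long alternating cycles from any graph with that property by a combinatorial (rotation/extension-type) construction. That two-step structure is what produces an exponentially small failure probability; a second-moment scheme, however refined, does not. So the reduction and first moment in your write-up are fine but essentially decorative --- the crux you identify is genuinely absent, and the routes you propose for it would need to be replaced by (or substantially rebuilt into) an expansion-plus-deterministic-construction argument.
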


\begin{proof}[Proof of Lemma \ref{lemma:cycle-construction}]
Let $E^{\star} \subset [n]^2$ be the output of Algorithm \ref{alg:matching} on input $(G,2\gamma n/k)$, so that $|E^{\star}| = 2\gamma n/k$. Next, let $\mathcal{T}$ be the output of Algorithm \ref{alg:tree-construction} on input $(G, E^{\star}, s)$. Let $E_1$ be the event that $\mathcal{T} = \{T_i = (L_i, R_i)\}_i$ contains at least $K_1$ two-sided trees with at least $\ell$ red vertices in each subtree, where
\[K_1 =\frac{K\epsilon^2}{2(\epsilon+2k)^2} = \frac{\gamma n \epsilon^2}{4(2\ell+k)k(\epsilon+2k)^2}.\]
By Theorem \ref{theorem:enough-trees}, we have $\mathbb{P}(E_1 | H^*) = 1 - e^{-\Omega(n)}$. On the event $E_1$, assume without loss of generality that $|L_i|, |R_i| \geq \ell$ for all $i \in \{1, 2, \dots, K_1\}$.

Our next goal is to characterize the bipartite graph $\overline{G}$ constructed in Algorithm \ref{alg:cycle-construction}, on the event $E_1$. A first observation is that the blue edges between trees and tree-facing vertices are independent of the edges between linking vertices. Indeed, this independence is the reason for the five-edge linking construction. Next, we find a lower bound on the probability that a given left tree $L_i$ is connected to a right tree $R_j$ by a five-edge construction. Suppose that $L_i$ connects to $a$ edges among $E_L^{\star}$ and $R_j$ connects to $b$ edges among $E_R^{\star}$. In that case, there are $a\cdot b$ pairs of linking edges that could be used to complete a five-edge connection between $L_i$ and $R_j$, so that the probability that $L_i$ and $R_j$ are connected by a five-edge construction is
\begin{equation}
1 - \left(1-\frac{\lambda}{n}\right)^{a b} \geq \frac{\lambda ab}{2n}, \label{eq:edge-prob}    
\end{equation}
where the inequality holds for $n$ sufficiently large (using $(1-x)^y \leq 1-\frac{xy}{2}$ for $0 \leq x \leq \frac{1}{y}$). 

Intuitively, if many trees among $\{L_i\}$ and $\{R_j\}$ are blue-connected to many tree-facing endpoints, then the five-edge construction should produce many long cycles. Therefore, we would like to show that many trees connect to some large constant number of tree-facing endpoints. At the same time, we need to control for collisions; that is, when two trees connect to the same tree-facing endpoint, since in those cases we lose the requisite independence in the five-edge construction. For this reason, the construction of $\overline{G}$ considers trees in sequence, and avoids such collisions by design (see Step \ref{step:collision}).

For some $c > 0$ to be determined, let $\mathcal{E}$ be the event that Algorithm \ref{alg:cycle-construction} identifies at least $cn$ trees $(L_i, R_i)$ which are both blue-connected to $d$ unmarked tree-facing endpoints. We will show that $\mathbb{P}(\mathcal{E} \mid H^*) = 1-e^{-\Omega(n)}$. To this end, let $X_i$ be the number of unmarked tree-facing endpoints that are blue-connected to $L_i$, and let  $Y_i$ be the number of unmarked tree-facing endpoints that are blue-connected to $R_i$. Define independent random variables $\tilde{X}_i, \tilde{Y}_i \sim \text{Bin}\left(\ell \left(\frac{\gamma n}{k}-d - cn d \right) ,\frac{\lambda}{n}\right)$. We claim that 
\[\prob{\mathcal{E}^c \mid H^*} \leq \prob{\sum_{i=1}^{K_1} \mathbbm{1}\{\tilde{X}_i \geq d, \tilde{Y}_i \geq d\} \le cn }.\] 
To see this, suppose $\calE^c$ holds. Then Algorithm \ref{alg:cycle-construction} identifies at most $cn$ trees $(L_i, R_i)$. Therefore, for each $i \in [K_1]$, there are at least $\frac{\gamma n}{k}-d - cn d$ tree-facing vertices that are not yet connected to any tree. Hence, $X_i$ and $Y_i$'s stochastically dominate $\tilde{X}_i$ and $\tilde{Y}_i$, respectively. 
It follows that 
\[\prob{\mathcal{E}^c \mid H^*} 
=\prob{\sum_{i=1}^{K_1} \mathbbm{1}\{X_i \geq d, Y_i \geq d\} \le cn \mid H^*}
\leq \prob{\sum_{i=1}^{K_1} \mathbbm{1}\{\tilde{X}_i \geq d, \tilde{Y}_i \geq d\} \le cn }.\]

Let $Z = \sum_{i=1}^{K_1} \mathbbm{1}\{\tilde{X}_i \geq d, \tilde{Y}_i \geq d\}$. Observe that $\mathbb{E}[\tilde{X}_i]= \mathbb{E}[\tilde{Y}_i] = \ell\left(\frac{\gamma n}{k}-d - cn d\right) \frac{\lambda}{n}$. We will set $d$ so that we have 
\begin{equation}
d \leq \left \lfloor \mathbb{E}[\tilde{X}_i]  \right \rfloor = \left \lfloor \mathbb{E}[\tilde{Y}_i]  \right \rfloor. \label{eq:requirement1}    
\end{equation}
Then, using properties of the binomial distribution and independence of $\tilde{X}_i$ and $\tilde{Y}_i$, we have that 
\[\prob{\tilde{X}_i \geq d, \tilde{Y}_i \geq d} \geq \frac{1}{4}.\]
It follows that $\mathbb{E}[Z] \geq \frac{K_1}{4}$, and for any $\delta \in (0,1)$, a Chernoff bound yields
\[\prob{Z \geq (1-\delta)\frac{K_1}{4}} \geq e^{-\frac{\delta^2 K_1}{8}} = e^{-\Theta(n)}.\]
By requiring 
\begin{equation}
cn \leq (1-\delta) \frac{K_1}{4}, \label{eq:requirement2}    
\end{equation}
we see that $\mathbb{P}(\mathcal{E}) = 1 - e^{-\Omega(n)}$. 

It follows that on the event $\mathcal{E}$, the graph $\overline{G}$ can be coupled to a bi-colored bipartite graph $H$ with at least $cn$ vertices on each side, a perfect (red) matching, and random blue edges which exist with probability $\frac{\lambda d^2}{2n}$ independently, due to \eqref{eq:edge-prob} (and independently of $\mathcal{E}$). 
To apply Lemma \ref{lemma:bipartite-ER}, we need to verify 
\[\frac{\lambda d^2}{2n} \cdot cn \geq 256 \log(32 e).\]
We simply let $c = \frac{512 \log (32 e)}{\lambda d^2}$ to ensure the above. It remains to show \eqref{eq:requirement1} and \eqref{eq:requirement2}. 
To show \eqref{eq:requirement1}, observe that
\begin{align}
\left \lfloor \mathbb{E}[\tilde{X}_i]  \right \rfloor &\geq \frac{1}{2}\mathbb{E}[\tilde{X}_i] \nonumber \\
&= \frac{\ell}{2}\left(\frac{\gamma n}{k}-d - cn d\right) \frac{\lambda}{n} \nonumber\\
&\geq \frac{\ell}{2}\left(\frac{\gamma n}{k} - 2cn d\right) \frac{\lambda}{n}\nonumber\\
&= \frac{\ell \lambda }{2}\left(\frac{\gamma }{k} - 2c d\right)\nonumber\\
&=\frac{\ell \lambda }{2}\left(\frac{\gamma }{k} - \frac{1024 \log (32 e)}{\lambda d}\right). \label{eq:d-condition}
\end{align}
where the inequality holds for $n$ sufficiently large. Recall the definitions of $\ell$ and $d$ in the lemma statement. 
Then \eqref{eq:d-condition} is lower-bounded by
\begin{align*}
\frac{\ell \lambda}{2}\left(\frac{\gamma}{k} - \frac{1024 \log (32 e)}{\lambda } \cdot \frac{\lambda \gamma}{2^{11} \log (32 e) k} \right)&= \frac{\ell \lambda}{2} \cdot \frac{\gamma}{2k}=  \frac{2^{11} \log(32 e) k}{\lambda \gamma} \alpha = d,
\end{align*} 
hence verifying \eqref{eq:requirement1}.

Finally,
\begin{align*}
cn &= \frac{512 \log(32e)}{\lambda d^2}n \leq \frac{\gamma n \epsilon^2}{16(2\ell+k)k(\epsilon+2k)^2} = \frac{K_1}{4},
\end{align*}
where the inequality holds for $\alpha$ sufficiently large.

By Lemma \ref{lemma:bipartite-ER}, we conclude that with probability at least $1 - \exp\left(-\frac{\lambda d^2}{2^{15}n} (cn)^2 \right) - e^{-\Omega(n)} = 1 - e^{-\Omega (n)}$, the graph $\overline{G}$ contains at least $\exp\left(\frac{cn}{20} \right)$ distinct alternating cycles of length at least $\frac{3cn}{4}$, conditioned on any realization of $H^*$.
\end{proof}

The proof of Lemma \ref{lmm:Mbad} now follows directly.
\begin{proof}[Proof of Lemma \ref{lmm:Mbad}]
Each distinct alternating cycle $C \in \mathcal{C}$ induces a distinct $k$-factor. Furthermore, an alternating cycle of length $\delta n$ induces a $k$-factor $H$ satisfying $\ell(H, H^*) = \frac{2\delta}{k}$.
Taking $c_1, c_2$ from Lemma \ref{lemma:cycle-construction}, we therefore let $\delta \leq c_0 := c_2$. Lemma \ref{lemma:cycle-construction} implies that $|\Mbad| \geq e^{c_1 n}$ with probability $1-e^{-\Omega(n)}$, for any realization of $H^*$.
\end{proof}

\begin{remark}
Our strategy of constructing trees and linking them via a sprinkling procedure is very similar to \cite{Ding2023}. However, there are a few differences. First, recall that the model considered by \cite{Ding2023} is a planted matching where the background graph is a bipartite \ER random graph, while our background graph is unipartite. The tree construction process is essentially the same, though we need to take care to ensure that every blue vertex in the tree is followed by $k$ red edges. We modify the way in which trees are linked, since our graph is not bipartite, though it is convenient to designate reserved edges as being either ``left'' or ``right.'' Our choice to name the endpoints of the reserved edges as ``tree-facing'' or ``linking'' is similarly for ease of analysis.

As in \cite{Ding2023}, we reduce our problem of connecting the trees into cycles to exhibiting a well-connected bi-colored bipartite graph with the trees as nodes, where blue edges are independent and red edges form a perfect matching. However, we follow a different path to constructing the desired bipartite graph, specifically in the way we avoid collisions.  
While our approach identifies trees to include in a sequential manner, the approach of \cite{Ding2023} instead computes the number of non-colliding edges that each tree is connected to, and argues that many trees (a suitable linear number) are connected to many (a suitably large constant) number of non-colliding edges. 
\end{remark}

\section{Proofs for partial recovery}
We first present the proof for the positive direction of partial recovery. 
\begin{proof}[Proof of \prettyref{thm:partial}] Observe that if $u$ has degree $k$ in $G$, then all edges incident to $u$ must be planted. It follows that the edges $(u,v)$ contributing to $\hat{H} \triangle H^*$ are such that neither $u$ nor $v$ has degree $k$.

If $u$ is isolated in the background graph $G_0$, then $u$ will have degree $k$ in $G$. Letting $X$ be the number of isolated vertices in $G_0$, we see that
\[\frac{|\hat{H} \triangle H^*|}{|H^*|} \leq \frac{|H^*| - \frac{1}{2} k X}{|H^*|} = 1 - \frac{X}{n}.\]
Here, the factor of $1/2$ accounts for the possibility that both endpoints of a given edge have degree $k$ in $G$. Since each vertex is isolated in $G_0$ with probability $(1-p)^{n-1} =\left(1-\frac{\lambda}{n}\right)^{n-1} \geq e^{-2\lambda}$ (the last inequality is due to $1-x \ge e^{-2x}$ for $0 \le x \le 1/2$), 
it follows that $\mathbb{E}[X] \geq e^{-2\lambda} n$ and 

\[\mathbb{E}\left[\ell(\hat{H}, H^*) \mid H^* \right] \leq 1- \expect{X}/n  \le 1 - e^{-2\lambda}.\]
Moreover, we can derive that
$\expect{X^2} = n(1-p)^{n-1}+n(n-1)(1-p)^{2n-3}$ and so
\[\text{Var}(X) = n(1-p)^{n-1}+n(n-1)(1-p)^{2n-3} - n^2(1-p)^{2(n-1)} = O(n).\]
Thus, by Chebyshev's inequality, we get that 
$$
\prob{\ell(\hat{H}, H^*) \le 1-  \frac{1}{2} e^{-2\lambda} \mid H^* }
\ge \prob{X \ge \frac{1}{2} \expect{X}  } \ge 1- \frac{4 \var(X) }{ \expect{X}^2 } \ge 1- O(1/n).
$$
Moreover, observe that all edges incident to degree-$k$ vertices in $G$ are included in every $k$-factor in $G$. Therefore, all $k$-factors in $G$
agree on at least $Xk/2$ edges. Since with probability at least $1-O(1/n)$, $X \ge \expect{X}/2 \ge e^{-2\lambda} n /2$, it follows that 
all $k$-factors in $G$
agree on at least $e^{-2\lambda} n k/4$ edges
and in particular, share at least a $e^{-2\lambda}/2$ fraction of their edges with $H^*$. 
\end{proof}

Next, we present the proof of~\prettyref{thm:nothing}, the negative direction of partial recovery.
Recall from~\prettyref{eq:posterior-tail} that while a random draw $\tilde{H}$ from the posterior distribution~\prettyref{eq:posterior_distribution} may not minimize the reconstruction error, its error is at most twice the minimum.
Thus, it suffices to analyze the posterior sample $\tilde{H}$, which relies on the following two lemmas. The first is a variation of \cite[Lemma 3.9]{Mossel2023}, which provides a high-probability lower bound on the total number of $k$-factors in the observed graph $G.$
\begin{lemma}\label{lemma:nothing-helper1}
Let $Z(G)=|\calH(G)|$ denote the number of $k$-factors contained in graph $G.$
 Let $\mathbb{Q}$ denote the distribution of \ER random graph $\calG(n,p)$ and $G_0 \sim \mathbb{Q}$. Then for any $\epsilon>0$, it holds that 
\begin{align}
\prob{Z(G) \le\epsilon  \mathbb{E}_{G_0
\sim \mathbb{Q}}[Z(G_0)] }\le \epsilon.
\label{eq:nothing-helper1}
\end{align}
\end{lemma}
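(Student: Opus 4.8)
\textbf{Proof plan for Lemma~\ref{lemma:nothing-helper1}.}
The key is a change-of-measure identity relating the planted distribution to the null distribution $\mathbb{Q}$. Write $\mathbb{P}$ for the law of the observed graph $G$ in the planted $k$-factor model (marginalizing over the uniform choice of $H^*$ and the background $G_0 \sim \calG(n,p)$). The plan is to show that the likelihood ratio $\frac{d\mathbb{P}}{d\mathbb{Q}}(G)$ is proportional to $Z(G)$, so that a large value of $Z(G)$ is ``typical'' under $\mathbb{P}$ and hence cannot be too small with more than probability $\epsilon$ under $\mathbb{P}$. Concretely, since under $\mathbb{P}$ we first draw $G_0 \sim \mathbb{Q}$, then pick $H^*$ uniformly among a fixed number $N$ of $k$-factors, then add its edges, a graph $G$ arises in the planted model iff $G$ contains at least one $k$-factor, and the number of $(G_0, H^*)$ pairs producing a given such $G$ is exactly $Z(G)$: for each $k$-factor $H \subseteq G$, setting $H^* = H$ and $G_0 = G \setminus (H \setminus G_0)$ — more carefully, the edges of $G$ not in $H$ must all be present in $G_0$, and the edges of $H$ may or may not be in $G_0$, but since adding $H$ is idempotent, we only need $G_0 \supseteq G \setminus H$ restricted to non-$H$ edges to agree with $G$ and the $H$-edges of $G_0$ to be arbitrary. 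The clean way to state this is the standard identity $\frac{d\mathbb{P}}{d\mathbb{Q}}(G) = \frac{1}{N} \sum_{H \in \calH} \frac{\indc{H \subseteq G}}{\prob{G_0 \supseteq G\setminus H \text{ on non-}H\text{ edges}}} \cdot (\text{correction})$; following \cite[Lemma 3.9]{Mossel2023} this simplifies, after accounting for the edge-probability weights, to $\frac{d\mathbb{P}}{d\mathbb{Q}}(G) = \frac{Z(G)}{\mathbb{E}_{G_0 \sim \mathbb{Q}}[Z(G_0)]}$.

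Granting this identity, the lemma is immediate: for any event $A$,
\begin{align*}
\prob{G \in A} = \mathbb{E}_{G_0 \sim \mathbb{Q}}\left[ \frac{Z(G_0)}{\mathbb{E}_{\mathbb{Q}}[Z]} \indc{G_0 \in A} \right],
\end{align*}
so taking $A = \{ Z(G) \le \epsilon\, \mathbb{E}_{\mathbb{Q}}[Z] \}$ gives
\begin{align*}
\prob{Z(G) \le \epsilon\, \mathbb{E}_{\mathbb{Q}}[Z]} = \frac{1}{\mathbb{E}_{\mathbb{Q}}[Z]} \mathbb{E}_{G_0 \sim \mathbb{Q}}\left[ Z(G_0) \indc{Z(G_0) \le \epsilon \,\mathbb{E}_{\mathbb{Q}}[Z]} \right] \le \frac{\epsilon\, \mathbb{E}_{\mathbb{Q}}[Z]}{\mathbb{E}_{\mathbb{Q}}[Z]} = \epsilon,
\end{align*}
which is exactly \eqref{eq:nothing-helper1}. (Note $\mathbb{E}_{\mathbb{Q}}[Z] > 0$ since $Z \ge 0$ and is positive with positive probability whenever $p>0$; the degenerate case is trivial.)

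The only real work is establishing the likelihood-ratio identity, so that is the main obstacle. The subtlety is bookkeeping the edge-probability weights: in the planted model the $k$-factor edges are present deterministically while in $\mathbb{Q}$ each edge is present with probability $p$, so the contribution of each $k$-factor $H \subseteq G$ to $\frac{d\mathbb{P}}{d\mathbb{Q}}(G)$ carries a factor $\prod_{e \in H} \frac{1}{p}$ times $\prod_{e \in H}$ (the probability an $H$-edge is present or absent in $G_0$, summed) $= 1$; since every $k$-factor has exactly $kn/2$ edges, this weight $p^{-kn/2}$ is the same for all $H$, so it factors out and cancels between numerator and $\mathbb{E}_{\mathbb{Q}}[Z] = \binom{\text{counts}}{}\cdots$, leaving precisely $Z(G)/\mathbb{E}_{\mathbb{Q}}[Z]$. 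Because every graph in $\calH$ has the same number of edges, this cancellation is exact and clean — this is the feature that makes the argument of \cite[Lemma 3.9]{Mossel2023} apply verbatim — and no density or balance hypothesis on $\calH$ is needed. I would therefore structure the proof as: (i) recall the sampling description of $\mathbb{P}$; (ii) compute $\frac{d\mathbb{P}}{d\mathbb{Q}}$ pointwise and observe the uniform edge count makes it equal $Z(G)/\mathbb{E}_{\mathbb{Q}}[Z]$; (iii) apply the one-line Markov-type argument above.
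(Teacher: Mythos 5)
Your proposal is correct and follows essentially the same route as the paper: compute the likelihood ratio $\mathbb{P}(G)/\mathbb{Q}(G) = Z(G)/\mathbb{E}_{G_0\sim\mathbb{Q}}[Z(G_0)]$ (using that every $k$-factor has exactly $kn/2$ edges, so $\mathbb{E}_{\mathbb{Q}}[Z(G_0)] = |\calH|\,p^{kn/2}$ and the per-$H$ weight $p^{-kn/2}$ cancels), then conclude by the one-line change-of-measure bound. The only caveat is your intermediate claim that the number of $(G_0,H^*)$ pairs producing a given $G$ equals $Z(G)$, which is neither needed nor literally true (the $H^*$-edges of $G_0$ are unconstrained); the likelihood-ratio computation you then carry out is the clean argument and matches the paper's proof.
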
 
We remark that the expectation in $\mathbb{E}_{G_0 \sim \mathbb{Q}}[Z(G_0)]$ is taken over the distribution of the purely \ER random graph, while the probability in~\prettyref{eq:nothing-helper1} is taken over the distribution of the planted $k$-factor model. The proof of Lemma~\ref{lemma:nothing-helper1} follows from $\mathbb{P}(G)/\mathbb{Q}(G)= Z(G)/\mathbb{E}_{G_0\sim \mathbb{Q}}[Z(G_0)]$ and a simple change of measure. 
Note that the result in Lemma~\ref{lemma:nothing-helper1} is an instance of the so-called ``planted trick'', a technique first introduced in the study of random constraint satisfaction problems~\cite{achlioptas2008algorithmic}, and more recently employed to establish ``nothing'' results in statistical inference problems, such as group testing~\cite{coja2022statistical}; see also \cite{Mossel2023}. 

The next lemma bounds the expected number of $k$-factors in the observed graph $G$ that share $\ell$ common edges with $H^*.$
\begin{lemma}\label{lemma:nothing-helper2}
For all $\ell \in \{0,1, \dots, kn/2\}$, let 
$$
Z_\ell(H^*, G) = \sum_{H \in \mathcal{H}: |H \cap H^*| =\ell}  \indc{\text{$H$ 
 is a $k$-factor in $G$}}. 
$$
It holds that
\begin{align*}
\expect{Z_\ell(H^*, G)} 
\le \left( nkp \right)^{kn/2-\ell}.
\end{align*}
\end{lemma}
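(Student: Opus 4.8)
The plan is to bound $\expect{Z_\ell(H^*, G)}$ by a union bound over all $k$-factors $H$ sharing exactly $\ell$ edges with $H^*$, controlling both the number of such $H$ and the probability that each appears in $G$. First I would write
\[
\expect{Z_\ell(H^*, G)} = \sum_{H \in \mathcal{H}: |H \cap H^*| = \ell} \prob{H \text{ is a $k$-factor in } G \mid H^*}.
\]
For a fixed $H$ with $|H \cap H^*| = \ell$, the edges of $H$ not already in $H^*$ (of which there are $|H| - \ell = kn/2 - \ell$) must each be present in the background graph $G_0$, which happens with probability $p^{kn/2 - \ell}$ since those edges are independent of $H^*$. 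Hence each term equals $p^{kn/2-\ell}$, and it remains to bound the number of $k$-factors $H$ with $|H \cap H^*| = \ell$ by $(nk)^{kn/2-\ell}$.

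To count such $H$: the edges of $H$ partition into the $\ell$ edges shared with $H^*$ and the $kn/2 - \ell$ new edges. Rather than choosing which $\ell$ edges of $H^*$ are retained and then how $H$ behaves on the remaining vertices (which is delicate because degree constraints couple the two parts), I would instead bound crudely by building $H$ edge-by-edge. A cleaner route is to use the alternating-circuit viewpoint as in Lemma~\ref{lmm:enumeration}: $t := kn/2 - \ell$ is (at least) the number of edges in $H \triangle H^*$ on the $H$-side divided by... actually, since $|H \cap H^*| = \ell$ forces $|H \setminus H^*| = kn/2 - \ell$ and $|H^* \setminus H| = kn/2 - \ell$, we have $|H \triangle H^*| = 2(kn/2 - \ell) = 2t$ with $t = kn/2 - \ell$. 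So by Lemma~\ref{lmm:enumeration}, the number of such $H$ is at most $(kn)^t = (kn)^{kn/2 - \ell}$. Combining, $\expect{Z_\ell(H^*,G)} \le (kn)^{kn/2-\ell} p^{kn/2-\ell} = (nkp)^{kn/2-\ell}$, which is exactly the claimed bound.

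The main obstacle, such as it is, is simply matching up the indexing: the lemma statement conditions on $|H \cap H^*| = \ell$ whereas Lemma~\ref{lmm:enumeration} is phrased in terms of $|H \triangle H^*| = 2t$, so I need the elementary identity $|H \triangle H^*| = 2(|H| - |H \cap H^*|) = 2(kn/2 - \ell)$ valid because both $H$ and $H^*$ have exactly $kn/2$ edges. After that the argument is a one-line union bound plus an independence observation. One should double-check the edge cases $\ell = kn/2$ (forcing $H = H^*$, giving the bound $1$) and small $\ell$, but these are consistent with the stated inequality.
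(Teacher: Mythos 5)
Your proposal is correct and follows essentially the same route as the paper: linearity of expectation, the observation that each such $H$ appears with probability $p^{kn/2-\ell}$, and the counting bound $(kn)^{kn/2-\ell}$ from \prettyref{lmm:enumeration} via the identity $|H \triangle H^*| = 2(kn/2-\ell)$. The only difference is that you spell out the translation between $|H\cap H^*|=\ell$ and $|H\triangle H^*|=2t$, which the paper leaves implicit.
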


With Lemma~\ref{lemma:nothing-helper1} and Lemma~\ref{lemma:nothing-helper2}, we are ready to bound the reconstruction error of the posterior sample $\tilde{H}$.  
\begin{proof}[Proof of Theorem \ref{thm:nothing}]
It suffices to prove that 
\begin{align}
\prob{ \ell(H^\ast, \tilde{H}) \le  2(1-\delta) } = \prob{ | H^\ast \cap \tilde{H}|  \ge \delta n k}
\le  3 (k^2/c)^{-nk/4}. \label{eq:posterior_desired_bound}
\end{align}
for $\delta= \frac{\log(k^2/c)}{\log(nkp)}$. Observe that 
\begin{align*}
\prob{ | H^\ast \cap \tilde{H}|  \ge \delta n k}
=\expect{\mu_G\left( \{ H: | H^\ast \cap H| \ge \delta nk \right)}
= \expect{ \frac{1}{Z(G)} \sum_{\ell \ge \delta nk} Z_\ell(H^*, G)}. 
\end{align*}
Lemmas \ref{lemma:nothing-helper1} and \ref{lemma:nothing-helper2} imply that for any $\epsilon > 0$ (possibly depending on $n$)
\begin{align*}
& \expect{ \frac{1}{Z(G)} \sum_{\ell \ge \delta nk} Z_\ell(H^*, G)} \\
& = \expect{ \frac{1}{Z(G)} \sum_{\ell \ge \delta nk} Z_\ell(H^*, G) \indc{Z(G) > \epsilon  \mathbb{E}_{\mathbb{Q}}(G)}}
+ \expect{ \frac{1}{Z(G)} \sum_{\ell \ge \delta nk} Z_\ell(H^*, G) \indc{Z(G) \le \epsilon  \mathbb{E}_{\mathbb{Q}}(G)}} \\
& \le \expect{ \frac{1}{\epsilon \mathbb{E}_{\mathbb{Q}}[Z(G)]} \sum_{\ell \ge \delta nk} Z_\ell(H^*, G) }
+\prob{ Z(G) \le \epsilon  \mathbb{E}_{\mathbb{Q}}[Z(G)]} \\
& \le \frac{1}{\epsilon \mathbb{E}_{\mathbb{Q}}[Z(G)]} \sum_{\ell \ge \delta nk} (nk p)^{nk/2-\ell} +\epsilon.
\end{align*}
Note that 
$
\mathbb{E}_{\mathbb{Q}}[Z(G)] = M p^{nk/2},
$
where $M$ is the number of labeled $k$-factors in the complete graph. It is known (cf.~\cite[Corollary 2.17]{bollobas2001random})
that
$$
M \sim \sqrt{2} e^{-(k^2-1)/4} \left( 
\frac{k^{k/2} }{e^{k/2} k! } \right)^n  n^{nk/2}.
$$
Therefore,
$
\mathbb{E}_{\mathbb{Q}}[Z(G)]  \ge (c np/k)^{nk/2}
$
for some universal constant $c<1.$ Hence,
\begin{align}
\expect{ \frac{1}{Z(G)} \sum_{\ell \ge \delta nk} Z_\ell(H^*, G)}
\le \frac{1}{\epsilon}  (k^2/c)^{nk/2} \sum_{\ell \ge \delta nk} (nkp)^{-\ell} +\epsilon
\le \frac{2}{\epsilon}  (k^2/c)^{nk/2} (nkp)^{-\delta nk} + \epsilon. \label{eq:posterior}
\end{align}
Setting $\epsilon^2= (k^2/c)^{nk/2} (nkp)^{-\delta nk} $ and recalling $\delta = \frac{\log(k^2/c)}{\log(nkp)}$, we have
\begin{align*}
\frac{2}{\epsilon}  (k^2/c)^{nk/2} (nkp)^{-\delta nk}  + \epsilon 
\le 3 (k^2/c)^{nk/4} (nkp)^{-\delta nk/2} 
\le 3 (k^2/c)^{-nk/4}.
\end{align*}
Substituting the last display into \eqref{eq:posterior} yields the desired bound~\prettyref{eq:posterior_desired_bound}.

Moreover, by Markov's inequality, 
\begin{align*}
\prob{\mu_G\left( \{ H: | H^\ast \cap H| \ge \delta nk \right) \ge (k^2/c)^{-nk/8}
}
&\le \frac{\expect{\mu_G\left( \{ H: | H^\ast \cap H| \ge \delta nk \right)} }{(k^2/c)^{-nk/8}}\\
&\le 3(k^2/c)^{-nk/8}  
\end{align*}
In other words, with probability at least $1- 3(k^2/c)^{-nk/8}$, at least $1-(k^2/c)^{-nk/8}$
fraction of $k$-factors in graph $G$ share at most a $2\delta$-fraction of their edges with $H^*$.
\end{proof}
We now provide the proofs of Lemmas \ref{lemma:nothing-helper1} and \ref{lemma:nothing-helper2}. 

\begin{proof}[Proof of Lemma \ref{lemma:nothing-helper1}]
Note that 
\begin{align*}
\mathbb{P}(G)  = \sum_{H\in \calH} \mathbb{P}(H^*=H) \mathbb{P}(G \mid H^*=H )  = \frac{1}{|\calH|} \sum_{H \in \calH }
\prod_{e \in H } \indc{G_e=1}
\prod_{e \notin H} p^{G_e} (1-p)^{1-G_e}
\end{align*}
and $\mathbb{Q}(G)= \prod_{e \in \binom{[n]}{2}} p^{G_e} (1-p)^{1-G_e}.$ Therefore,
$$
\frac{\mathbb{P}(G)}{\mathbb{Q}(G)}
= \frac{1}{|\calH|} \sum_{H \in \calH }
\prod_{e \in H} \frac{\indc{G_e=1}}{p^{G_e} (1-p)^{1-G_e}}
=\frac{1}{|\calH|} \sum_{H \in \calH }
\prod_{e \in H} \frac{\indc{G_e=1}}{p^{G_e}} 
= \frac{Z(G)}{|\calH| p^{nk/2}} 
= \frac{Z(G)}{\mathbb{E}_{G_0\sim \mathbb{Q}}[Z(G_0)]}.
$$
Therefore,
\begin{align*}
\prob{Z(G) \le \epsilon  \mathbb{E}_{G_0 \sim \mathbb{Q}}[Z(G_0)] }
& =\mathbb{E}_{\mathbb{Q}}\left[ \frac{\mathbb{P}(G)}{\mathbb{Q}(G)} \indc{Z(G) \le \epsilon  \mathbb{E}_{G_0\sim \mathbb{Q}}[Z(G_0)]}\right] \\
& =\mathbb{E}_{\mathbb{Q}}\left[  \frac{Z(G)}{\mathbb{E}_{G_0\sim \mathbb{Q}}[Z(G_0)]} \indc{Z(G) \le \epsilon  \mathbb{E}_{G_0 \sim \mathbb{Q}}[Z(G_0)]}\right]
\le \epsilon.
\end{align*}
\end{proof}

\begin{proof}[Proof of Lemma \ref{lemma:nothing-helper2}]
By definition,
\begin{align*}
\expect{Z_\ell(H^*, G)} 
&=\sum_{H\in \mathcal{H}: | H^\ast \cap H| =\ell}
\prob{\text{$H$ is a $k$-factor in $G$}}\\
&=\sum_{H\in \mathcal{H}: | H^\ast \cap H| =\ell}
p^{kn/2-\ell}
\le (nkp)^{kn/2-\ell},
\end{align*}
where the last inequality follows from~\prettyref{eq:M_ell_bound} in~\prettyref{lmm:enumeration} that 
$
\left|\{H \in \mathcal{H}: | H^\ast \cap H|\}\right| \le (nk)^{nk/2-\ell}.
$
\end{proof}

\section{Proofs for iterative pruning algorithm}

In this section, we prove~\prettyref{thm:size}, the performance guarantee for the iterative pruning algorithm. 

\subsection{Proof of Error Upper Bound}
In this subsection, we show that \[\prob{e \in C_n \mid H^* } \le (1-\rho)^2+o(1).\] 
We need to appropriately define the local neighborhood and the branching process. 
\begin{definition}[Alternating $t$-neighborhood]~\label{def:neighorhood}
Given a planted edge $e$ and integer $t \ge 0,$  we define its 
alternating $t$-neighborhood $G_e^t$
as the subgraph formed by all alternating paths of length no greater than $t$ starting from edge $e$ (not counting $e$).  Let $\partial G_e^t$ denote the set of nodes from which the shortest alternating path to $e$ has exactly $t$ edges (not counting $e$). 
\end{definition}

\begin{definition}[Alternating $t$-branching process]
Given a planted edge $e$ and integer $t \ge 0$, we define an 
alternating $t$-branching process $T_e^t$ recursively as follows. 
Let $T_e^0$ be the single edge $e$ 
and assign its two endpoints to $\partial T_e^0$. 
For all $0 \le s \le t-1$, if $s$ is even (resp.\ odd), for each vertex $u$ in $\partial T_e^s$, we include an independent $\Pois(\lambda)$ number of blue edges (resp.\ a fixed $k$ number of red edges) $(u,v)$ to $T_e^{s+1}$
and include $v$ in $\partial T_e^{s+1}.$
\end{definition}

\begin{lemma}[Coupling lemma]~\label{lmm:coupling}
Suppose $ t \ge 0$ and $(2k\lambda+2)^t \log n=n^{o(1)}$ (for which $t = o(\log n)$ suffices). For any planted edge $e,$ there exists a  coupling between $G_e^{2t}$ and $T_e^{2t}$ (with an appropriate vertex mapping) such that 
$$
\prob{G_e^{2t} = T_e^{2t} } \ge 1- n^{-\Omega(1).}
$$
\end{lemma}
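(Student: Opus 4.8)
\textbf{Proof plan for Lemma~\ref{lmm:coupling}.}

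The plan is to construct the coupling via a simultaneous breadth-first exploration of the alternating neighborhood $G_e^{2t}$ in the observed graph $G$ and of the alternating branching process $T_e^{2t}$, layer by layer, and to argue that the exploration never encounters a ``bad event'' that would break the coupling, except with probability $n^{-\Omega(1)}$. The coupling maintains a bijection between the vertices of $G_e^{2t}$ and those of $T_e^{2t}$ discovered so far, and we grow both structures in lockstep: when we pop a vertex $u$ at an even layer $s$ (so that the next edges to add are blue/unplanted), we reveal in $G$ the unplanted edges from $u$ to all as-yet-unvisited vertices; in $T$, we draw an independent $\Pois(\lambda)$ number of fresh children. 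The key point is that in $G$ the number of new unplanted neighbors of $u$ among the remaining $n - O((2k\lambda+2)^t)$ unvisited vertices is $\mathrm{Binom}(n - O((2k\lambda+2)^t), \lambda/n)$, which couples to $\Pois(\lambda)$ with total variation error $O(\lambda/n)$ per vertex by a standard Poisson approximation (Le Cam / Stein–Chen). At an odd layer $s$, the next edges are red/planted; here $G_e^s$ deterministically attaches exactly the $k$ planted neighbors of $u$ (minus $u$'s planted parent, but one can include all $k$ and handle the one-edge discrepancy, or more cleanly note the construction of $T$ also attaches $k$ and the parity bookkeeping matches), which is precisely what $T$ does, so the red layers contribute no coupling error at all — the only randomness is in the blue layers.

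The steps, in order, are: (i) bound the total size of $T_e^{2t}$: since each blue layer multiplies the frontier by a $\Pois(\lambda)$ factor (mean $\lambda$) and each red layer by exactly $k$, the expected total size is $O((k\lambda)^t)$, and a crude union/Markov bound (or a Chernoff bound on the sum of Poissons) shows $|T_e^{2t}| \le (2k\lambda+2)^t =: L$ with probability $1 - n^{-\Omega(1)}$, using the hypothesis $(2k\lambda+2)^t \log n = n^{o(1)}$; (ii) condition on $|T_e^{2t}| \le L$ and run the paired exploration; the coupling fails only if one of the following ``collision'' events occurs: a revealed blue edge from $u$ lands on a vertex already in the explored set (a cycle closes in $G$ but not in the tree $T$), or the Poisson–Binomial coupling fails for some vertex. (iii) Bound the failure probability by a union bound over the at most $L$ vertices explored: each vertex reveals at most $O(n)$ potential blue edges, each present with probability $\lambda/n$, and the probability that any of them hits the $\le L$ already-explored vertices is at most $L \cdot \lambda/n$, so over all $\le L$ explored vertices the collision probability is $O(L^2 \lambda/n) = n^{o(1)}/n = n^{-1+o(1)}$; similarly the aggregate Poisson-coupling error is $O(L \cdot \lambda/n) = n^{-1+o(1)}$. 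Summing, the total coupling failure probability is $n^{-\Omega(1)}$, and on the complement $G_e^{2t} = T_e^{2t}$ under the vertex identification.

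The main obstacle — really the only delicate point — is the bookkeeping at the red (planted) layers, which is where this lemma differs from the $k=1$ planted-matching case. When we add the $k$ planted neighbors of a frontier vertex $u$, one of them may already be present in the explored portion of $G$ (e.g. $u$'s own planted parent is already there, which is expected and handled by the parity of the construction, but a \emph{different} planted neighbor could also coincide with a previously explored vertex, creating a short alternating cycle through red edges). One must argue that such a red-red collision is also rare; but since the reserved/explored set has size $\le L = n^{o(1)}$ and the planted graph $H^*$ is a \emph{fixed} $k$-regular graph, the number of planted edges with both endpoints in any fixed set of size $L$ is at most $kL/2$, and the probability that the exploration reaches such a configuration can be folded into the same union bound once we note that the relevant vertices are being revealed through the \emph{random} blue edges — so the event ``a red collision happens'' is dominated by ``a blue edge connects to a vertex whose planted neighborhood intersects the explored set,'' which again has probability $O(L^2 \lambda /n)$. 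With this observation the red layers contribute no new error term of larger order, and the bound $n^{-\Omega(1)}$ follows. (A clean alternative is to define $G_e^{2t}$ and $T_e^{2t}$ so that red layers are attached deterministically and track which red children are genuinely new; the discrepancy set then has size $O(L^2\lambda/n)$ in expectation, giving the same conclusion by Markov.)
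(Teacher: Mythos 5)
Your proposal follows essentially the same route as the paper: a layer-by-layer paired exploration, a Binomial--Poisson total-variation coupling at the blue layers, and a union bound over collision events (a blue edge returning to the explored set, two frontier vertices sharing a blue or red neighbor), with the red-collision events dominated by the randomness of the blue edges that brought in the frontier vertices --- this is exactly how the paper's events $E^{2t}$, $E^{2t+1}$ and Lemma~\ref{lmm:EgivenC} handle the $k\ge 2$ bookkeeping you single out as the delicate point. The one quantitative slip is in your step (i): the bound $|T_e^{2t}|\le (2k\lambda+2)^t$ does \emph{not} hold with probability $1-n^{-\Omega(1)}$ when $t$ is constant (already for $t=1$ the Poisson tail gives only a constant), so you need the extra $\log n$ slack per layer, which is precisely why the paper's event $C^{2t}$ uses thresholds of the form $(2\lambda k+2)^{s-1}\log n$; taking $L=(2k\lambda+2)^t\log n$ is harmless under the stated hypothesis and all your subsequent $O(L^2\lambda/n)=n^{-1+o(1)}$ estimates go through unchanged.
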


It is well known that the standard notion of $t$-hop neighborhood of a given vertex in an \ER random graph with a constant average degree $\lambda$  can be coupled with a Galton--Watson tree with $\Pois(\lambda)$ offspring distribution with high probability for $t=o(\log n)$, see, e.g.,~\cite[Proposition 4.2]{mossel2015reconstruction} and~\cite[Lemma 10, Appendix C]{hajek2018recovering}. \prettyref{lmm:coupling} follows from similar ideas. However, we need to properly deal with the extra complications arising from two colored edges. For instance, we may have cycles solely formed by red edges in the local neighborhood; however, this will not be included in the alternating $t$-neighborhood as per~\prettyref{def:neighorhood}.

Let $C^{2t}$ denote the event 
$$
C^{2t} = \{\left|\partial G_e^{2s-1} \right| \le 2\lambda (2\lambda k+2)^{s-1} \log n, \, 
\left| \partial G_e^{2s} \right| \le 2\lambda k (2\lambda k+2)^{s-1} \log n
, \, 
\forall 1 \le s \le t \}.
$$
The event $C^{2t}$ is useful to ensure that $[n]\backslash V(G_e^{2t})$ is large enough so that 
$\Binom(n-|V(G_e^{2t})|, \lambda/n)$ can be coupled to $\Pois(\lambda)$ with small total variational distance. The following lemma shows that $C^{2t}$ happens with high probability conditional on $C^{2(t-1)}$.

\begin{lemma}\label{lmm:C2t}
For all $t \ge 1$, 
$$
\prob{C^{2t} \mid G_e^{2(t-1)}, C^{2(t-1)} }
\ge 1- n^{- \lambda/3 },
$$
and conditional on $C^{2t}$,
$\left|  V(G_e^{2t}) \right| \le (2\lambda k +2)^{t+1} \log n$. 
\end{lemma}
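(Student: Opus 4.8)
\textbf{Proof plan for Lemma \ref{lmm:C2t}.}
The plan is to observe first that, conditional on $C^{2(t-1)}$, the event $C^{2t}$ imposes only the two level-$t$ constraints, namely
\[
|\partial G_e^{2t-1}|\le T:=2\lambda(2\lambda k+2)^{t-1}\log n\qquad\text{and}\qquad |\partial G_e^{2t}|\le kT=2\lambda k(2\lambda k+2)^{t-1}\log n ,
\]
and that the second is \emph{implied deterministically} by the first: layer $2t$ is reached from $\partial G_e^{2t-1}$ along red (planted) edges, and since $H^*$ is $k$-regular each vertex has exactly $k$ planted neighbours, so $|\partial G_e^{2t}|\le k|\partial G_e^{2t-1}|$ whatever the randomness. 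It therefore suffices to bound $\prob{|\partial G_e^{2t-1}|>T \mid G_e^{2(t-1)},C^{2(t-1)}}$.

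For this I would run the standard breadth-first exploration, keeping track of the two edge colours. Revealing $G_e^{2(t-1)}$ inspects only blue edges each of which has an endpoint in some already-explored even layer $\partial G_e^{2s}$ with $s<t-1$ (the red subgraph $H^*$ being fixed throughout); since both $\partial G_e^{2(t-1)}$ and the unexplored set $[n]\setminus V(G_e^{2(t-1)})$ are disjoint from those layers, no blue edge between them has been inspected, so they are fresh i.i.d.\ $\Bern(\lambda/n)$. Hence $|\partial G_e^{2t-1}|$ is at most the number of such present edges, which on $C^{2(t-1)}$ is stochastically dominated by $Y\sim\Binom(Bn,\lambda/n)$, where $B:=2\lambda k(2\lambda k+2)^{t-2}\log n$ is the $C^{2(t-1)}$-guaranteed bound on $|\partial G_e^{2(t-1)}|$ (for $t=1$ take $B:=2$, as $\partial G_e^{0}$ is the pair of endpoints of $e$). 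A one-line check gives $T\ge 2\lambda B=2\,\mathbb{E}[Y]$ and $T\ge 2\lambda\log n$, so applying the multiplicative Chernoff bound $\prob{Y\ge a\,\mathbb{E}[Y]}\le\exp(\mathbb{E}[Y](a-1-a\ln a))$ with $a=T/\mathbb{E}[Y]\ge 2$, together with the elementary inequality $\tfrac1a+\ln a-1\ge\ln 2-\tfrac12>\tfrac16$ valid for every $a\ge 2$, yields $\prob{Y\ge T}\le e^{-T/6}\le e^{-\lambda(\log n)/3}=n^{-\lambda/3}$, which is exactly the claimed conditional bound.

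For the vertex-count assertion, on $C^{2t}$ I would simply add up the pairwise disjoint layers:
\[
|V(G_e^{2t})|=\sum_{s=0}^{2t}|\partial G_e^{s}|\le 2+\sum_{s=1}^{t}(1+k)\,2\lambda(2\lambda k+2)^{s-1}\log n\le 2+\frac{2\lambda(1+k)}{2\lambda k+1}(2\lambda k+2)^{t}\log n .
\]
Because $k\ge1$ gives $\tfrac{2\lambda(1+k)}{2\lambda k+1}\le\tfrac{4\lambda k}{2\lambda k}=2$, and $2\le 2\lambda k(2\lambda k+2)^{t}\log n$ for $n$ large, the right-hand side is at most $(2\lambda k+2)^{t+1}\log n$, as required.

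None of the steps is deep. The two spots that need genuine care are: (i) the bookkeeping in the two-coloured exploration — checking that conditioning on $G_e^{2(t-1)}$ really leaves \emph{every} blue edge between the frontier and the unexplored set unrevealed (note that some blue edges incident to the frontier \emph{have} been revealed, but only those going to earlier layers, and all of them equal to $0$); and (ii) calibrating the Chernoff deviation so that the tail is $\le n^{-\lambda/3}$ uniformly, both in the ``thin frontier'' regime where $T\gg\mathbb{E}[Y]$ and in the ``thick frontier'' regime where $T$ is only a constant multiple of $\mathbb{E}[Y]$ but still $T\ge2\lambda\log n$ — the inequality $\tfrac1a+\ln a-1\ge\tfrac16$ for $a\ge2$ is precisely what lets a single estimate cover both regimes.
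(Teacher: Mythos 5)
Your proposal is correct and follows essentially the same route as the paper's proof: the even-layer bound is obtained deterministically from the odd-layer bound via $k$-regularity, the odd layer is stochastically dominated by a binomial whose trials come from the (conditionally fresh) blue edges leaving the frontier, a Chernoff bound gives the $n^{-\lambda/3}$ tail, and the vertex count follows by summing the layer bounds. The only differences are cosmetic calibrations (you keep the exact frontier bound $B$ and use the $\exp(\mu(a-1-a\ln a))$ form with $a\ge 2$, whereas the paper slightly enlarges the frontier bound to $(2\lambda k+2)^{t-1}\log n$ so the threshold is exactly twice the mean and the standard $e^{-\mu/3}$ bound applies), and both land on the same estimate.
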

\begin{proof}
In this proof, we condition on $G_e^{2(t-1)}$ such that the event $C^{2(t-1)}$ holds. 
Then
$|\partial G_e^{2(t-1)}| \le 2\lambda k (2\lambda k+2)^{t-2} \log n \le  (2\lambda k+2)^{t-1} \log n$. 
For any $u \in \partial G_e^{2(t-1)},$
let $B_u$ denote the number of blue edges connecting $u$ to vertices in $[n]$.
Note that since $u \in \partial G_e^{2(t-1)}$, the shortest alternating path from $e$ to $u$ has $2(t-1)$ edges. Thus, $u$ does not connect to any vertex in $\partial G_e^{2s}$ via a blue edge for all $0 \le s \le t-2.$
Thus $\{B_u\}$'s are stochastically dominated by 
 i.i.d.\ $\Binom(n,\lambda/n)$. 
It follows that $|\partial G_e^{2t-1}|$ is stochastically dominated by
$$
X \sim \Binom\left(   (2k\lambda+2)^{t-1} n \log n , \, \frac{\lambda}{n} \right). 
$$
Note that $\expect{X} = (2k\lambda+2)^{t-1} \lambda \log n
\ge \lambda \log n$ for all $t \ge 1.$
Applying the Chernoff bound for the binomial distribution,  we get
$$
 \prob{ X \ge 2\expect{X}} \le \exp\left(-\expect{X}/3\right)
\le n^{-\lambda/3 }.
$$
 Moreover, for each $u \in \partial G_e^{2t-1}$, let $R_u$ denote the number of incident red edges connecting to vertices in $[n] \backslash V(G_e^{2t-1}).$ Then $R_u \le k$. 
Thus,
$|\partial G_e^{2t}| \le k | \partial G_e^{2t-1}|.$
Hence,
\begin{align*}
\prob{C^{2t} \mid  G_e^{2(t-1)}, C^{2(t-1)} }
& = \prob{| \partial G_e^{2t-1}| \le  2\lambda (2k\lambda+2)^{t-1} \log n \mid G_e^{2(t-1)}, C^{2(t-1)}} \\
& \ge \prob{ X \le  2\lambda (2k\lambda+2)^{t-1} \log n } \\
& \ge \prob{ X \le 2\expect{X}}
\ge 1- n^{-\lambda/3 }. 
\end{align*}
Finally, conditional on $C^{2t},$
\begin{align*}
\left| V(G_e^{2t})\right|
& = |V(G_e^{0})|+  \sum_{s=1}^{t} \left(\left|\partial G_e^{2s-1}\right| + 
\left|\partial G_e^{2s}\right|\right) \\
& \le 2+ (1+k) 2 \lambda \sum_{s=1}^{t}  (2\lambda k+2)^{s-1} \log n \\
& \le 2+ 4\lambda k \frac{(2\lambda k+2)^t -1 }{2\lambda k +1}  \log n
\le 2 (2\lambda k+2)^t \log n \le (2\lambda k+2)^{t+1} \log n.
\end{align*}
\end{proof}

For each vertex $u$, let $N_u^B$ (resp.\ $N_u^R$) denote the set of neighbors  
of $u$ that are connected via a blue (resp.\ red) edge in $G$. 
For $t \ge 0$, let $E^{2t}$ denote the event 
\begin{align}
E^{2t}= \left\{ N_u^B \cap V(G_e^{2t}) = \emptyset, \forall u \in \partial G_e^{2t} \right\} \cap \left\{  N_u^B \cap N_{v}^B =\emptyset, \forall u \neq v \in \partial G_e^{2t} \right\} \label{eq:E_2t_def}
\end{align}
and $E^{2t+1}$ denote the event
\begin{align}
E^{2t+1}=
 \left\{ N_u^R \cap V(G_e^{2t+1}) = \emptyset, \forall u \in \partial G_e^{2t+1} \right\}
 \cap
\left\{  N_u^R \cap N_{v}^R =\emptyset, \forall u \neq v \in \partial G_e^{2t+1} \right\}. \label{eq:E_2t_1_def}
\end{align}

Basically, $E^{2t}$ ensures that when we grow from the $2t$-th hop neighborhood of $e$ to its $(2t+1)$-th hop neighborhood, all the added blue edges are connecting to distinct vertices in $V^{2t}$. Similarly, $E^{2t+1}$ ensures that when we grow from the $(2t+1)$-th hop neighborhood of $e$ to its $(2t+2)$-th hop neighborhood, all the added red edges are connecting to distinct vertices in $V^{2t+1}$. 
Therefore, if $E^s$ holds for all $s=1, \ldots, t$, then $G_e^t$ is a tree.

\begin{lemma}\label{lmm:EgivenC}
For any $t \ge 0$ such that $(2\lambda k+2)^t \log n =n^{o(1)},$
$$
\prob{E^{2t} \cap E^{2t+1} \mid G_e^{2t}, C^{2t} } \ge 1- n^{-1+o(1)}
$$
\end{lemma}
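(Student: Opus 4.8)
\textbf{Proof plan for Lemma~\ref{lmm:EgivenC}.} The plan is to bound the probability of the ``bad'' events $\left(E^{2t}\right)^c$ and $\left(E^{2t+1}\right)^c$ via a union bound over the relevant vertex pairs, using the fact that, conditioned on $G_e^{2t}$ and $C^{2t}$, the not-yet-revealed edges incident to $\partial G_e^{2t}$ are governed by independent $\Bern(\lambda/n)$ random variables, so the number of blue neighbors of any $u \in \partial G_e^{2t}$ lying inside the already-revealed vertex set $V(G_e^{2t})$ is stochastically dominated by $\Binom(|V(G_e^{2t})|, \lambda/n)$. First I would record, as a consequence of~\prettyref{lmm:C2t}, that on the event $C^{2t}$ we have $|V(G_e^{2t})| \le (2\lambda k + 2)^{t+1}\log n = n^{o(1)}$ and $|\partial G_e^{2t}| \le (2\lambda k+2)^{t}\log n = n^{o(1)}$; call these quantities $V_t$ and $P_t$ respectively, each $n^{o(1)}$.

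For the event $E^{2t}$: the first part fails only if some $u \in \partial G_e^{2t}$ has a blue edge to some vertex in $V(G_e^{2t})$; by a union bound over the at most $P_t \cdot V_t$ such (ordered) pairs, this has probability at most $P_t V_t \cdot \lambda/n = n^{-1+o(1)}$. The second part fails only if two distinct $u,v \in \partial G_e^{2t}$ share a common blue neighbor $w \in [n]$; for a fixed triple $(u,v,w)$ the probability that both $(u,w)$ and $(v,w)$ are blue edges is at most $(\lambda/n)^2$, and there are at most $P_t^2 \cdot n$ such triples, giving probability at most $P_t^2 n \cdot (\lambda/n)^2 = P_t^2 \lambda^2 / n = n^{-1+o(1)}$. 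One must be mildly careful that some of the edges incident to $\partial G_e^{2t}$ toward $V(G_e^{2t})$ have \emph{already} been examined during the construction of $G_e^{2t}$ (those that are present are, by definition of $\partial G_e^{2t}$ as the frontier, the ones used to reach the previous layer, and they are red, not blue, at even depth) — but the blue non-edges from the frontier into the interior are exactly the ones not yet revealed, so the conditional $\Bern(\lambda/n)$ description is valid for them. The event $E^{2t+1}$ is handled identically, now with red edges emanating from $\partial G_e^{2t+1}$; the only change is that each frontier vertex has a \emph{fixed} number ($\le k$) of incident red edges, which only makes the relevant union bounds smaller, and conditioning on $G_e^{2t}$, $C^{2t}$ controls $|V(G_e^{2t+1})|$ and $|\partial G_e^{2t+1}|$ (up to the extra factor $\le k$, still $n^{o(1)}$) as well, since $C^{2t}$ together with the deterministic bound $|\partial G_e^{2t+1}| \le k|\partial G_e^{2t}|$ suffices.

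Combining the four union bounds and noting $E^{2t}$ and $E^{2t+1}$ together are implied by the complement of the union of these four bad events, we get
$$
\prob{E^{2t} \cap E^{2t+1} \mid G_e^{2t}, C^{2t}} \ge 1 - P_t V_t \frac{\lambda}{n} - P_t^2 \frac{\lambda^2}{n} - k P_{t}^{2} \frac{\lambda}{n} - k^2 P_t^2 \frac{\lambda^2}{n} \cdot n = 1 - n^{-1 + o(1)},
$$
where I am being schematic about constants and the precise exponents of the $n^{o(1)}$ polylog factors, all of which are absorbed into the $o(1)$ in the exponent because $(2\lambda k + 2)^t \log n = n^{o(1)}$ by hypothesis. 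The main obstacle — really the only subtle point rather than a routine calculation — is the bookkeeping of \emph{which} edges incident to the frontier have already been revealed during the breadth-first construction of $G_e^{2t}$, so that one correctly identifies the remaining randomness as fresh independent $\Bern(\lambda/n)$ coin flips; getting this filtration argument right (ideally by invoking the same exploration-process/optional-stopping setup already used for~\prettyref{lmm:coupling} and~\prettyref{lmm:C2t}) is where care is needed.
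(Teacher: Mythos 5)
Your treatment of $E^{2t}$ is exactly the paper's argument (union bound over frontier--interior pairs for the first part, over frontier pairs with a common outside blue neighbor for the second, with the same caveat about which blue edges are still unrevealed). The gap is in $E^{2t+1}$, which you cannot in fact handle ``identically.'' Two problems. First, your claimed deterministic bound $|\partial G_e^{2t+1}| \le k\,|\partial G_e^{2t}|$ is backwards: the factor-$k$ bound holds for the \emph{red} step, i.e.\ $|\partial G_e^{2t+2}| \le k\,|\partial G_e^{2t+1}|$ (this is what \prettyref{lmm:C2t} uses), whereas the odd frontier $\partial G_e^{2t+1}$ is produced by the blue edges, is binomially distributed, and is not measurable given $G_e^{2t}$ nor bounded by $C^{2t}$. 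It can only be controlled with high probability, and the natural Chernoff bound (as in \prettyref{lmm:C2t}) fails with probability $n^{-\lambda/3}$, which for a small constant $\lambda$ is too weak to be absorbed into the claimed $1-n^{-1+o(1)}$; so even a repaired version of your plan needs either a larger threshold or a different structure. Second, conditioned on $H^*$ the red edges are deterministic, so a ``collision'' of two odd-frontier vertices through a common red neighbor carries no $(\lambda/n)^2$ factor from the red edges; the probability must come from the two blue edges that attach those vertices to $\partial G_e^{2t}$. Your schematic final display betrays this conflation: the last term $k^2 P_t^2 \tfrac{\lambda^2}{n}\cdot n = k^2 P_t^2\lambda^2$ is $n^{o(1)}$, not $n^{-1+o(1)}$.

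The paper's proof sidesteps both issues by never union bounding over $\partial G_e^{2t+1}$ directly: it rewrites ``$u \in \partial G_e^{2t+1}$'' as ``$u \notin V(G_e^{2t})$ and $u \in N_{u'}^B$ for some $u' \in \partial G_e^{2t}$,'' conditions on $H^*$ (so each vertex $w$ has at most $k$ red neighbors, used only as a counting bound), and then sums over $u',w' \in \partial G_e^{2t}$ and over $w$ in $V(G_e^{2t})$ or $[n]$, charging each configuration $\lambda/n$ per blue attachment edge; this yields the bounds $|\partial G_e^{2t}|\,|V(G_e^{2t})|\,k\lambda/n + |\partial G_e^{2t}|^2\, n k (\lambda/n)^2$ and $|\partial G_e^{2t}|^2\, n k^2 (\lambda/n)^2$, all $n^{-1+o(1)}$, before averaging over $H^*$. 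To fix your write-up you would need to restructure the $E^{2t+1}$ step along these lines rather than treating it as a copy of the $E^{2t}$ step.
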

\begin{proof}
We first show 
$\prob{(E^{2t})^c \mid G_e^{2t}, C^{2t} } \le n^{-1+o(1)}.$ 
By the definition of $E^{2t}$ in~\prettyref{eq:E_2t_def}, we have 
\begin{align*}
(E^{2t})^c &=
\left\{ \exists u  \in \partial G_e^{2t}, w \in V(G_e^{2t}): w \in N_u^B  \right\} \\
&\quad \cup 
\left\{ \exists u \neq v \in 
\partial G_e^{2t},  w \notin V(G_e^{2t}):  w \in N_u^B, w \in N_v^B 
 \right\}.
\end{align*}
Observe that 
\begin{align*}
& \prob{\exists u  \in \partial G_e^{2t},  w \in V(G_e^{2t}): w \in N_u^B   \mid   G_e^{2t}, C^{2t}} \\
& \le \sum_{ u \in \partial G_e^{2t}}
\sum_{w \in V(G_e^{2t})} 
\prob{ w \in N_u^B  \mid   G_e^{2t},  C^{2t}}
\le \left| \partial G_e^{2t} \right|\times
\left| V(G_e^{2t}) \right|\times \frac{\lambda}{n} \\
& \le (2\lambda k+2)^t \log n \times (2\lambda k+2)^{t+1} \log n \times \frac{\lambda}{n} = n^{-1+o(1)},
\end{align*}
where the first inequality follows from the union bound,
and the second inequality holds for the following reasons. If $w \in \partial G_e^{2s}$ for $0 \le s \le t-1$, then $w \notin N_u^B$, because otherwise, the shortest alternating path from $u$ to $e$ would have length at most $2s+1$, violating the fact that $u \in \partial G_e^{2t}$; If $w \in V(G_e^{2t})\setminus \cup_{s=0}^{t-1} \partial G_e^{2s}$, then 
$\prob{ w \in N_u^B  \mid    C^{2t}} \le \lambda/n$. 
In addition,
\begin{align*}
& \prob{\exists u \neq v \in 
\partial G_e^{2t}, w \notin V(G_e^{2t}):  w \in N_u^B, w \in N_v^B \mid G_e^{2t}, C^{2t}} \\
& \le \sum_{u \neq v \in 
\partial G_e^{2t}}
\sum_{w \notin V(G_e^{2t})}
\prob{w \in N_u^B, w \in N_v^B \mid G_e^{2t}, C^{2t}}\\
& \le \left|\partial G_e^{2t} \right|^2 \times n (\lambda/n)^2\\
& \le 
(2\lambda k+2)^{2t} \log^2(n) 
 \times \lambda^2/n = n^{-1+o(1)}.
\end{align*}
Combining the last three displayed equations with a union bound yields  
\[\prob{(E^{2t})^c \mid G_e^{2t}, C^{2t} } \le n^{-1+o(1)}.\]

It remains to show 
$\prob{(E^{2t+1})^c \mid G_e^{2t}, C^{2t} } \le n^{-1+o(1)}.$ By the definition of $E^{2t+1}$ in~\prettyref{eq:E_2t_1_def}, we have
\begin{align}
(E^{2t+1})^c = & 
\left\{ \exists u  \in \partial G_e^{2t+1},  w \in V(G_e^{2t+1}): w \in N_u^R  \right\} \nonumber \\
& \cup 
\left\{ \exists u \neq v \in 
\partial G_e^{2t+1},  w \notin V(G_e^{2t+1}):  w \in N_u^R, w \in N_v^R 
 \right\}. \label{eq:E-complement}
\end{align}
Observe that the first event in \eqref{eq:E-complement} satisfies
\begin{align*}
&\left\{ \exists u  \in \partial G_e^{2t+1},  w \in V(G_e^{2t+1}): w \in N_u^R  \right\} \\
&=\left\{ \exists u' \in \partial G_e^{2t}, u  \notin V(G_e^{2t}), w \in V(G_e^{2t+1}): w \in N_u^R,
u \in N_{u'}^B \right\} \\
&=
\left\{ \exists  u' \in \partial G_e^{2t}, u  \notin V(G_e^{2t}), w \in V(G_e^{2t}):
w \in N_u^R,
u \in N_{u'}^B \right\} \\
&\quad \cup \left\{ \exists  u' \in \partial G_e^{2t}, u  \notin V(G_e^{2t}), w \in \partial G_e^{2t+1}:
w \in N_u^R, u \in N_{u'}^B \right\}\\
& = 
\left\{ \exists  u' \in \partial G_e^{2t}, u  \notin V(G_e^{2t}), w \in V(G_e^{2t}):
w \in N_u^R,
u \in N_{u'}^B \right\}\\
& \quad \cup \left\{ \exists  u' \in \partial G_e^{2t}, w' \in \partial G_e^{2t}, u  \notin V(G_e^{2t}), 
w \notin V(G_e^{2t}): w \in N_u^R,
u \in N_{u'}^B, w \in N_{w'}^B \right\},
\end{align*}
where the first equality holds because $u \in \partial G_e^{2t+1}$ if and only if $u \notin V(G_e^{2t})$ is connected to some $u' \in \partial G_e^{2t}$ via a blue edge;
the second equality holds when we decompose $V(G_e^{2t+1})$ into $V(G_e^{2t})$ and $\partial G_e^{2t+1}$; the last equality holds because $w \in \partial G_e^{2t+1}$ if and only if $w \notin V(G_e^{2t})$ is connected to some $w' \in \partial G_e^{2t}$ via a blue edge. It follows from a union bound that  
\begin{align*}
& \prob{\exists u  \in \partial G_e^{2t+1},  w \in V(G_e^{2t+1}): w \in N_u^R  \mid H^*, G_e^{2t}, C^{2t}} \\
& \le \prob{ \exists u' \in \partial G_e^{2t}, u  \notin V(G_e^{2t}), w \in V(G_e^{2t}): u \in N_{u'}^B, w \in N_u^R \mid H^*, G_e^{2t}, C^{2t}} \\
& \quad + \prob{ \exists u', w' \in \partial G_e^{2t}, u  \notin V(G_e^{2t}), w \notin V(G_e^{2t}): u \in N_{u'}^B, w \in N_{w'}^B, w \in N_u^R \mid H^*, G_e^{2t}, C^{2t} }  
\\
& \le \sum_{ u' \in \partial G_e^{2t}}
\sum_{w \in V(G_e^{2t})} 
\sum_{u \in (V(G_e^{2t}))^c \cap N_w^R} 
\prob{ u \in N_{u'}^B \mid H^*, G_e^{2t}, C^{2t} } \\
& \quad + \sum_{ u', w' \in \partial G_e^{2t}}
\sum_{w \notin V(G_e^{2t})} 
\sum_{u \in (V(G_e^{2t}))^c 
\cap N_w^R } 
\prob{ u \in N_{u'}^B, w \in N_{w'}^B \mid H^*, G_e^{2t}, C^{2t} } 
\\
& \le \left| \partial G_e^{2t} \right| 
\left| V(G_e^{2t}) \right| \frac{k\lambda}{n} + \left| \partial G_e^{2t} \right|^2  n k \left( \frac{\lambda}{n}\right)^2  \\
& \le (2\lambda k+2)^{2t+1} \log^2(n)   \frac{k\lambda}{n} + (2\lambda k+2)^{2t} \log^2(n) k \lambda^2 /n = n^{-1+o(1)}.
\end{align*}
Similarly, the second event in \eqref{eq:E-complement} satisfies
\begin{align*}
& \left\{ \exists u \neq v \in 
\partial G_e^{2t+1},  w \notin V(G_e^{2t+1}):  w \in N_u^R, w \in N_v^R 
 \right\}\\
&=\left\{ 
\exists  u', v' \in \partial G_e^{2t},
u, v \notin V(G_e^{2t}),
w \notin V(G_e^{2t+1}):
w \in N_u^R, w \in N_v^R,
u \in N_{u'}^B, v \in N_{v'}^B
\right\}.
\end{align*}
It follows that
\begin{align*}
& \prob{\exists u \neq v \in 
\partial G_e^{2t+1},  w \notin V(G_e^{2t+1}):  w \in N_u^R, w \in N_v^R \mid H^*, G_e^{2t}, C^{2t} } \\
& \le \sum_{u',v' \in \partial G_e^{2t}}
\sum_{w \in [n]} \sum_{u,v \in (V(G_e^{2t}))^c \cap N_w^R} 
\prob{u \in N_{u'}^B, v \in N_{v'}^B \mid   H^*, G_e^{2t},  C^{2t}} \\
& \le \left| \partial G_e^{2t}\right|^2 n k^2 (\lambda/n)^2\\
& \le (2\lambda k+2)^{2t} \log^2(n) \times  n k^2 (\lambda/n)^2 = n^{-1+o(1)}.
\end{align*}
Hence, recalling \eqref{eq:E-complement}, we deduce that
\begin{align*}
&\prob{(E^{2t+1})^c \mid H^*, G_e^{2t}, C^{2t}} \\
& \le \prob{\exists u  \in \partial G_e^{2t+1},  w \in V(G_e^{2t+1}): w \in N_u^R  \mid H^*, G_e^{2t}, C^{2t}} \\
& \quad + \prob{\exists u \neq v \in 
\partial G_e^{2t+1},  w \notin V(G_e^{2t+1}):  w \in N_u^R, w \in N_v^R \mid H^*, G_e^{2t}, C^{2t} } \\
& \le n^{-1+o(1)}.
\end{align*}
Further taking an average over $H^*$, we get that
$\prob{(E^{2t+1})^c \mid G_e^{2t}, C^{2t}}
\le n^{-1+o(1)}$.
\end{proof}

We are ready to construct the coupling and prove~\prettyref{lmm:coupling}. 

\begin{proof}[Proof of~\prettyref{lmm:coupling}]
We need the following bound on the total variation distance between
the binomial distribution and a Poisson distribution with approximately the same
mean:
\begin{align}
\mathrm{TV}\left( \Binom(m,p), \Pois(\lambda) \right) \le mp^2 + \psi(\lambda-mp), \label{eq:binom_poisson_coupling}
\end{align}
where $\psi(x)=1 - e^{-|x|} \le |x|$. The bound follows from $\mathrm{TV}\left( \Binom(m,p), \Pois(mp) \right) \le mp^2$ (see, e.g.~\cite[eq.(55)]{hajek2018recovering}), and the fact that $\mathrm{TV}\left( \Pois(\lambda),  \Pois(\lambda')\right) \le 1 - e^{-(\lambda - \lambda')}$ as $\text{Pois}(\lambda)$ has the same distribution as $\text{Pois}(\lambda') + \text{Pois}(\lambda - \lambda')$ for $\lambda > \lambda'$.

We construct the coupling recursively. For the base case with $t=0$, clearly $\prob{G_e^0=T_e^0}=1.$ 

Condition on $T_e^{2t}=G_e^{2t}$ (with an appropriate vertex mapping)
and event $C^{2t}$. 
We aim to construct a coupling so that 
$T_e^{2t+1}=G_e^{2t+1}$ and $T_e^{2t+2}=G_e^{2t+2}$ with probability at least 
$1-n^{-\Omega(1)}.$

Each vertex $u$ in $\partial G_e^{2t}$
has $B_u$ number of incident blue edges connecting to vertices in $[n]\backslash V(G_e^{2t})$, where the $B_u$'s are i.i.d.\ $\Binom(n-|V(G_e^{2t})| ,\lambda/n)$.
Similarly, each vertex $u$ in $\partial T_e^{2t}$ has $\tilde{B}_u$ number of incident blue edges, where the $\tilde{B}_u$'s are i.i.d.\ $\Pois(\lambda).$
Thus, we can couple $B_u$'s to $\tilde{B}_u's$ using~\prettyref{eq:binom_poisson_coupling} and  take a union bound over $u \in \partial G_e^{2t}\equiv \partial T_e^{2t}$.
In particular,
\begin{align*}
& \prob{ B_u = \tilde{B}_u, \forall u \in \partial G_e^{2t} 
\mid G_e^{2t}= T_e^{2t}, C^{2t}} \\
& \ge 1- \left|\partial G_e^{2t}\right|
\left( \lambda^2/n + \psi \left( \lambda -(n-|V(G_e^{2t})|)  \lambda/n \right) \right) \\
& \ge
1- (2k\lambda+2)^t \log n
\left( \lambda^2/n +   (2k\lambda+1)^{t+1} \lambda/n \right) \\
& \ge 1- n^{-1+o(1)},
\end{align*}
where the second inequality holds because conditional on $C^{2t}$, $|\partial G_e^{2t}| \le (2k\lambda+2)^t \log n$ and $|V(G_e^{2t})| \le (2k\lambda+1)^{t+1} \log n$. 
Thus, we have constructed a coupling such that $B_u=\tilde{B}_u$ for all $u \in \partial G_e^{2t}$ with probability at least $1-n^{-1+o(1)}$.

Recall that if event $E^{2t}$ occurs, the set of blue edges added to $G_e^{2t+1}$ connect to distinct vertices in $[n]\setminus V(G_e^{2t})$. Thus, on event $E^{2t} \cap \{B_u = \tilde{B}_u, \forall u \in \partial G_e^{2t}\}$, there exists a one-to-one mapping from the vertices in $\partial G_e^{2t+1}$ to vertices in $\partial T_e^{2t+1}$ such that $G_e^{2t+1} = T_e^{2t+1}. $ 
Further, recall that on event $E^{2t+1}$, each vertex $u$ in $\partial G_e^{2t+1}$ has exactly $k$ incident red edges, and these red edges connect to distinct vertices in $[n]\setminus V(G_e^{2t+1})$.
Thus, on the event $E^{2t+1} \cap E^{2t} \cap \{B_u = \tilde{B}_u, \forall u \in \partial G_e^{2t}\}$, there exists  a one-to-one mapping from the vertices in $\partial G_e^{2t+2}$ to the vertices in $\partial T_e^{2t+2}$, so that $G_e^{2t+2} = T_e^{2t+2}. $ 
In conclusion, we get that
\begin{align*}
&\prob{ G_e^{2t+2}= T^{2t+2}  \mid G_e^{2t}= T^{2t}, C^{2t}} \\
&\ge 
\prob{ E^{2t+1} \cap E^{2t} \cap \{B_u = \tilde{B}_u, \forall u \in \partial G_e^{2t}\} \mid G_e^{2t}= T^{2t}, C^{2t}}\\
& \ge 
\prob{ B_u = \tilde{B}_u, \forall u \in \partial G_e^{2t} \mid G_e^{2t}= T^{2t}, C^{2t}}
- \prob{\left(E^{2t+1} \cap E^{2t}\right)^c \mid G_e^{2t}= T^{2t}, C^{2t}} \\
& \ge 1- n^{-1+o(1)},
\end{align*} 
where the last inequality holds by~\prettyref{lmm:EgivenC}, since we are assuming $(2k\lambda +2)^t \log n = n^{o(1)}$.
Moreover,
\begin{align*}
&\prob{ G_e^{2t+2}= T^{2t+2}, C^{2t+2} \mid G_e^{2t}= T^{2t}, C^{2t}} \\
& \ge \prob{ G_e^{2t+2}= T^{2t+2}  \mid G_e^{2t}= T^{2t}, C^{2t}} -(1- 
\prob{C^{2t+2} \mid
G_e^{2t}= T^{2t}, C^{2t}})\\
&\ge 1- n^{-\Omega(1)},
\end{align*}
where the last inequality holds by combining the last displayed equation with~\prettyref{lmm:C2t}.
It follows that for all $t$ satisfying $(2k\lambda+2)^t \log n=n^{o(1)}$, 
\begin{align*}
\mathbb{P}(G_e^{2t} = T_e^{2t}) &= \mathbb{P}\left(\bigcap_{s=0}^{t} G_e^{2s} = T_e^{2s} \right) \\
&\geq \mathbb{P}\left(\bigcap_{s=0}^{t} \{G_e^{2s} = T_e^{2s}, C^{2s}\} \right) \\
&= \mathbb{P}(G_e^{0} = T_e^{0})\prod_{s=1}^{t} \mathbb{P}\left(G_e^{2s} = T_e^{2s}, C^{2s} \mid \bigcap_{s'= 0}^{s-1} \{G_e^{2s'} = T_e^{2s'}, C^{2s'}\} \right)\\
&= \mathbb{P}(G_e^{0} = T_e^{0})\prod_{s=1}^{t} \mathbb{P}\left(G_e^{2s} = T_e^{2s}, C^{2s} \mid  G_e^{2(s-1)} = T_e^{2(s-1)}, C^{2(s-1)} \right)\\
&\geq 1 - t n^{-\Omega(1)} \geq 1-n^{-\Omega(1)}.
\end{align*}
Thus, we get that
$\mathbb{P}(G_e^{2t} = T_e^{2t}) \ge 1-n^{-\Omega(1)}$ for all $t$ satisfying $(2k\lambda+2)^t \log n=n^{o(1)}$. 
\end{proof}

Next, we need a key intermediate result, showing that when $G_e^{2t}=T_e^{2t}$, if either side of $T_e^{2t}$ dies within depth $2t$, then 
the root edge $e$ would be pruned by the iterative pruning algorithm and thus $e \notin C_n.$
\begin{lemma}\label{lmm:pruning}
Suppose that $G_e^{2t}=T_e^{2t}$ and either side of $T_e^{2t}$ dies out within depth $(2s)$ for 
$1 \le s \le t$. Then
$e \notin C_n.$
\end{lemma}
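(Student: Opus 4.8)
The plan is to exhibit one concrete valid execution of the iterative pruning algorithm that deletes the edge $e$. Because an edge, once pruned, is never reinstated, and because the two triggers ``$\deg(v)=\kappa_v$'' (all unplanted/blue edges at $v$ already removed, so the remaining red edges are planted) and ``$\kappa_v=0$'' (all planted/red edges at $v$ already removed, so the remaining blue edges are unplanted) are monotone in the current set of removed edges, the set of edges the algorithm removes is the least fixed point of a monotone operator and therefore does not depend on the order of operations; hence producing any one run that deletes $e$ proves $e\notin C_n$.

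For the setup I would, by symmetry, assume it is the left side of $T_e^{2t}$, rooted at the endpoint $u_0$ of $e$, that dies out within depth $2s$. Write $L$ for this left subtree, with layers $V_0=\{u_0\},V_1,\dots,V_D$ where $D\le 2s-2\le 2t-2$ and $D$ is even (in the alternating tree every vertex on an odd layer is joined to its parent by a blue edge and to exactly $k$ children on the next layer by red edges). Then I would record two structural facts that hold on the event $G_e^{2t}=T_e^{2t}$ (understood, as in the coupling construction of \prettyref{lmm:coupling}, to carry along the no-collision events used there): \textbf{(A)} every odd-layer (``blue'') vertex $w\in V_i$ has exactly $k$ red $G$-edges, and all of them join $w$ to its $k$ children in $V_{i+1}$ (immediate from the $k$-regularity of $H^*$ and the fact that the edge from $w$ to its parent is blue); and \textbf{(B)} every even-layer (``red'') vertex $v\in V_i$, including $u_0$, has exactly $k$ red $G$-edges, and its blue $G$-edges are exactly the edges joining $v$ to its children in $V_{i+1}$ --- in particular a red leaf (a vertex of $V_D$) has no blue $G$-edge at all, so $\deg_G$ of a red leaf equals $k$. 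The nontrivial half of \textbf{(B)} is the ``no hidden blue edge'' statement: any blue edge incident to $v$, appended to the unique (tree) alternating path from $e$ to $v$ --- of length $\le D\le 2t-2$ --- yields an alternating path of length $\le 2t$, so it lies in $G_e^{2t}=T_e^{2t}$, and using acyclicity of $T_e^{2t}$ together with the collision event at the layer of $v$ one checks its far endpoint can only be a child of $v$.

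With \textbf{(A)} and \textbf{(B)} available, I would peel $L$ from its leaves inward, processing layers in the order $i=D,D-1,\dots,1,0$ and keeping the invariant: once layer $i$ is processed, every vertex of $V_D\cup\dots\cup V_i$, together with all edges incident to it --- in particular all edges between $V_{i-1}$ and $V_i$ --- has been removed. Base case $i=D$: by \textbf{(B)} each $v\in V_D$ has $\deg_G(v)=k=\kappa_v$, so the degree-equals-capacity rule removes $v$ and its $k$ red edges, decrementing the capacity of the parent of $v$ in $V_{D-1}$. For odd $i$: by \textbf{(A)} all $k$ red edges of each $w\in V_i$ lead into $V_{i+1}$ and were removed in the previous round, so $\kappa_w=0$ and the zero-capacity rule removes $w$ together with its remaining (blue) edges, including the edge up to $V_{i-1}$. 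For even $i$ with $2\le i\le D-2$: by \textbf{(B)} all blue edges of each $v\in V_i$ lead into $V_{i+1}$ and were removed in the previous round, and since $\deg(v)-\kappa_v$ always equals the number of not-yet-removed blue edges at $v$ (as $v$ has exactly $k$ red edges), $\deg(v)=\kappa_v$ holds and the degree-equals-capacity rule removes $v$ together with its remaining red edges, including the edge up to $V_{i-1}$. Finally, once $V_1$ is processed, all blue edges at $u_0$ --- which by \textbf{(B)} go to $V_1$ --- are gone, so $\deg(u_0)=\kappa_{u_0}$ and the degree-equals-capacity rule removes $u_0$ together with its remaining red edges; one of these is $e$ itself, since the right endpoint $u_0'$ and the right subtree are never touched by this run. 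Hence $e$ is deleted, i.e.\ $e\notin C_n$. (The degenerate case $D=0$, equivalently $s=1$, is immediate: $L=\{u_0\}$, and by \textbf{(B)} $u_0$ already has $\deg_G(u_0)=k=\kappa_{u_0}$, so $e$ is deleted in the very first round.)

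I expect the main obstacle to be fact \textbf{(B)} --- precisely the step of ruling out ``hidden'' blue $G$-edges at the red tree-vertices, i.e.\ blue edges of $G$ incident to tree-vertices that fail to appear in $G_e^{2t}$ only because they lie on no alternating path from $e$. This is where the hypothesis $G_e^{2t}=T_e^{2t}$ has to be combined with the absence-of-collision structure built into the coupling of \prettyref{lmm:coupling}; everything after that is routine bookkeeping of the peeling invariant.
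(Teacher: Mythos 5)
Your peeling argument is the same argument as the paper's: the paper also starts at the deepest even layer (degree equals capacity there), then alternates the capacity-zero rule at odd layers with the degree-equals-capacity rule at even layers, cascading up until $u_0$ loses all blue edges and $e$ is removed. Your explicit order-independence preamble and the invariant bookkeeping are extra care the paper leaves implicit, and they are correct (your reformulation of the two triggers as ``all blue edges at $v$ removed'' and ``all red edges at $v$ removed'' is exactly right). Fact \textbf{(A)} also goes through: a red edge from an odd-layer vertex back to an even-layer ancestor would leave that vertex with fewer than $k$ children in $G_e^{2t}$, contradicting $G_e^{2t}=T_e^{2t}$.

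The genuine gap is the one you flagged yourself: fact \textbf{(B)} is not established, and your one-line justification fails in exactly the problematic case. Appending a blue edge at an even-layer vertex $v$ to the tree alternating path from $e$ to $v$ only produces an alternating \emph{path} when the far endpoint is not already on that path; a blue edge from $v$ back to an odd-depth strict ancestor $w$ (possible once $v$ is at depth $\ge 4$) lies on no alternating path from $e$ at all --- the path to $v$ contains $w$, and the path to $w$ ends in a blue edge --- so it is simply invisible to $G_e^{2t}$ and is fully compatible with $G_e^{2t}=T_e^{2t}$. With such an edge at the deepest red layer the cascade never starts ($\deg(v)=k+1>\kappa_v$), so \textbf{(B)} cannot be deduced from the lemma's stated hypothesis alone, and no amount of ``acyclicity of $T_e^{2t}$'' will rule it out. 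The correct fix is the one you guess: one must work on the coupling's good event, which includes the events $E^{2s}$ of \prettyref{eq:E_2t_def} forbidding blue edges from $\partial G_e^{2s}$ into $V(G_e^{2s})$ --- precisely the hidden back-edges --- and these hold with probability $1-n^{-1+o(1)}$, so the downstream use of the lemma is unaffected. For what it is worth, the paper's own proof simply asserts ``there is no incident blue (unplanted) edge'' at the deepest layer without addressing this, so your writeup is no less rigorous than the paper's; but as a standalone proof of the lemma as stated, the unproven \textbf{(B)} is a real gap rather than routine bookkeeping.
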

\begin{proof}
First, let $S$ be the side of $T_e^{2t}$ that dies within depth $2s$. Since $G_e^{2t}=T_e^{2t}$ and $S$ dies out in $2s$ steps, for any vertex $u \in \partial G_e^{2s}\cap S$, there is no incident blue (unplanted) edge. Thus, all edges incident to $u$ must be planted. Hence, the iterative pruning algorithm removes vertex $u$ and all its incident edges from the graph, and decreases the capacity of the endpoints of the removed edges. Thus, for any vertex $v \in \partial G_e^{2s-1}\cap S$, all of its $k$ incident red edges will be removed and thus its capacity will drop to $0$. Therefore, the iterative pruning algorithm continues to remove vertex $v$ together with all its incident edges. Iteratively applying the above argument shows that
the iterative algorithm removes all vertices and edges in $G_e^{2t}\cap S$ at which point the vertex of $e$ in $S$ will not have any unplanted edges left. Then the algorithm will remove $e$ and hence $e \notin C_n.$
\end{proof}

Let $\rho_t$ denote the probability that the left side of the alternating branching process dies out by depth $2t$. Then we have the following recursion from the standard branching process results (cf.~\prettyref{lmm:braching_standarnd}). 
\begin{lemma}[Extinction probability]\label{lmm:extinct_prob}
Let $\phi(x)=\exp\left(-\lambda(1-x^k)\right)$ for $x \in [0,1].$
Then $\rho_0=0,$ and
$$
\rho_t = \phi\left(\rho_{t-1}\right). 
$$
If $k\lambda \equiv c\le 1$, then $\lim_{t\to \infty}\rho_t = 1$;
If $c>1,$ then $\lim_{t\to \infty}\rho_t = \rho$, where $\rho$ is defined in \eqref{eq:rho}. 
\end{lemma}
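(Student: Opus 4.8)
The plan is to recognize the left side of the alternating branching process, viewed two layers at a time, as an ordinary Galton--Watson process and then invoke classical extinction theory. Collapse each consecutive blue-then-red pair of layers into a single generation: a vertex at an even layer has $N \sim \Pois(\lambda)$ blue children, and each blue child has exactly $k$ red children, so the number of ``grandchildren'' at the next even layer is $kN$, and these counts are independent across even-layer vertices. Hence the even-layer profile of the left side is exactly a Galton--Watson process with offspring distribution $kN$, $N \sim \Pois(\lambda)$, whose probability generating function is $\mathbb{E}[x^{kN}] = \exp(\lambda(x^k-1)) = \phi(x)$ and whose mean is $\phi'(1) = k\lambda = c$. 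Moreover, since a red layer is never empty unless the blue layer directly above it is already empty, the left side ``dies out by depth $2t$'' exactly when this collapsed process has an empty generation $t$; thus $\rho_t$ is precisely the usual ``extinct by generation $t$'' probability of a Galton--Watson process with offspring generating function $\phi$.

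Given this identification, the recursion $\rho_0 = 0$ and $\rho_t = \phi(\rho_{t-1})$ is the standard one (this is exactly \prettyref{lmm:braching_standarnd}): generation $0$ is nonempty by definition so $\rho_0 = 0$, and conditioning on the offspring of the root at the first collapsed step yields $\rho_t = \phi(\rho_{t-1})$. Since $\phi$ is continuous and strictly increasing on $[0,1]$ with $\phi(0) = e^{-\lambda} > 0 = \rho_0$, the sequence $(\rho_t)$ is nondecreasing and bounded above by $1$, hence converges to some $\rho_\infty \in (0,1]$; by continuity $\rho_\infty$ is a fixed point of $\phi$, and since $\rho_t \le \rho^*$ for every fixed point $\rho^*$ (by induction on $t$, using that $\phi$ is increasing), $\rho_\infty$ is in fact the smallest fixed point of $\phi$ in $[0,1]$.

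It remains to identify $\rho_\infty$, and here a short convexity argument suffices. Since $\phi''(x) = \phi(x)(\lambda^2 k^2 x^{2k-2} + \lambda k(k-1) x^{k-2}) > 0$ on $(0,1]$, $\phi$ is strictly convex there, so the graph of $\phi$ meets the line $y = x$ in at most two points, one of which is $x = 1$. If $c = k\lambda \le 1$, the tangent to $\phi$ at $1$ has slope $c \le 1$, so by convexity $\phi(x) \ge 1 + c(x-1) \ge x$ for all $x \in [0,1]$; a second fixed point $x_0 < 1$ would force $\phi(x) < x$ strictly on $(x_0,1)$ (a strictly convex function lies strictly below its chords, and the chord through $(x_0,x_0)$ and $(1,1)$ is the line $y=x$), contradicting $\phi \ge \mathrm{id}$ on $[0,1]$. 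Hence $1$ is the only fixed point in $[0,1]$ and $\rho_\infty = 1$. If $c > 1$, then $\phi(x) - x$ has derivative $c - 1 > 0$ at $x = 1$, so $\phi(x) < x$ for $x$ just below $1$, while $\phi(0) = e^{-\lambda} > 0$; by the intermediate value theorem $\phi$ has a fixed point in $(0,1)$, which is unique there by strict convexity and is the smallest fixed point of $\phi$ in $[0,1]$, so $\rho_\infty$ equals it. Being a fixed point it satisfies $\rho = \phi(\rho) = \exp(-\lambda(1-\rho^k))$, which is \eqref{eq:rho}.

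The only delicate point --- more a matter of careful bookkeeping than a genuine obstacle --- is justifying the two-layers-into-one-generation collapse and the precise correspondence between ``the left side dies out by depth $2t$'' and extinction of the collapsed process by generation $t$; once that correspondence is pinned down, everything reduces to textbook Galton--Watson facts together with the elementary convexity of $\phi$.
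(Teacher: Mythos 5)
Your proposal is correct and follows essentially the same route as the paper: the paper proves this lemma simply by viewing the two-layer (blue-then-red) step as a Galton--Watson generation with generating function $\phi$ and citing standard branching-process results (its Appendix lemma, i.e.\ Durrett's Theorem 2.1.4), which is exactly the collapse-and-classical-extinction argument you carry out. The only difference is that you make the citation self-contained, spelling out the fixed-point/convexity analysis (including the $c\le 1$ case, which the paper leaves to standard theory), and your details are accurate.
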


Now, we are ready to prove 
$\prob{e \in C_n \mid H^*} \le (1-\rho)^2+o(1).$  Note that 
 \begin{align*}
 \prob{e \in C_n \mid H^*} & = \prob{G_e^{2t} = T_e^{2t}, e \in C_n} + \prob{G_e^{2t} \neq T_e^{2t},e\in C_n} \\
 & \le  \prob{ \text{both sides of $T_e^{2t}$ survive to $2t$ depth} }
 +n^{-1+o(1)} \\
 & \le (1-\rho_t)^2 + n^{-1+o(1)},
 \end{align*}
where the first inequality holds by~\prettyref{lmm:pruning}
and~\prettyref{lmm:coupling}.
For any arbitrarily small constant $\epsilon>0$, we can choose $t\equiv t(\epsilon)$ large enough so that $|\rho_t -\rho| \le \epsilon/2$ by~\prettyref{lmm:extinct_prob}
and  hence
$
\limsup_{n\to \infty} \prob{e \in C_n} \le (1-\rho)^2 + \epsilon .
$
Since $\epsilon$ is an arbitrarily small  constant, we have
$\limsup_{n\to \infty} \prob{e \in C_n}  \le  (1-\rho)^2.$

\subsection{Proof of Error Lower Bound}
In this subsection, we prove  $\prob{e \in C_n} \ge (1-\rho)^2-o(1)$.   Note that this is trivially true when $k\lambda\le 1$ as $\rho=1$. Thus it suffices to focus on $k\lambda>1$. 
\begin{lemma}\label{lmm:alternating}
 A planted edge $e$ is in the core $C_n$ if it belongs to an alternating circuit in the graph $G$.
 \end{lemma}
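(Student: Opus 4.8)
The plan is to track the iterative pruning algorithm and show that, for a fixed alternating circuit $C$ through $e$, \emph{no edge of $C$ is ever deleted}; since $e \in C$, this immediately yields $e \in C_n$.

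First I would record the key bookkeeping invariant: at every stage of the algorithm, for every vertex $i$ still present in the current graph $G_t$, the capacity $\kappa_i^{(t)}$ equals the number of planted (red) edges incident to $i$ that remain in $G_t$. This holds at initialization because $\kappa_i = k$ and $H^*$ is a $k$-factor, and it is preserved by both pruning moves: when a vertex with $\deg = \kappa$ is removed, the invariant forces all of its remaining incident edges to be planted, so decrementing each neighbor's capacity by one per removed edge is exactly right; when a vertex with $\kappa = 0$ is removed, the invariant forces all of its remaining incident edges to be unplanted, so leaving the neighbors' capacities unchanged is correct. The consequence I need is: a still-present vertex $i$ can be deleted only if it currently has \emph{no remaining unplanted edge} (the $\deg = \kappa$ case) or \emph{no remaining planted edge} (the $\kappa = 0$ case).

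Next I would argue by contradiction. Suppose some edge of $C$ is eventually removed, and let $t$ be the first step at which this happens; since edges vanish only together with an incident vertex, some vertex $v \in V(C)$ is removed at step $t$, and by minimality of $t$ every edge of $C$ is still present in the graph just before step $t$. Because $C$ is an alternating circuit, the two edges of $C$ incident to $v$ at any occurrence of $v$ along the circuit have opposite colors, so $v$ currently has at least one remaining red (planted) edge of $C$ and at least one remaining blue (unplanted) edge of $C$. Hence $v$ satisfies neither deletion condition, a contradiction. Therefore no edge of $C$ is ever removed; in particular $e$ survives the entire process, so $e \in C_n$.

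The one point requiring care — and the step I would treat as the crux — is making the argument robust to alternating circuits that revisit a vertex or reuse an edge: I only need that at each \emph{occurrence} of $v$ along the circuit the two incident circuit-edges have opposite colors (immediate from the definition of an alternating circuit), and that these are genuine edges of $G$ still present before step $t$ (immediate from minimality of $t$, even for a repeated edge). Everything else is routine: the invariant is a one-line induction over the pruning steps, and the contradiction is a two-case check against the two deletion rules.
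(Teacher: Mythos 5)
Your proof is correct, and it takes a genuinely different route from the paper's. The paper argues by symmetry: flipping the colors along the circuit turns $H^*$ into another valid $k$-factor $H^*\oplus C$ contained in the same observed graph $G$; since the pruning procedure is determined by $G$ alone and never misclassifies an edge with respect to the planted structure, it can never classify (hence never remove) a circuit edge, whose planted/unplanted status differs between the two consistent $k$-factors. You instead argue directly on the algorithm's execution: the capacity invariant (capacity equals the number of surviving planted edges at each surviving vertex) shows that a vertex can only be deleted when its surviving incident edges are monochromatic, while at the first step a circuit edge would disappear, the deleted endpoint still has both a planted and an unplanted circuit edge present, a contradiction. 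Your approach is more self-contained — it effectively supplies a proof of the "the pruning procedure never makes mistakes" claim that the paper asserts without proof — at the cost of being longer; the paper's flipping argument is slicker but leans on that unproved soundness claim and on determinism of the procedure given $G$. Both treatments handle circuits with repeated vertices or edges, and your explicit remark that the two circuit edges at each occurrence of a vertex are distinct edges of opposite colors closes the only point where repetition could have caused trouble.
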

\begin{remark}
We remark that the reverse direction of the above lemma is not true. A planted $e$ may remain in the core even if it does not belong to any  circuit. 
\end{remark}
\begin{proof}
Consider an alternating circuit containing the planted edge $e$. If we flip the colors of the edges in the circuit (planted to unplanted and vice versa), then after flipping, the planted edges still form a valid $k$-factor.  Moreover, the output of the iterative pruning procedure is unchanged. Note that the iterative pruning procedure never makes mistakes in classifying planted and unplanted edges. Thus, it will never remove any edge on this circuit. Hence $e$ must remain in the core. 
\end{proof}

Next, we lower-bound the probability that a planted $e$ belongs to an alternating circuit. 
\begin{lemma}\label{lmm:error_lower_bound_pruning}
For any planted edge $e,$
    $$
    \liminf_{n\to \infty} \prob{\text{$e$ belongs to an alternating cycle} }  \ge (1-\rho)^2.
    $$
\end{lemma}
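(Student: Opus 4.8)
The plan is to construct, with probability at least $(1-\rho)^2-o(1)$, an alternating cycle through the planted edge $e=(u_0,u_0')$; combined with Lemma~\ref{lmm:alternating} this also yields the matching lower bound on $\prob{e\in C_n}$. The construction proceeds in two stages: first grow a two-sided alternating tree rooted at $e$ (one subtree from each endpoint of $e$), then close it into a cycle by ``sprinkling'' — linking the two frontiers through a single reserved red edge via a blue--red--blue path. \textbf{Stage 1 (harvesting the factor $(1-\rho)^2$).} Explore the alternating neighborhood $G_e^{t}$ of Definition~\ref{def:neighorhood}, with the intention of stopping each side the first even time its frontier contains $\sqrt{n\log n}$ red vertices (this scale is $o(n)$, so the exploration stays sparse, yet large enough for the sprinkling below to work). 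Choosing $t=t(n)\to\infty$ slowly enough that $(2k\lambda+2)^{t(n)}\log n=n^{o(1)}$, Lemma~\ref{lmm:coupling} couples $G_e^{2t(n)}$ to the alternating branching process $T_e^{2t(n)}$, whose two subtrees (rooted at $u_0$ and $u_0'$) are independent and, since $k\lambda>1$, supercritical; by Lemma~\ref{lmm:extinct_prob} each subtree survives to depth $2t(n)$ with probability $1-\rho_{t(n)}\to 1-\rho$, so with probability $(1-\rho_{t(n)})^2-o(1)\to(1-\rho)^2$ both survive, and on survival the surviving subtree already carries $\ell_0\to\infty$ red leaves (of order $(k\lambda)^{t(n)}$ up to a positive random factor), with conditional probability $1-o(1)$.

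\textbf{Stage 2 (growing to $\sqrt{n\log n}$ leaves).} Conditioned on both subtrees surviving to depth $2t(n)$ with $\ell_0$ red leaves each, continue the alternating breadth-first exploration past the exact-coupling window. Each continuing side stochastically dominates a supercritical Galton--Watson tree whose offspring law is $k$ copies of $\Binom(n-o(n),\lambda/n)$, of mean $k\lambda(1-o(1))>1$, the $o(n)$ absorbing the previously explored vertices (always $O(\sqrt{n\log n})$) and the discarded collided branches. Standard facts about supercritical branching (the $Z_m/(k\lambda)^m$ martingale together with a level-by-level Chernoff bound) then give: once a frontier exceeds $\mathrm{polylog}(n)$, collisions cost only a $o(1)$ fraction per level, and within $O(\log n)$ further levels the frontier reaches $\sqrt{n\log n}$ red leaves, with conditional probability $1-o(1)$ — and since $\ell_0\to\infty$, the probability that \emph{all} $\ell_0$ subtrees on a side die out is $o(1)$. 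Growing the left side to completion first and then the right side while avoiding the left side's $O(\sqrt{n\log n})$ vertices keeps the two trees vertex-disjoint, at the cost of $O(\log n)$ discarded right branches. Combining with Stage~1: with probability $(1-\rho)^2-o(1)$ we obtain two vertex-disjoint alternating trees rooted at $u_0$ and $u_0'$, each with at least $\sqrt{n\log n}$ red leaves on its still-unexpanded frontier.

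\textbf{Stage 3 (sprinkling and conclusion).} Mark, at the outset, $\gamma n$ pairwise vertex-disjoint red edges of $H^*$ — a function of $H^*$ alone, hence independent of the blue edges $G_0$; the exploration ignores these marks. Since only $O(\sqrt{n\log n})$ vertices are explored, $\Theta(n)$ marked edges $(a,b)$ have both endpoints unexplored, and for each such edge the blue edges from $a$ and from $b$ to the two frontiers were never inspected during the exploration, hence are i.i.d.\ $\Bern(\lambda/n)$ given everything revealed. Thus $a$ is blue-joined to some left frontier leaf and $b$ to some right frontier leaf with probability $\bigl(1-(1-\lambda/n)^{\sqrt{n\log n}}\bigr)^2=\Theta(\log n/n)$, independently across the $\Theta(n)$ vertex-disjoint marked edges; so with probability $1-(1-\Theta(\log n/n))^{\Theta(n)}=1-n^{-\Theta(1)}$ some marked edge $(a,b)$ links the two sides. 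On that event, the left path $u_0\to u$, the blue edge $u$--$a$, the red edge $a$--$b$, the blue edge $b$--$v$, the reversed right path $v\to u_0'$, and the red center edge $e$ close up into an alternating cycle through $e$: the parity checks out because the frontier leaves $u,v$ are reached by red edges, so the blue--red--blue middle alternates, while at the roots the blue first-edges of the two paths meet the red edge $e$. Multiplying the stage probabilities and letting $n\to\infty$ (with $t(n)\to\infty$) gives $\liminf_n\prob{e\text{ lies on an alternating cycle}}\ge(1-\rho)^2$.

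\textbf{Main obstacle.} Stage~2 is the technical heart. At scale $\sqrt{n\log n}$ the exploration is no longer tree-like — blue--blue, red--red, and left--right collisions each occur $\Theta(\log n)$ times in expectation — so one cannot invoke Lemma~\ref{lmm:coupling} throughout, and must instead run the exact coupling only to the slowly-growing depth $t(n)$ (to extract the \emph{precise} factor $(1-\rho)^2$, with the full intensity $\lambda$ and no dilution) and then switch to a robust stochastic-domination argument, verifying carefully that (i) discarding collided branches costs a negligible fraction per level once the frontier is $\mathrm{polylog}(n)$, (ii) removing $o(n)$ vertices keeps the offspring mean above $1$, and (iii) the growth concentrates over the $\Theta(\log n)$ levels. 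A secondary delicate point is that the $(1-\rho)^2$ must originate entirely from the exactly-coupled initial phase, with everything afterward being merely a $1-o(1)$ event, and that the blue edges used in the sprinkling are provably independent of all information revealed during the exploration — which is precisely why we connect through unexpanded frontier leaves and through reserved edges whose endpoints lie outside the explored set.
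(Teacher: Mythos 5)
Your construction is correct, and its skeleton --- a two-sided alternating tree grown from $e$ to about $\sqrt{n\log n}$ red vertices per side, closed into a cycle through a reserved red edge of $H^*$ by a blue--red--blue sprinkle, with the alternation check at the roots and at the linking edge exactly as you state --- is the same as the paper's, which implements it via Algorithms~\ref{alg:matching} and~\ref{alg:tree-construction} with $\ell=\sqrt{n\log n}$ and $K=1$. Where you genuinely diverge is in how the factor $(1-\rho)^2$ is harvested. The paper dominates the entire growth at once by a branching process with offspring law $k\cdot\Binom(n-2\gamma n,\lambda/n)$ and invokes Lemma~\ref{lmm:branching_n_convergence}; strictly speaking that comparison process has mean $k\lambda(1-2\gamma)$, so its survival probability converges to the $\gamma$-diluted analogue of $(1-\rho)^2$, and recovering the clean constant requires taking $\gamma$ arbitrarily small and using continuity of the fixed point in the dilution parameter. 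Your two-phase argument --- exact Poisson coupling (Lemma~\ref{lmm:coupling}) up to a slowly growing depth $t(n)$, which yields $(1-\rho_{t(n)})^2\to(1-\rho)^2$ at full intensity $\lambda$, followed by a cruder stochastic-domination argument that only needs to succeed with probability $1-o(1)$ to push the frontier to $\sqrt{n\log n}$ --- sidesteps that dilution issue, at the price of the Stage-2 bookkeeping you yourself flag (starting from $\ell_0\to\infty$ frontier vertices, discarding collided branches, a stopping rule for the first level whose unexpanded red frontier exceeds $\sqrt{n\log n}$); all of this is standard but must be written out. One further difference worth noting: because your exploration ignores the reserved marks, blue edges from \emph{expanded} tree vertices to marked endpoints may already have been revealed, so only the unexpanded frontier leaves can serve in the sprinkle --- you correctly restrict to those --- whereas the paper's $\mathcal{A},\mathcal{F}$ bookkeeping excludes reserved endpoints from the child search altogether, so blue edges to reserved tree-facing endpoints remain unrevealed for every red vertex of the tree. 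Both routes deliver the stated bound.
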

\begin{proof}
Let $e=(i,i')$. We build a two-sided tree $T$ containing
$e$ similarly to the impossibility proof of almost exact recovery. We then create a cycle by connecting two sides of the tree to the same reserved red edge. The steps are outlined below:
\begin{enumerate}
    \item Reserve a set of $\gamma n$ red edges using Algorithm \ref{alg:matching}, avoiding $e$ and its incident red edges, where $\gamma > 0$ is a suitably small constant. For each edge $(u,v)$ with $u < v$, call $u$ the ``left'' endpoint and call $v$ the ``right'' endpoint. (Note that we will use only one reserved red edge to complete a cycle, so we do not need any further specifications for the edges.)
    \item Based on the set of reserved edges, determine the set of available vertices $\mathcal{A}$ and the set of full-branching vertices $\mathcal{F}$.
    \item Build a two-sided tree $T = (L,R)$ from $e$ by applying Algorithm \ref{alg:tree-construction} on input $\mathcal{A}$, $\mathcal{F}$, and $\ell = \sqrt{n \log n}$ (which is the size parameter). Set $K = 1$ since only one tree needs to be constructed.
    \item Find red vertices $u \in L, v \in R$ and a reserved edge $e'$ such that $u$ is connected to the left endpoint of $e'$ and $v$ is connected to the right endpoint of $e'$. (This step essentially replaces the 5-edge construction with a 3-edge construction.)
\end{enumerate}
Observe that if the above procedure is successfully executed, then an alternating cycle is constructed in the final step.

Since the tree contains at most $2(2\ell + k) = O(n)$ vertices by Proposition \ref{prop:tree.facts} (a), the size of $\mathcal{F}$ is greater than $n - 2\gamma n$ during the tree construction process. Hence, we can couple its growth to a two-sided branching distribution with offspring distribution $k \cdot \text{Binom}(n - 2\gamma n, \lambda/n)$. As long as the branching process does not die out, which happens with probability $(1-\rho_n)^2$, then the two-sided tree has at least $\sqrt{n\log n}$ red vertices on each side.

It remains to lower-bound the probability of creating a cycle. Observe that $L$ is connected to the left endpoint of a given reserved edge $e'$ with probability at least $1- (1-\lambda/n)^{\sqrt{n \log n}} \ge (\lambda/2) \sqrt{\log n/n} 
$, where the inequality holds for all sufficiently large $n$ because $1-(1-x)^m \ge  1- \exp(-mx) \ge mx/2$ for all $mx\le 1.$
Therefore, both $L$ and $R$ are connected to $e'$ (and connected on the correct side) with probability at least 
$\lambda^2 \log n/(4n).$
It follows that $L$ and $R$ are simultaneously connected to some reserved edge with probability at least
\begin{align*}
1- \left[ 1- \lambda^2 \log n /(4n) \right]^{\gamma n}
\ge 1- \exp\left( \gamma n \lambda^2 \log n / (4n) 
\right) 
=1-\exp(-\Omega(\log n)).
\end{align*}

In conclusion, we have shown that there exists an alternating cycle containing $e$ with probability at least $(1-\rho_n)^2 (1-o(1))$. The claim follows by noting $\lim_{n\to \infty} \rho_n=\rho$ in view of~\prettyref{lmm:branching_n_convergence}.
\end{proof}

Combining~\prettyref{lmm:alternating} and~\ref{lmm:error_lower_bound_pruning}, we have shown that $\liminf_{n\to \infty} \prob{e \in C_n \mid H^*} \ge (1-\rho)^2.$

\subsection{Proof of Exact Recovery}
If $k\lambda=o(1),$ we aim to show the core $C_n$ is empty.
To this end, we provide a sufficient condition under which $C_n$ is empty. 
We first define an ``almost'' alternating cycle. 
\begin{definition}
We call a cycle $(e_1, e_2, \ldots, e_t)$ almost alternating if the edges alternate between planted and unplanted except for the last one, that is,  $e_{i}$ and $e_{i+1}$ have different colors for all $1 \le i \le t-1$, while $e_t$ and $e_1$ may have the same color. 
\end{definition}
By definition, an ``almost'' alternating cycle of even length must be completely alternating.
We now claim that if graph $G$ does not contain any ``almost'' alternating cycle, then the core $C_n$ must be empty. To prove this, suppose for the sake of contradiction that $C_n$ is non-empty. Pick any planted edge $e$ in $C_n$ and consider an alternating path $P$ starting at edge $e$ that has maximal length among all such alternating paths whose edges all lie entirely in $C_n.$
Let $u$ denote the endpoint of the path and $e'$
denote the last edge on the path incident to $u.$ 
We claim that $u$ must be incident to another edge $(u,v)$ in $C_n$, not belonging to $P$, whose color is different from that of $e'$.  
Indeed, suppose this is not the case. Then all edges incident to $u$  have either red color (in which case, the remaining capacity $\kappa_u$ equals the degree of $u$) or blue color (in which case, the capacity $\kappa_u$ is zero). In either case,  the endpoint $u$ would be removed by the pruning procedure, contradicting the fact that $u \in C_n.$
Next, we argue that $v$ 
cannot lie on the alternating path $P$; otherwise, this would create an ``almost'' alternating cycle, contradicting our standing assumption that no such cycle exists.
Hence, we can extend the alternating path $P$ by appending the edge $(u,v)$,
obtaining a strictly longer alternating path contained entirely in $C_n.$ This contradicts the maximality of $P.$ Therefore, the core 
$C_n$ must be empty.

Next, we show that if $\lambda k=o(1)$, then with high probability, the graph $G$ does not contain any ``almost'' alternating cycle. Recall that in~\prettyref{eq:exact_positve}, we have already shown that the graph $G$ does not contain alternating cycles with high probability. 
Thus, it remains to show that the graph $G$ does not contain any ``almost'' alternating cycles with odd lengths. We first enumerate the number of ``almost'' alternating cycles with $t+1$ blue edges and $t$ red edges. Suppose the vertices on the alternating cycle are given by $(v_1, v_2, \ldots, v_{2t+1})$ in order, where $(v_1, v_2)$ is a red edge,
Then we can determine the labels of $v_i$'s, where $v_i$ has at most $n$ vertex labels and $v_{i+1}$ has at most $k$ vertex labels for all odd $i$ from $1$ to $2t+1$. Thus in total, we have at most $n^{t+1}k^t$ different such ``almost'' alternating cycles. Each cycle appears with probability $(\lambda/n)^{t+1}$. Thus, the probability that $G$ contains an ``almost'' alternating cycle with $t+1$ blue edges and $t$ red edges is at most $n^{t+1} k^t (\lambda/n)^{t+1} = (\lambda k)^t \lambda$. 

Next, we consider ``almost'' alternating cycles with $t$ blue edges and $t+1$ red edges. Suppose the vertices on the alternating cycle are given by $(v_1, v_2, \ldots, v_{2t+1})$ in order, where $(v_1, v_2)$ is a red edge and $(v_{2t+1},v_1)$ is a red edge. Then we can determine the labels of the $v_i$'s, where $v_i$ has at most $n$ vertex labels and $v_{i+1}$ has at most $k$ vertex labels for all odd $i$ from $1$ to $2t$. The last vertex $v_{2t+1}$ has at most $k$ labels, as it is connected to $v_1$ via a red edge. Thus in total, we have at most $n^{t} k^{t+1}$ different such ``almost'' alternating cycles. Thus, the probability that $G$ contains an ``almost'' alternating cycle with $t$ blue edges and $t+1$ red edges is at most $n^{t} k^{t+1} (\lambda/n)^{t} = (\lambda k)^t k$. 

Combining the above two cases, we get that if $\lambda k =o(1),$  then
$$
\prob{\text{$G$ contains an ``almost'' alternating cycle of odd length}} \le \sum_{t=1}^{nk/2} (\lambda k)^t (k+\lambda) 
=o(1)
$$

Combining this with our previous claim, we get that with high probability $C_n$ is empty.

\section{Conclusions and Discussions} \label{sec:conclusions}
In this paper, we have characterized the phase transitions for recovering a $k$-factor planted in an \ER random graph $\mathcal{G}(n,\lambda/n)$, as the average degree $\lambda$ varies. Additionally, we have established algorithmic limits by analyzing a linear-time iterative pruning algorithm. Some open problems arising from this work include:
\begin{itemize}
    \item \textit{What is the minimum reconstruction error when $\lambda k = \Theta(1)$?} Theorem \ref{thm:size} shows that iterative pruning achieves a reconstruction error of $(1-\rho)^2 + o(1)$. 
    \item \textit{Recovery of specific planted graphs}: What can be said about the case where $H^*$ is a graph which is known up to isomorphism? In this paper, we have treated only the case where $H^*$ is a Hamiltonian cycle (Appendix \ref{sec:Hamiltonian}). Can we predict the qualitative nature of the phase transition for recovering a graph $H^*$, based on its graph properties?
    \item \textit{Extensions to weighted graphs}: Do similar results carry over to weighted graphs? The weighted case of a planted matching $(k=1)$ was studied by \cite{Ding2023}.
    \item \textit{Extensions to growing $k$.}
    Our current analysis assumes that $k$ is fixed and does not grow with $n$. It would be interesting to extend the results to the regime where $k$ grows with $n$. We expect an ``all-or-nothing'' phase transition to occur when $k$ grows sufficiently fast with $n.$ 
    \item \textit{Extensions to $k$-factors spanning a subset of vertices.}  
    When the planted $k$-factor spans a linear number of vertices, say $\delta n$ for a constant $\delta \in (0,1)$, and $k=2$, we expect that there will be both an exact recovery and a partial recovery regime (see \cite{gaudio2025finding}). 
    However, when planting a $k$-regular graph on a sublinear number of vertices, this phenomenon may no longer hold, and we expect a different behavior. In particular, we suspect that when the planted graph is small, it is unlikely to join with edges in the background graph to form spurious $k$-regular graphs. Therefore, we speculate a sharp ``all-or-nothing" phase transition, where the threshold coincides with when $k$-regular graphs start to emerge in the background graph. We also note that different phenomenon may arises when $k=1$. When $k=1$, the graph largely consists of isolated edges, and random sampling of the edges should achieve partial recovery in a certain regime.
\end{itemize}
Finally, we note that a very recent independent work~\cite{lee2025fundamental} has also established a phase transition for recovering certain weakly dense subgraphs $H$, where $|E(H)| =\omega(|V(H)|\log|V(H)|$,
planted in an \ER random graph. Remarkably, it demonstrates that the normalized minimum mean-squared error (MMSE) exhibits a staircase-like behavior, jumping discontinuously from 0 to 1 at critical thresholds. In contrast, our work focuses on a specific family of sparse and balanced subgraphs—namely, $k$-factors—for which the normalized MMSE is expected to rise continuously from 0 to 1.

\begin{appendix}
\section{Equivalence between Hamming Error and  Mean-Squared Error}
\label{sec:equivalence}
We can equivalently represent the hidden subgraph $H^*$ in the complete graph $K_n$ as a binary vector $\beta^* \in \{0,1\}^N$, where $N=\binom{n}{2}.$
Similarly, an estimator $\hat{H}(G)$ can be represented as $\hat{\beta}(G) \in \reals^N$, where here we allow $\hat{\beta}$ to possibly take real values. There are two natural error metrics to consider:
\begin{itemize}
\item Hamming error: $\HE(\hat{\beta})=\expect{\|\hat{\beta}-\beta^*\|_0}$;
\item Mean-squared error: $\MSE(\hat{\beta})=\expect{\|\hat{\beta}-\beta^*\|_2}$,
\end{itemize}
where $\|\cdot\|_p$ denote the $L_p$ vector norm. Note that when $\hat{\beta}$ is a binary vector, $\HE(\hat{\beta})=|\hat{H}\symdiff H^*|$. The minimum mean-squared error $\inf_{\hat{\beta}} \MSE(\hat{\beta})$ is known as $\MMSE.$

The following proposition relates the two error metrics (See, e.g. \cite[Proposition 5]{reeves2021all} and the proof therein).
\begin{proposition}\label{prop:equivalence}
It holds that 
$$
\frac{1}{4} \inf_{\hat{\beta}} \HE(\hat{\beta})\le \MMSE \triangleq \inf_{\hat{\beta}} \MSE(\hat{\beta}) \le \inf_{\hat{\beta}} \HE(\hat{\beta}).
$$
Then we have the following two claims:
\begin{enumerate}
\item The almost exact recovery in $\MSE$ 
is equivalent to the almost exact recovery in $\HE$. 
\item The partial recovery in $\HE$ implies the partial recovery in $\MSE$. 
\end{enumerate}
\end{proposition}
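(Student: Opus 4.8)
The plan is to follow the standard Bayes-risk argument (as in \cite[Proposition 5]{reeves2021all}): identify the optimal estimator for each loss, write both minimum risks as sums of coordinate-wise functionals of the posterior marginals, and compare these functionals pointwise. For each $i \in [N]$ write $p_i \triangleq \mathbb{P}(\beta^*_i = 1 \mid G)$ for the $i$-th posterior marginal, which is Bernoulli given $G$. First I would recall that the conditional mean $\hat\beta_i = p_i$ minimizes the squared loss coordinate-wise, so $\MMSE = \inf_{\hat\beta}\expect{\|\hat\beta-\beta^*\|_2^2} = \sum_{i=1}^N \expect{p_i(1-p_i)}$, while the coordinate-wise MAP rule $\tilde\beta_i = \indc{p_i \ge 1/2}$ minimizes the $0/1$ loss coordinate-wise, so $\inf_{\hat\beta} \HE(\hat\beta) = \sum_{i=1}^N \expect{\min(p_i, 1-p_i)}$. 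Since $\tilde\beta$ is a genuine binary (subgraph) estimator, this second infimum is attained within the class of binary estimators; conversely, it is never beneficial to use non-binary coordinates for $\HE$, since a non-binary coordinate always disagrees with $\beta^*_i \in \{0,1\}$.

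Next I would establish the two-sided bound by two elementary observations. For the upper bound $\MMSE \le \inf_{\hat\beta}\HE(\hat\beta)$: because the optimal Hamming estimator $\tilde\beta$ is binary, $\|\tilde\beta - \beta^*\|_2^2 = \|\tilde\beta - \beta^*\|_0$ pointwise, hence $\MSE(\tilde\beta) = \HE(\tilde\beta) = \inf_{\hat\beta}\HE(\hat\beta)$ and a fortiori $\MMSE \le \inf_{\hat\beta}\HE(\hat\beta)$. For the lower bound $\tfrac14\inf_{\hat\beta}\HE(\hat\beta) \le \MMSE$: use the pointwise inequality $\min(q, 1-q) \le 2\,q(1-q)$ valid for all $q \in [0,1]$ (check the cases $q \le 1/2$ and $q \ge 1/2$ separately), sum over $i$ and take expectations to obtain $\inf_{\hat\beta}\HE(\hat\beta) \le 2\,\MMSE \le 4\,\MMSE$. (The constant $2$ in fact works; $4$ as stated is more than enough.)

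Finally I would deduce the two claims from the sandwich $\tfrac14\inf_{\hat\beta}\HE(\hat\beta)\le \MMSE \le \inf_{\hat\beta}\HE(\hat\beta)$, keeping track of the normalization $|H^*| = nk/2$ and of the fact that the trivial estimator $\hat\beta = 0$ has $\HE(0)=\MSE(0) = \|\beta^*\|_0 = |H^*|$ (again since $\beta^*$ is binary). Dividing the sandwich by $|H^*|$ shows that $\MMSE/|H^*|$ and $\inf_{\hat\beta}\HE(\hat\beta)/|H^*|$ differ by at most a factor $4$, so one is $o(1)$ iff the other is; this is claim~1. For claim~2, if $\inf_{\hat\beta}\HE(\hat\beta) \le (1-\Omega(1))|H^*|$ then $\MMSE \le \inf_{\hat\beta}\HE(\hat\beta) \le (1-\Omega(1))|H^*|$, so $\MSE$-partial recovery follows; the converse fails since the reverse bound only yields $\inf_{\hat\beta}\HE(\hat\beta) \le 4\,\MMSE$, which can exceed $|H^*|$ even when $\MMSE$ is bounded away from $|H^*|$. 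I do not anticipate a real obstacle: the only points needing care are the pointwise inequality constants and, if one insists on phrasing the recovery notions as high-probability statements about $\risk(H^*,\hat H)$ rather than about expected loss, translating between the two via Markov's inequality in one direction and boundedness of $\risk$ for the relevant estimators in the other.
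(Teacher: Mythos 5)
Your proposal is correct and takes essentially the same route the paper relies on: the paper simply defers to \cite[Proposition 5]{reeves2021all}, whose proof is exactly this coordinate-wise Bayes-risk comparison (posterior mean versus coordinate-wise MAP, with a pointwise inequality between $\min(q,1-q)$ and $q(1-q)$), the same decomposition the paper alludes to right after the proposition. Your bound $\min(q,1-q)\le 2q(1-q)$ in fact gives the sharper constant $\tfrac12$ in place of $\tfrac14$, which of course still implies the stated claim.
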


Note that $\inf_{\hat{\beta}} \HE(\hat{\beta})$ is achieved by the maximum posterior marginal, that is, $\hat{\beta}_e= \indc{\expect{\beta^*_e|G}\ge 1/2}$,
while $\inf_{\hat{\beta}} \MSE(\hat{\beta})$ is achieved by the posterior mean, that is, $\hat{\beta}_e=\expect{\beta^*_e|G}$.
Therefore,
\begin{align*}
\MMSE & = 
\expect{\| \beta^\ast - \expect{\beta^\ast|G}\|_2^2}  \\
& =\expect{\|\beta^\ast\|_2^2} - \expect{\iprod{\beta^\ast}{\expect{\beta^\ast|G}}} \\
&=  nk/2 - \expect{\iprod{\beta^\ast}{\tilde{\beta}}},
\end{align*}
where $\tilde{\beta}$ denotes a $k$-factor randomly sampled from the posterior distribution and the last equality holds because $\tilde{\beta}$ equals 
$\beta^*$ in distribution conditional on $G$. Therefore, $\expect{\iprod{\beta^\ast}{\beta'}}=o(nk)$ implies
$\MMSE=(1-o(1))nk/2$ and hence the impossibility of partial recovery in $\MSE.$

\section{Convergence of extinction probability of branching process}\label{app:extinct}

Consider a branching process with offspring distribution $\mu_n$ supported on the non-negative integers. 
Let $\rho_{n,t}$ denote the probability that the branching process dies out by depth $t$.
Define 
$$
\phi_n(x)=\mathbb{E}_{\xi_n \sim \mu_n}\left[x^{\xi_n}\right]
$$
for $x \in [0,1].$
One can check that 
$\phi_n$ is increasing and convex in $[0,1]$ with $\phi_n(1) =1$, $\phi_n(0)=\prob{\xi_n=0}$, 
$\phi_n'(1)=\expect{\xi_n}.$
Then we have the following standard result.
\begin{lemma}[Theorem 2.1.4 in~\cite{durrett2007random}] \label{lmm:braching_standarnd}
\begin{itemize}
    \item $\rho_{n,1}=0$ and 
$\rho_{n,t}=\phi_n(\rho_{n,t-1})$.
\item If $\expect{\xi_n} >1,$ then there is a unique fixed point $\rho_{n,\infty}$ on $[0,1)$  so that $\rho_{n,\infty}=\phi_n(\rho_{n,\infty})$. Moreover, $\rho_{n,t}$ is monotone increasing in $t$ and $\lim_{t\to \infty} \rho_{n,t} = \rho_{n,\infty}$. 
\end{itemize}
\end{lemma}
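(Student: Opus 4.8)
The plan is to treat the two bullets separately. For the first, I would argue by a one-step conditioning. At depth $1$ the process consists of the single founding particle, so it cannot have died out, giving $\rho_{n,1}=0$. For $t\ge 2$, conditioning on the number of children $\xi_n$ of the root: given $\xi_n=j$, the process dies out by depth $t$ precisely when each of the $j$ independent subtrees hanging off the children dies out by depth $t-1$, an event of probability $\rho_{n,t-1}^{\,j}$; averaging over $j$ yields $\rho_{n,t}=\sum_{j\ge 0}\mathbb{P}(\xi_n=j)\,\rho_{n,t-1}^{\,j}=\phi_n(\rho_{n,t-1})$.

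For the second bullet, I would first record the analytic properties of $\phi_n$ on $[0,1]$: it is non-decreasing and convex (its power-series coefficients $\mathbb{P}(\xi_n=j)$ are non-negative), $\phi_n(1)=1$, $\phi_n(0)=\mathbb{P}(\xi_n=0)\ge 0$, and the secant slopes $(1-\phi_n(s))/(1-s)$ are non-decreasing and increase to $\mathbb{E}[\xi_n]$ as $s\uparrow 1$. A key preliminary observation is that $\mathbb{E}[\xi_n]>1$ forces $\mathbb{P}(\xi_n\ge 2)>0$ (otherwise $\xi_n\in\{0,1\}$ almost surely and $\mathbb{E}[\xi_n]\le 1$), hence $\phi_n$ is \emph{strictly} convex on $(0,1)$. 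Setting $g(x)=\phi_n(x)-x$, the fact that the secant slope exceeds $1$ for $s$ near $1$ gives $g(s)<0$ for some $s\in(0,1)$, while $g(0)=\mathbb{P}(\xi_n=0)\ge 0$; the intermediate value theorem then produces a zero $\rho_{n,\infty}\in[0,1)$, and strict convexity (a strictly convex function has at most two zeros on $[0,1]$, one of which is $x=1$) makes it the unique such zero.

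Finally, for the monotonicity and convergence of $\rho_{n,t}$: monotonicity follows by induction, since $\rho_{n,2}=\phi_n(0)=\mathbb{P}(\xi_n=0)\ge 0=\rho_{n,1}$ and applying the non-decreasing map $\phi_n$ preserves the inequality $\rho_{n,t}\ge\rho_{n,t-1}$. A parallel induction gives $\rho_{n,t}\le\rho_{n,\infty}$ for all $t$, using $\phi_n(\rho_{n,\infty})=\rho_{n,\infty}$ and monotonicity of $\phi_n$. Thus $(\rho_{n,t})_t$ is non-decreasing and bounded by $\rho_{n,\infty}<1$, so it has a limit $L\le\rho_{n,\infty}$; passing to the limit in $\rho_{n,t}=\phi_n(\rho_{n,t-1})$ and using continuity of $\phi_n$ shows $L$ is a fixed point of $\phi_n$ in $[0,1)$, whence $L=\rho_{n,\infty}$ by uniqueness.

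I expect the only genuinely delicate step to be the second bullet: ruling out the degenerate offspring laws and, when $\mathbb{E}[\xi_n]=\infty$, justifying ``$g(s)<0$ for some $s<1$'' via the monotone limit of secant slopes of the convex function $\phi_n$ rather than a derivative at the boundary point $x=1$. The remaining steps are routine inductions together with the intermediate value theorem and continuity of $\phi_n$.
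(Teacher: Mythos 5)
Your proof is correct. The paper does not actually prove this lemma---it is quoted verbatim from Durrett (Theorem 2.1.4) as a standard fact---and your argument is essentially the standard textbook proof: the one-step conditioning giving $\rho_{n,t}=\phi_n(\rho_{n,t-1})$, convexity/monotonicity of the generating function, existence of a root in $[0,1)$ via the intermediate value theorem, uniqueness via strict convexity (which you correctly justify by noting $\expect{\xi_n}>1$ forces $\prob{\xi_n\ge 2}>0$), and monotone convergence of the iterates to that root. Your handling of the boundary slope through monotone secant slopes rather than $\phi_n'(1)$ is a nice touch that also covers infinite-mean offspring, though it is not needed for the paper's application where $\xi_n$ is $k\cdot\Binom(m_n,\lambda/n)$.
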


 In our problem, $\mu_n$ is $k \cdot \Binom(m_n,\lambda/n)$ where $\lim_{n\to\infty} m_n/n=\alpha.$ 
Then 
$$
\phi_n(x) = \left( 1- \frac{\lambda}{n}(1-x^k) \right)^{m}
$$
Note that $\phi_n(x)$ point-wisely converges to 
$$
\phi(x) = \mathbb{E}_{\xi \sim k\cdot \Pois(\alpha \lambda)} \left[ x^\xi\right]=\exp( -\alpha\lambda(1-x^k))
$$
for any $x \in [0,1].$

In the following, we further establish the limit of $\rho_{n,\infty}$ as $n \to \infty.$
\begin{lemma}\label{lmm:branching_n_convergence}
Suppose $k\lambda>1.$ Then $\lim_{n\to \infty} \rho_{n, \infty}=\rho_{\infty}$, where $\rho_{\infty}$ is the unique fixed point in $[0,1)$ so that 
$\rho_{\infty}=\phi(\rho_{\infty}).$
\end{lemma}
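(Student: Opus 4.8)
\textbf{Proof proposal for Lemma~\ref{lmm:branching_n_convergence}.} The plan is to show that the fixed points $\rho_{n,\infty}$ of the maps $\phi_n$ converge to the fixed point $\rho_\infty$ of the limiting map $\phi$, using monotonicity and convexity of all the maps involved together with the pointwise convergence $\phi_n \to \phi$ on $[0,1]$ already recorded in the excerpt. Throughout, write $\mu = k\lambda > 1$ in the relevant normalization (i.e.\ $\phi_n'(1) = k m_n \lambda / n \to k\alpha\lambda$ and $\phi'(1) = k\alpha\lambda$; for the application in Lemma~\ref{lmm:error_lower_bound_pruning} we have $m_n/n \to 1$ so $\alpha = 1$ and the limiting fixed-point equation is exactly \eqref{eq:rho}), and note that for $n$ large enough $\phi_n'(1) > 1$, so Lemma~\ref{lmm:braching_standarnd} applies and $\rho_{n,\infty}$ is well defined as the unique fixed point of $\phi_n$ in $[0,1)$.

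First I would establish an upper bound: $\limsup_{n\to\infty}\rho_{n,\infty} \le \rho_\infty$. Fix any $x_0 \in (\rho_\infty, 1)$. Since $\phi$ is convex on $[0,1]$ with $\phi(1)=1$ and $\phi(\rho_\infty)=\rho_\infty$, and since $\phi$ lies strictly below the diagonal on $(\rho_\infty,1)$ (strict convexity, as $\phi'' > 0$), we have $\phi(x_0) < x_0$. By pointwise convergence $\phi_n(x_0) \to \phi(x_0) < x_0$, so for all large $n$, $\phi_n(x_0) < x_0$. Because $\phi_n$ is increasing and convex with $\phi_n(1)=1$ and $\phi_n'(1) > 1$ (for large $n$), its unique fixed point in $[0,1)$ must lie in $[0,x_0)$: indeed $\phi_n(x) - x$ is convex, positive near $1$ on the left (since $\phi_n'(1)>1$ forces $\phi_n(x)>x$ for $x$ slightly below $1$)—more carefully, $\phi_n(x) - x$ is convex, equals $0$ at $x=1$, and is negative at $x_0$, so its other zero $\rho_{n,\infty}$ satisfies $\rho_{n,\infty} < x_0$. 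Hence $\limsup_n \rho_{n,\infty} \le x_0$, and letting $x_0 \downarrow \rho_\infty$ gives $\limsup_n \rho_{n,\infty} \le \rho_\infty$.

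Next I would establish the matching lower bound $\liminf_{n\to\infty}\rho_{n,\infty}\ge\rho_\infty$, which is the more delicate direction and the main obstacle, since one must rule out the fixed points drifting down to a point strictly below $\rho_\infty$. Here I would argue by iterating: for any fixed depth $t$, Lemma~\ref{lmm:braching_standarnd} gives $\rho_{n,\infty} \ge \rho_{n,t}$, where $\rho_{n,t}$ is the $t$-fold composition $\phi_n^{(t)}(0)$. For fixed $t$, pointwise convergence of $\phi_n$ together with continuity of each $\phi_n$ and $\phi$ (and the fact that composition of a fixed finite number of uniformly continuous maps on the compact interval $[0,1]$ is continuous in the appropriate sense) yields $\rho_{n,t} \to \rho_t := \phi^{(t)}(0)$ as $n\to\infty$; to make this rigorous one notes $\phi_n \to \phi$ uniformly on $[0,1]$ (a sequence of monotone functions converging pointwise to a continuous limit on a compact interval converges uniformly, by Dini's theorem applied after checking monotonicity of $\phi_n$ in $n$, or simply by an explicit estimate from the closed form $\phi_n(x) = (1 - \tfrac{\lambda}{n}(1-x^k))^{m_n}$), and uniform convergence is preserved under the finite composition. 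Therefore $\liminf_n \rho_{n,\infty} \ge \lim_n \rho_{n,t} = \rho_t$ for every fixed $t$. Finally, by Lemma~\ref{lmm:braching_standarnd} applied to the limiting process (valid since $\phi'(1) = k\alpha\lambda > 1$), $\rho_t \uparrow \rho_\infty$ as $t\to\infty$. Taking $t\to\infty$ gives $\liminf_n \rho_{n,\infty} \ge \rho_\infty$, which combined with the upper bound completes the proof.

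The step I expect to require the most care is justifying $\rho_{n,t}\to\rho_t$ uniformly in the relevant sense and extracting uniform convergence $\phi_n\to\phi$ from the closed form; this is routine analysis but must be done carefully because the exponent $m_n$ grows with $n$. An explicit bound such as $|\phi_n(x) - \phi(x)| \le |(1-\tfrac{\lambda}{n}(1-x^k))^{m_n} - e^{-\tfrac{\lambda m_n}{n}(1-x^k)}| + |e^{-\tfrac{\lambda m_n}{n}(1-x^k)} - e^{-\alpha\lambda(1-x^k)}|$, with the first term controlled by $|(1-u)^m - e^{-mu}| \le m u^2$ for $u = \tfrac{\lambda}{n}(1-x^k) \in [0, \lambda/n]$ (giving $O(m_n/n^2) = o(1)$) and the second by $|m_n/n - \alpha| \to 0$, suffices and shows the convergence is uniform over $x \in [0,1]$. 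The rest is a soft argument from convexity and monotonicity.
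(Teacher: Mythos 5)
Your proposal is correct, but it deduces fixed-point convergence from function convergence by a different mechanism than the paper. Both arguments rest on the same analytic input -- uniform convergence of $\phi_n$ to $\phi$ on $[0,1]$, which you control with the closed-form estimate $|(1-u)^{m_n}-e^{-m_n u}|\le m_n u^2$ plus a Lipschitz bound on the exponential, essentially equivalent to the paper's explicit bound $d_n$ (your Dini remark is shakier, since monotonicity of $\phi_n$ in $n$ is unclear, but your explicit estimate makes it moot). From there the paper runs a quantitative contraction induction: it picks $\epsilon$ with $\gamma=\phi'(\rho_\infty+\epsilon)<1$ (using strict convexity to get $\phi'(\rho_\infty)<1$) and proves by induction that $|\rho_{n,t}-\rho_t|\le\sum_{s<t}\gamma^s d_n$, yielding the rate $|\rho_{n,\infty}-\rho_\infty|\le d_n/(1-\gamma)$. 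You instead sandwich: for the upper bound you use convexity of $\phi_n(x)-x$ together with $\phi(x_0)<x_0$ for any $x_0\in(\rho_\infty,1)$ to force $\rho_{n,\infty}<x_0$ eventually, and for the lower bound you use $\rho_{n,\infty}\ge\rho_{n,t}=\phi_n^{(t)}(0)\to\phi^{(t)}(0)=\rho_t\uparrow\rho_\infty$ for each fixed $t$. Your route is softer and avoids the derivative-at-the-fixed-point bookkeeping (indeed the limsup step needs only pointwise convergence at one point, and the liminf step only convergence of finitely many compositions), whereas the paper's induction buys an explicit convergence rate $O(d_n)$, which the soft argument does not give. One small slip: in the upper-bound step you first assert that $\phi_n'(1)>1$ forces $\phi_n(x)>x$ just below $1$ -- the inequality goes the other way ($\phi_n(x)<x$ there) -- but this does not matter because your ``more carefully'' version (convexity of $\phi_n(x)-x$, zero at $1$, negative at $x_0$, hence the zero $\rho_{n,\infty}$ lies below $x_0$) is the argument actually used and is correct.
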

\begin{proof}
Since $\phi_n(x)$'s are continuous on $[0,1]$, it follows that $\phi_n(x)$ uniformly converges to $\phi(x)$ on $[0,1]$, that is, $\lim_{n\to \infty} 
\sup_{x \in [0,1]} |\phi_n(x)-\phi(x)|=0$. More specifically, 
we claim that 
\begin{align}
\sup_{x \in [0,1]} |\phi_n(x)-\phi(x)|
\le d_n \triangleq 
\max\left\{ e^{\lambda |m/n-\alpha| } -1,
\left| e^{m \log (1-\lambda/n) +\alpha \lambda} -1 \right|\right\}. \label{eq:claim_uniform}
\end{align}
Clearly, $\lim_{n\to \infty} d_n= 0.$
To prove~\prettyref{eq:claim_uniform}, note that by replacing $1-x^k$ with $x$, it suffices to show
$$
\sup_{x \in [0,1]} \left|\left( 1- \lambda x/n \right)^m - e^{-\alpha \lambda x} \right| \le d_n
$$
Now, 
$$
\left|\left( 1- \lambda x/n \right)^m - e^{-\alpha \lambda x} \right|
= \left| e^{-\alpha \lambda x} 
\left( e^{m \log (1-\lambda x/n) + \alpha \lambda x}-1\right)\right|
\le \left|
 e^{m \log (1-\lambda x/n) + \alpha \lambda x}-1\right|.
$$
Note that $h(x) \triangleq m \log (1-\lambda x/n) + \alpha \lambda x $ is concave in $x$. Thus, for all $x \in [0,1],$
$$
h(x) \ge \min \{h(0), h(1)\}
=\min\{0, m \log (1-\lambda /n) + \alpha \lambda\}
$$
Moreover, since $\log (1-\lambda x/n) \le -\lambda x /n$, it follows that 
$
h(x) \le -\lambda x m/n
 + \alpha \lambda x \le \lambda |m/n-\alpha|
$
for all $x \in [0,1].$
In conclusion, we get that 
\begin{align*}
\sup_{x \in [0,1]} \left|\left( 1- \lambda x/n \right)^m - e^{-\alpha \lambda x} \right|
&\le \sup_{x \in [0,1]} \left|
 e^{h(x) }-1\right|\\
&\le \max\left\{ e^{\lambda |m/n-\alpha| } -1,
\left| e^{m \log (1-\lambda/n) +\alpha \lambda} -1 \right|\right\} \triangleq d_n.
\end{align*}

Now, suppose $\alpha\lambda k>1$ and let $\phi_\infty$ denote the unique fixed point on $[0,1)$ such that $\rho_\infty=\phi(\rho_\infty)$.
 Then we prove the following claim that $\lim_{n\to \infty} \rho_{n,\infty}=\rho_\infty.$  

Note that $\phi'(\rho_\infty)<1$; otherwise, by the strict convexity of $\phi(x)$, $\phi(x)>x$ for all $x \in [\rho_\infty, 1]$, which contradicts the fact that $\phi(1)=1$. By the continuity of $\phi'(x)$, there exists a small $\epsilon>0$ such that $\gamma\triangleq \phi'(\rho_\infty+\epsilon)<1.$

Recall that $d_n=\sup_{x \in [0,1]} |\phi_n(x)-\phi(x)|$. There exists $N$ such that 
for all $n \ge N, $ $d_n\le (1-\gamma)\epsilon.$

We prove by induction that for all $n \ge N$ and all $t \ge 0,$
\begin{align}
\left|\rho_{n,t} - \rho_{t}
\right| \le \sum_{s=0}^{t-1} \gamma^s d_n. \label{eq:induction_rho}
\end{align}
Note that \prettyref{eq:induction_rho} trivially holds when $t=0$, because $\rho_{n,0}=\rho_0=0.$
Suppose \prettyref{eq:induction_rho}  holds for $t-1$. Then,  
\begin{align*}
\left|\rho_{n,t} - \rho_{t}
\right| & = \left| \phi_{n}(\rho_{n,t-1})-\phi(\rho_{t-1})\right| \\
& \le  \left| \phi_{n}(\rho_{n,t-1})-
\phi(\rho_{n,t-1}) \right| + 
\left| \phi(\rho_{n,t-1}) - \phi(\rho_{t-1})\right|\\
& \overset{(a)}{\le} d_n + \gamma \left| \rho_{n,t-1} - \rho_{t-1} \right|\\
& \overset{(b)}{\le} 
\sum_{s=0}^{t-1} \gamma^s d_n,
\end{align*}
where $(a)$ holds because by the induction hypothesis $\rho_{n,t-1}
\le \rho_{t-1} + d_n/(1-\gamma) \le \rho_{\infty} +\epsilon $ and $\phi'(x) \le \gamma$ for all $x \in [0, \rho_\infty+\epsilon]$; $(b)$ holds by the induction hypothesis. 

Thus, we have shown that ~\prettyref{eq:induction_rho} also holds for $t.$ It follows that
$$
\left|\rho_{n,t} - \rho_{t}
\right| \le \sum_{s=0}^{t-1} \gamma^s d_n \le \frac{d_n}{1-\gamma}. 
$$
Taking the limit $t\to\infty$ on the above-displayed equation, we deduce that 
$$
\left|\rho_{n,\infty} - \rho_{\infty}\right|
\le \frac{d_n}{1-\gamma}.
$$
Finally, taking the limit $n \to \infty$ and noting that $\lim_{n\to \infty} d_n=0$, we get that $\lim_{n\to \infty} \rho_{n,\infty}=\rho_\infty$.
\end{proof}

\section{Finding a planted Hamiltonian Cycle}\label{sec:Hamiltonian}
All the positive results of this paper are stated by conditioning on $H^*$
 and consequently, continue to hold even if $H^*$ is constrained to be in a known isomorphism class, such as
 $H^*$ being a Hamiltonian cycle. In contrast, the negative results do not hold automatically and we need to check them separately. 

For the impossibility of exact recovery, the statement of Theorem \ref{thm:exact_recovery} that if $k\lambda=\Omega(1)$, then $G$ contains a $k$-factor $H \neq H^*$ with probability $\Omega(1)$ is still true as stated. However, $H$ would not necessarily be isomorphic to $H^*$ so its existence would not necessarily stop us from recovering $H^*$ successfully. In the case where $H^*$ is a Hamiltonian cycle, one can salvage that argument by the following simple change. Assign the cycle $H^*$ a direction and suppose $H^*$ is given by $(v_1, \ldots, v_n, v_{n+1}\equiv v_1)$. For two  nonadjacent edges in $H^*$, say $(v_i,v_{i+1})$ and
$(v_j, v_{j+1})$ for odd $i,j$ with $1 \le i<j \le n-1,$
let $\calE_{ij}$ denote the event that
the graph
$G$ has an edge connecting $v_i, v_j$ and an edge connecting $v_{i+1}, v_{j+1}$. Under event $\calE_{ij}$, replacing the original two edges $(v_i, v_{i+1}), (v_j,v_{j+1})$ with $(v_i,v_j), (v_{i+1},v_{j+1})$ would yield an alternate Hamiltonian cycle, rendering us unable to recover $H^*$ with more than a $1/2$ probability of success. 
Note that $\prob{\calE_{ij}}=\lambda^2/n^2.$
Moreover, the events $\calE_{ij}$ are mutually independent for all odd $i,j$ with $1 \le i<j \le n-1$. 
Therefore, if $\lambda=\Omega(1)$, then $\prob{\cup_{1\le i<j \le n} \calE_{ij}} =\Omega(1).$
Thus, any algorithm attempting to exactly recover $H^*$ fails with probability $\Omega(1)$.

In order to rule out almost exact recovery of a planted Hamiltonian cycle when $\lambda \ge 1/2+\epsilon$, we argue that a random $2$-factor is reasonably likely to be a cycle.

\begin{lemma}
Let $H$ be a 
random $2$-factor on $m$ vertices. With probability at least $1/m$, $H$ is a cycle.   
\end{lemma}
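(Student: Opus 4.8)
The plan is to reduce the statement to a purely enumerative inequality. Since $H$ is sampled uniformly from the collection of all $2$-factors on the vertex set $[m]$, and the number of Hamiltonian cycles on $m$ labeled vertices is $(m-1)!/2$ for $m\ge 3$, it suffices to show that the total number $F(m)$ of $2$-regular labeled graphs on $m$ vertices satisfies $F(m)\le m!/2$. Indeed, this immediately yields
\[
\mathbb{P}(H\text{ is a cycle}) \;=\; \frac{(m-1)!/2}{F(m)} \;\ge\; \frac{(m-1)!/2}{m!/2} \;=\; \frac{1}{m}.
\]

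To bound $F(m)$, I would pass to oriented structures. Given a $2$-factor, orient each of its cycles in one of the two possible directions; the result is a permutation of $[m]$ all of whose cycles have length at least $3$. Conversely, forgetting the orientations recovers the underlying $2$-factor, and a $2$-factor with $c$ cycles arises in exactly $2^c$ ways (one binary choice of direction per cycle, and the two directions of a cycle of length at least $3$ yield genuinely distinct permutation cycles). Hence, writing $D(m)$ for the number of permutations of $[m]$ with no fixed points and no $2$-cycles,
\[
D(m) \;=\; \sum_{H \text{ a } 2\text{-factor}} 2^{c(H)} \;\ge\; 2\,F(m),
\]
since every $2$-factor has at least one cycle. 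As $D(m)\le m!$ trivially, we conclude $F(m)\le m!/2$, which is exactly what is needed.

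The argument is short, so the only points that need care are the bijective bookkeeping — in particular that the two orientations of a cycle of length at least $3$ really give distinct permutations, which is where simplicity of the $2$-factor (ruling out loops and parallel edges) is used — and the small cases $m=3,4$, where one should note that $(m-1)!/2$ remains the correct count of Hamiltonian cycles and that the inequality $F(m)\le m!/2$ continues to hold (with room to spare). I do not anticipate any genuine obstacle beyond getting these details right.
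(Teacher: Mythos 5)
Your proposal is correct and follows essentially the same route as the paper: orient each cycle of a $2$-factor to obtain at least two distinct permutations per $2$-factor, conclude that the number of $2$-factors is at most $m!/2$, and compare with the $(m-1)!/2$ Hamiltonian cycles. The only difference is cosmetic — you make the $2^{c(H)}$ bookkeeping explicit, whereas the paper simply notes each $2$-factor yields at least two permutations.
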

\begin{proof}
There are $m!$ possible directed cycles with starting points on $m$ vertices, so there are $(m-1)!/2$ possible cycles on $m$ vertices. Meanwhile, every possible $2$-factor can be converted to the cycle decomposition of a permutation of the vertices. Specifically, we assign each of its cycles a direction and then we have the permutation map each vertex to the next vertex in its cycle. 
Since each $2$-factor has at least one cycle and each cycle has two choices of directions, 
it follows that each $2$-factor can be mapped to at least $2$ permutations. Moreover, no two different $2$-factors can yield the same permutation. 
It follows that the total number of $2$-factors on $m$ vertices is at most $m!/2$. Therefore, at least $1/m$ fraction of the $2$-factors on $m$ vertices are cycles. 
\end{proof}

Theorem \ref{thm:impossibility} says that if $\lambda \ge 1/2+\epsilon,$ then 
for any estimator $\widehat{H}$,
$$
\prob{ \ell(\hat{H}, H^*) \ge \frac{\delta}{k}  }
\ge 1- e^{-\Omega(n)}. 
$$
Let $\calC_n$ denote the set of all possible Hamiltonian cycles in the complete graph on $[n]$. It follows that 
\begin{align*}
\prob{ \ell(\hat{H}, H^*) < \frac{\delta}{k} \mid H^* \in \calC_n}
& =\frac{\prob{\ell(\hat{H}, H^*) < \frac{\delta}{k} , H^* \in \calC_n}}{\prob{H^* \in \calC_n}} \\
& \le \frac{\prob{\ell(\hat{H}, H^*) < \frac{\delta}{k}} }{\prob{H^* \in \calC_n}} \\
& \le n \prob{\ell(\hat{H}, H^*) < \frac{\delta}{k}}  \le e^{-\Omega(n)}.
\end{align*}
Finally, note that the planted $2$-factor model conditioned on $H^*$ being a Hamiltonian cycle is equivalent to the planted Hamiltonian cycle model. 
Thus, we conclude that under the planted Hamiltonian cycle model with $\lambda\ge 1/2+\epsilon$, 
for any estimator $\widehat{H}$, $\ell(H^*,\widehat{H})=\Omega(1)$ with probability at least $1-e^{-\Omega(n)}.$

The impossibility of partial recovery when $\lambda=\omega(1)$ under the planted Hamiltonian cycle model can be deduced from Theorem~\ref{thm:nothing} using the same argument as above.    
\end{appendix}

\paragraph*{Acknowledgements}
The authors thank Souvik Dhara, Anirudh Sridhar, and Mikl\'os R\'acz for helpful discussions. Thank you to the anonymous reviewers for their helpful comments, which improved the presentation. J. Gaudio is supported in part by an NSF CAREER award CCF-2440539. J. Xu is supported in part by an NSF CAREER award CCF-2144593.

\bibliographystyle{plain}
\bibliography{references}

@inproceedings{coja2022statistical,
  title={Statistical and computational phase transitions in group testing},
  author={Coja-Oghlan, Amin and Gebhard, Oliver and Hahn-Klimroth, Max and Wein, Alexander S and Zadik, Ilias},
  booktitle={Conference on Learning Theory},
  pages={4764--4781},
  year={2022},
  organization={PMLR}
}

@inproceedings{achlioptas2008algorithmic,
  title={Algorithmic barriers from phase transitions},
  author={Achlioptas, Dimitris and Coja-Oghlan, Amin},
  booktitle={2008 49th Annual IEEE Symposium on Foundations of Computer Science},
  pages={793--802},
  year={2008},
  organization={IEEE}
}

@article{lee2025fundamental,
  title={The Fundamental Limits of Recovering Planted Subgraphs},
  author={Lee, Daniel and Pernice, Francisco and Rajaraman, Amit and Zadik, Ilias},
  journal={arXiv preprint arXiv:2503.15723},
  year={2025}
}

@article{truong2020all,
  title={On the {All-or-Nothing} behavior of {Bernoulli} group testing},
  author={Truong, Lan V and Aldridge, Matthew and Scarlett, Jonathan},
  journal={IEEE Journal on Selected Areas in Information Theory},
  volume={1},
  number={3},
  pages={669--680},
  year={2020},
  publisher={IEEE}
}

@article{niles2020all,
  title={The {All-or-Nothing} phenomenon in sparse tensor {PCA}},
  author={Niles-Weed, Jonathan and Zadik, Ilias},
  journal={Advances in Neural Information Processing Systems},
  volume={33},
  pages={17674--17684},
  year={2020}
}

@article{wu2022settling,
  title={Settling the sharp reconstruction thresholds of random graph matching},
  author={Wu, Yihong and Xu, Jiaming and Yu, Sophie H.},
  journal={IEEE Transactions on Information Theory},
  volume={68},
  number={8},
  pages={5391--5417},
  year={2022},
  publisher={IEEE}
}

@article{reeves2021all,
  title={The all-or-nothing phenomenon in sparse linear regression},
  author={Reeves, Galen and Xu, Jiaming and Zadik, Ilias},
  journal={Mathematical Statistics and Learning},
  volume={3},
  number={3},
  pages={259--313},
  year={2021}
}

@inproceedings{ding2020consistent,
  title={Consistent recovery threshold of hidden nearest neighbor graphs},
  author={Ding, Jian and Wu, Yihong and Xu, Jiaming and Yang, Dana},
  booktitle={Conference on Learning Theory},
  pages={1540--1553},
  year={2020},
  organization={PMLR}
}

@article{moharrami2021planted,
  title={The planted matching problem: Phase transitions and exact results},
  author={Moharrami, Mehrdad and Moore, Cristopher and Xu, Jiaming},
  journal={The Annals of Applied Probability},
  volume={31},
  number={6},
  pages={2663--2720},
  year={2021},
  publisher={Institute of Mathematical Statistics}
}

@article{kuvcera1995expected,
  title={Expected complexity of graph partitioning problems},
  author={Ku{\v{c}}era, Lud{\v{e}}k},
  journal={Discrete Applied Mathematics},
  volume={57},
  number={2-3},
  pages={193--212},
  year={1995},
  publisher={Elsevier}
}

@article{bagaria2020hidden,
  title={Hidden {Hamiltonian} cycle recovery via linear programming},
  author={Bagaria, Vivek and Ding, Jian and Tse, David and Wu, Yihong and Xu, Jiaming},
  journal={Operations Research},
  volume={68},
  number={1},
  pages={53--70},
  year={2020},
  publisher={INFORMS}
}

@inproceedings{massoulie2019planting,
  title={Planting trees in graphs, and finding them back},
  author={Massouli{\'e}, Laurent and Stephan, Ludovic and Towsley, Don},
  booktitle={Conference on Learning Theory},
  pages={2341--2371},
  year={2019},
  organization={PMLR}
}

@article{kotzig1968moves,
  title={Moves without forbidden transitions in a graph},
  author={Kotzig, Anton},
  journal={Matematick{\`y} {\v{C}}asopis},
  volume={18},
  number={1},
  pages={76--80},
  year={1968},
  publisher={Mathematical Institute of the Slovak Academy of Sciences}
}

@article{pevzner1995dna,
  title={{DNA} physical mapping and alternating {Eulerian} cycles in colored graphs},
  author={Pevzner, Pavel A.},
  journal={Algorithmica},
  volume={13},
  number={1},
  pages={77--105},
  year={1995},
  publisher={Springer}
}

@article{hajek2018recovering,
  title={Recovering a hidden community beyond the {Kesten--Stigum} threshold in {$O (|E| \log^{\star}| V|)$} time},
  author={Hajek, Bruce and Wu, Yihong and Xu, Jiaming},
  journal={Journal of Applied Probability},
  volume={55},
  number={2},
  pages={325--352},
  year={2018},
  publisher={Cambridge University Press}
}

@article{mossel2015reconstruction,
  title={Reconstruction and estimation in the planted partition model},
  author={Mossel, Elchanan and Neeman, Joe and Sly, Allan},
  journal={Probability Theory and Related Fields},
  volume={162},
  pages={431--461},
  year={2015},
  publisher={Springer}
}

@article{meijer2009algorithm,
  title={An algorithm for computing simple k-factors},
  author={Meijer, Henk and N{\'u}{\~n}ez-Rodr{\'\i}guez, Yurai and Rappaport, David},
  journal={Information Processing Letters},
  volume={109},
  number={12},
  pages={620--625},
  year={2009},
  publisher={Elsevier}
}

@article{sicuro2021planted,
  title={The planted k-factor problem},
  author={Sicuro, Gabriele and Zdeborov{\'a}, Lenka},
  journal={Journal of Physics A: Mathematical and Theoretical},
  volume={54},
  number={17},
  pages={175002},
  year={2021},
  publisher={IOP Publishing}
}

@book{durrett2007random,
  title={Random Graph Dynamics},
  author={Durrett, Richard},
  volume={200},
  number={7},
  year={2007}
}

@inproceedings{Mossel2023,
  title={Sharp thresholds in inference of planted subgraphs},
  author={Mossel, Elchanan and Niles-Weed, Jonathan and Sohn, Youngtak and Sun, Nike and Zadik, Ilias},
  booktitle={The Thirty Sixth Annual Conference on Learning Theory},
  pages={5573--5577},
  year={2023},
  organization={PMLR}
}

@book{bollobas2001random,
place={Cambridge}, edition={2}, series={Cambridge Studies in Advanced Mathematics}, 
  title={Random Graphs},
  author={Bollob{\'a}s, B{\'e}la},
 year={2001}, collection={Cambridge Studies in Advanced Mathematics}
}

@article{Ding2023,
  title={The planted matching problem: Sharp threshold and infinite-order phase transition},
  author={Ding, Jian and Wu, Yihong and Xu, Jiaming and Yang, Dana},
  journal={Probability Theory and Related Fields},
  pages={1--71},
  year={2023},
  publisher={Springer}
}

@article{gaudio2025finding,
  title={Finding Planted Cycles in a Random Graph},
  author={Gaudio, Julia and Sandon, Colin and Xu, Jiaming and Yang, Dana},
  journal={arXiv preprint arXiv:2511.04058},
  year={2025}
}

\end{document}